\documentclass[11pt]{article}


\usepackage{mathrsfs,amssymb,amsmath,amsfonts}
\usepackage{amsthm}
\usepackage{bbm}
\usepackage{tipa}
\usepackage{enumerate}
\usepackage{color}
\usepackage[titletoc,title]{appendix}
\usepackage{cite}
\usepackage[hyphens]{url}
\usepackage[bookmarks=false,hidelinks]{hyperref}

\usepackage{arydshln}

\topmargin=0pt \pagestyle{plain} \raggedbottom 
\topmargin=-2.25cm 
\oddsidemargin=0mm \textwidth 160mm \textheight 240mm

\newtheorem{theorem}{Theorem}[section]
\newtheorem{lemma}[theorem]{Lemma}
\newtheorem{definition}[theorem]{Definition}
\newtheorem{proposition}[theorem]{Proposition}
\newtheorem{corollary}[theorem]{Corollary}
\newtheorem{remark}[theorem]{Remark}

\theoremstyle{definition}
\newtheorem{example}[theorem]{Example}

\newenvironment{keywords}{{\bf Key words: }}{}
\newenvironment{AMS}{{\bf AMS subject classification: }}{}

\newcommand{\esssup}{\mathop{\mathrm{esssup}}}

\numberwithin{equation}{section}

\newcommand{\udots}{\mathinner{\mskip1mu\raise1pt\vbox{\kern7pt\hbox{.}}
\mskip2mu\raise4pt\hbox{.}\mskip2mu\raise7pt\hbox{.}\mskip1mu}}


\begin{document}

\title{Exact Controllability for Mean-Field Type Linear Game-Based Control Systems
\footnote{This work is supported in part by the National Natural Science
Foundation of China (12271304).}}

\author{Cui Chen and Zhiyong Yu \footnote{School of Mathematics, Shandong University, Jinan 250100, China. E-mails: {\tt chencuiyde@mail.sdu.edu.cn} (C. Chen), {\tt yuzhiyong@sdu.edu.cn} (Z. Yu). The corresponding author is Zhiyong Yu.}  }

\maketitle

\begin{abstract}
Motivated by the self-pursuit of controlled objects, we consider the exact controllability of a linear mean-field type game-based control system (MF-GBCS, for short) generated by a linear-quadratic (LQ, for short) Nash game. A Gram-type criterion for the general time-varying coefficients case and a Kalman-type criterion for the special time-invariant coefficients case are obtained. At the same time, the equivalence between the exact controllability of this MF-GBCS and the exact observability of a dual system is established. Moreover, an admissible control that can steer the state from any initial vector to any terminal random variable is constructed in closed form. 
\end{abstract}

\begin{keywords}
forward-backward stochastic differential equation, stochastic linear-quadratic problem, game-based control system, exact controllability, Nash equilibrium
\end{keywords}\\

\begin{AMS}
60H10, 49N10, 93B05
\end{AMS}

\section{Introduction}

Recently, Zhang and Guo \cite{ZhG-19AC, ZhG-19SICON, ZhG-21SICON} proposed a new kind of control systems, named the game-based control systems (GBCSs, for short). This kind of systems go beyond the basic assumption in the traditional control theory that the controlled objects have no intelligence and self-pursuit, therefore GBCSs can be effectively used to model various controlled intelligent systems in the fields of economy, finance, life science, artificial intelligence and so on (see \cite{ChWW-15, LCGCB-19, LMYG-17, MDJ-18, MS-15, Pin-77, vEP-02, ZhM-13} for example). Specifically, the controlled objects in this situation, such as humans, creatures, intelligent devices, etc., are called {\it agents}. The self-pursuit of each agent is shown in the mathematical model as trying to minimize her/his own cost functional (or maximize a certain payoff). Then, the agents form a non-cooperative Nash-type game. Besides, there is also a controller which is called a {\it regulator} in this paper. Due to the self-pursuit of the agents, the controlled system faced by the regulator is essentially changed from the original one used to describe the evolution of things to the Hamiltonian system generated by agents' game. After removing the initial condition, this controlled Hamiltonian system is called a GBCS. Facing GBCSs, the control aims of the regulator includes optimality (see \cite{MX-15, Mu-18, WWZh-20} for example), controllability (see \cite{LY-22+, ZhG-19AC, ZhG-19SICON}), stability (see \cite{ZhG-21SICON}) and so on. In the present paper, we consider the exact controllability of a linear MF-GBCS, which can be regarded as a continuation of \cite{LY-22+, ZhG-19AC, ZhG-19SICON}.

The research on mean-field type stochastic controls and mean-field type stochastic differential games led by Huang et al. \cite{HMC-06} and Lasry and Lions \cite{LL-07} has gradually reached an upsurge since 2006 and has continued to this day. Specifically, the mean-field items in this paper refer to the mathematical expectation items appearing in the state equation (see \eqref{2:Sys}) and the cost functionals (see \eqref{2:Cost}). On the one hand, as the state equation, this kind of stochastic differential equations (SDEs, for short) with mathematical expectation items are often called McKean-Vlasov SDEs or mean-field type SDEs (MF-SDEs, for short). A remarkable feature of MF-SDEs is that they are widely used to effectively describe the dynamic evolution of particle systems or large population systems with mean-field interactions. On the other hand, the mean-field items in the cost functionals can effectively characterize the subjective attitude of agents towards risk. For example, the mean-field items appearing in the cost functionals of Example \ref{2.3:Example} are in the form of variances:
\[
\bar\alpha_i \mbox{Var}\ [x(T)] = \bar\alpha_i \mathbb E\Big[ \big| x(T) -\mathbb E[x(T)] \big|^2 \Big], \quad
\bar\beta_i \int_0^T \mbox{Var}\ [x(s)]\, \mathrm ds = \bar\beta_i \mathbb E\int_0^T \big| x(s) -\mathbb E[x(s)] \big|^2\, \mathrm ds
\]
($i=1,2$). When $\bar\alpha_i>0$ and $\bar\beta_i>0$, the above variances express Agent $i$'s risk aversion that she/he hopes the state is not too sensitive to the changes of random scenarios. On the contrary, when $\bar\alpha_i<0$ and $\bar\beta_i<0$, the above variances express Agent $i$'s risk seeking, i.e. the psychology of risk takers. Compared with the abundant research on the optimality of mean-field systems in the literature (see \cite{BDL-11, LLY-20, PW-18, Yo-13} for example), the research on the controllability is extremely rare (see \cite{Gor-14, Y-21}). The research on the exact controllability of the linear MF-GBCS in the present paper will enrich the direction of controllability.

Controllability is one of the core research issues in the control theory. Relatively speaking, the research on the controllability of deterministic systems is mature and fruitful. For surveys of ordinary differential equation (ODE, for short) systems and partial differential equation (PDE, for short) systems, one can refer to Lee and Markus \cite{LM-67} and Coron \cite{Coron-07} respectively. However, the study of controllability of stochastic systems is rare. One important reason for this phenomenon is as follows. The terminal state in a deterministic system is located in a finite dimensional vector space, namely $x_T \in \mathbb R^n$. However, for a stochastic system, the terminal state is located in an infinite dimensional space of random variables, such as $x_T \in L^2_{\mathcal F_T}(\Omega;\mathbb R^n)$ (the rigorous definition will be given in Section \ref{SEC:Form}). Then, in the research of controllability of stochastic systems, there is an essential difficulty caused by infinite dimension of space. In 1994, with the help of the theory of backward stochastic differential equations (BSDEs, for short, see \cite{PP-90}), Peng \cite{P-94} overcame this difficulty for the first time and obtained a Kalman-type criterion for the exact controllability of linear SDE systems with time-invariant coefficients. This result was improved by L\"u and Zhang \cite{LZh-BOOK} from Peng's Kalman matrix with infinite number of columns to a sub-matrix with finite number of columns. Along this line, we continue to mention several results which are closely related to the present paper and are at the core of the exact controllability theory. In 2010, Liu and Peng \cite{LP-10} obtained a Gram-type criterion for the exact controllability of linear SDE systems with time-varying deterministic coefficients. This result was improved by Wang et al. \cite{WYYY-17} to the general case of time-varying random coefficients. In addition, a dual relationship between the exact controllability of an SDE system and the exact observability of its dual system was rigorously established in \cite{WYYY-17}. Moreover, these results of SDE systems were successfully extended to the new kind of linear stochastic GBCSs by Zhang and Guo \cite{ZhG-19SICON} and Liu and Yu \cite{LY-22+}. With the help of the theory of mean-field type BSDEs (MF-BSDEs, for short, see \cite{BDLP-09} for example) and some delicate analysis of mean-field dependence, Yu \cite{Y-21} obtained a Gram-type criterion and a Kalman-type criterion for the exact controllability of linear MF-SDE systems. The present paper can be regarded as the development and enrichment of \cite{Y-21} in MF-GBCSs.

The novelties and contributions of this paper are summarized as follows.
\begin{enumerate}[(i).]
\item Comparing with the previous literature on GBCSs (see \cite{LY-22+, ZhG-19AC, ZhG-19SICON, ZhG-21SICON} for example), we introduce a concept of {\it admissible strategy} (see Definition \ref{2.1D:Str}) to more accurately characterize the hierarchical relationship between the regulator and the agents, that is, the selection of the admissible controls of agents changes with the different admissible control announced by the regulator.

\item For the core difficulties commonly existing in stochastic differential games: the existence and uniqueness of Nash equilibria, firstly, we use some delicate variational analysis to equate them to the existence and uniqueness of a mean-field type forward-backward SDE (MF-FBSDE, for short, see \eqref{2:MF-FBSDE}). Secondly, we adopt a definition of exact controllability of the MF-GBCS (see Definition \ref{2.2:Def}) which is weaker than that in \cite{ZhG-19SICON} but is similar to that in \cite{LY-22+}. On the one hand, we avoid the uniqueness of MF-FBSDE \eqref{2:MF-FBSDE} which is more challenging than the existence, so that the definition of weak form and the conclusions obtained have a wider range of applications. On the other hand, the existence of MF-FBSDE \eqref{2:MF-FBSDE} is implied as a natural deduction of exact controllability, which provides an alternative way for the study of Nash equilibria (see Remark \ref{2.2:Rem}).

\item For the difficulty that the dimension of terminal state space is infinite, we combine and extend the idea and method in \cite{P-94, LY-22+, WYYY-17, Yo-13, Y-21} to the MF-GBCSs we care about (see \eqref{2:MF-GBCS}). Under some suitable assumptions (see Assumption (H4) in Section \ref{SEC:Exact}), we equivalently transform the exact controllability of MF-GBCS \eqref{2:MF-GBCS} to the {\it exact null-controllability} of a backward system (see \eqref{3:BS}). In other words, the terminal state $x_T$ degenerates from any point in the infinite dimensional space $L^2_{\mathcal F_T}(\Omega;\mathbb R^n)$ that needs be controlled to the zero point of the space $L^2_{\mathcal F_T}(\Omega;\mathbb R^n)$. This makes the difficulty of infinite dimension overcome.

\item For the exact controllability of MF-GBCS \eqref{2:MF-GBCS}, we obtain sufficient and necessary Gram-type criterion (see Theorem \ref{Gram-Cri}) and Kalman-type criterion (see Theorem \ref{Kalman-Cri}). Moreover, we establish the equivalence between the exact controllability of MF-GBCS \eqref{2:MF-GBCS} and the {\it exact observability} of a dual system (see System \eqref{3.4:DS} and \eqref{3.4:K*} and Theorem \ref{3.4T:ECEO}), which provides an indirect research approach of controllability through observability. Furthermore, we construct an admissible control in closed form for the regulator that can steer the state process $x(\cdot)$ from any existing initial value $x_0\in \mathbb R^n$ to any desired terminal target $x_T \in L^2_{\mathcal F_T}(\Omega; \mathbb R^n)$ (see \eqref{5:Su0*} and Theorem \ref{4:THM}).
\end{enumerate}

The rest of this paper is organized as follows. In Section \ref{SEC:Form},  we establish the equivalence between the existence and uniqueness of agents' LQ Nash equilibria and that of linear MF-FBSDE \eqref{2:MF-FBSDE}. Then, the formulation of the exact controllability of MF-GBCS \eqref{2:MF-GBCS} faced by the regulator is given. In Section \ref{SEC:Exact}, we establish the equivalence between the exact controllability of MF-GBCS \eqref{2:MF-GBCS} and the exact null-controllability of the backward system \eqref{3:BS}. Then, for the exact controllability of MF-GBCS \eqref{2:MF-GBCS}, a Gram-type criterion for the general time-varying coefficients case and a Kalman-type criterion for the special time-invariant coefficients case are obtained. An indirect approach to judge the exact controllability through the exact observability of the dual system \eqref{3.4:DS} and \eqref{3.4:K*} is also provided. In Section \ref{SEC:Steer}, an admissible control of the regulator that can steer the state $x(\cdot)$ from any initial value $x_0$ to any terminal value $x_T$ is constructed. In Section \ref{SEC:Sol}, we summarize the results of Sections \ref{SEC:Form}, \ref{SEC:Exact} and \ref{SEC:Steer} to give a complete solution to the two-layer problem consisting of exact controllability and Nash game.

\section{Problem formulation}\label{SEC:Form}

Let $\mathbb R^n$ be the $n$-dimensional Euclidean space equipped with the Euclidean inner product $\langle \cdot,\ \cdot \rangle$ and the induced norm $|\cdot|$. Let $\mathbb R^{m\times n}$ be the collection of all $(m\times n)$ matrices and $\mathbb S^n$ consist of all $(n\times n)$ symmetrical matrices. Clearly, both $\mathbb R^{m\times n}$ and $\mathbb S^n$ are Euclidean spaces.

Let $(\Omega, \mathcal F, \mathbb F, \mathbb P)$ be a complete filtered probability space on which is defined a one-dimensional Brownian motion $W(\cdot)$. In this paper, the dimension of Brownian motion is set to be $d=1$ for the convenience of notation. The case of $d>1$ can be studied similarly. Let $T>0$ be a fixed time horizon, $\mathbb F =\{ \mathcal F_t,\ 0\leq t\leq T \}$ be the natural filtration generated by $W(\cdot)$ and augmented by all $\mathbb P$-null sets, and $\mathcal F =\mathcal F_T$.

We continue to introduce some Banach (sometimes more accurately, Hilbert) spaces consisting of random variables, deterministic functions or stochastic processes which will be used in this paper. 

\begin{itemize}
\item $L^2_{\mathcal F_T}(\Omega;\mathbb R^n)$ is the set of $\mathcal F_T$-measurable random variables $\varphi:\Omega\rightarrow \mathbb R^n$ such that
\[
\Vert \varphi \Vert_{L^2_{\mathcal F_T}(\Omega;\mathbb R^n)} := \Big\{ \mathbb E \big[ |\varphi|^2 \big] \Big\}^{1/2} <\infty.
\]

\item $L^\infty(0,T;\mathbb R^n)$ is the set of Lebesgue measurable functions $\varphi:[0,T] \rightarrow \mathbb R^n$ such that
\[
\Vert \varphi(\cdot) \Vert_{L^\infty(0,T;\mathbb R^n)} := \esssup_{s\in [0,T]} |\varphi(s)| <\infty.
\]

\item $L^2_{\mathbb F}(0,T;\mathbb R^n)$ is the set of $\mathbb F$-progressively measurable stochastic processes $\varphi: [0,T] \times \Omega \rightarrow \mathbb R^n$ such that
\[
\Vert \varphi(\cdot) \Vert_{L^2_{\mathbb F}(0,T;\mathbb R^n)} := \bigg\{ \mathbb E\int_0^T |\varphi(s)|^2\, \mathrm ds\bigg\}^{1/2} <\infty.
\]

\item $L^2_{\mathbb F}(\Omega;C(0,T;\mathbb R^n))$ is the set of $\mathbb F$-progressively measurable stochastic processes $\varphi: [0,T] \times \Omega \rightarrow \mathbb R^n$ such that for almost all $\omega \in \Omega$, $s \mapsto \varphi(s,\omega)$ is continuous, and
\[
\Vert \varphi(\cdot) \Vert_{L^2_{\mathbb F}(\Omega;C(0,T;\mathbb R^n))} := \bigg\{ \mathbb E\bigg[ \sup_{s\in [0,T]} |\varphi(s)|^2 \bigg] \bigg\}^{1/2} <\infty.
\]
\end{itemize}

In this paper, we are interested in a mean-field type two-layer controlled system, in which the lower layer is composed of $N$ ($\geq 1$) agents, while the upper layer has only one regulator. We assume that the dynamic of the system is given by the following linear MF-SDE:
\begin{equation}\label{2:Sys}
\left\{
\begin{aligned}
& \mathrm dx(s) = \Big\{ A(s)x(s) +\bar A(s) \mathbb E[x(s)] +B(s)u(s) +\bar B(s) \mathbb E[u(s)] \Big\}\, \mathrm ds\\
& \quad +\Big\{ C(s)x(s) +\bar C(s) \mathbb E[x(s)] +D(s)u(s) +\bar D(s) \mathbb E[u(s)] \Big\}\, \mathrm dW(s),\quad s\in [0,T],\\
& x(0) =x_0,
\end{aligned}
\right.
\end{equation}
where $A(\cdot)$, $\bar A(\cdot)$, $C(\cdot)$, $\bar C(\cdot) \in L^\infty (0,T;\mathbb R^{n\times n})$, $B(\cdot)$, $\bar B(\cdot)$, $D(\cdot)$, $\bar D(\cdot) \in L^\infty(0,T;\mathbb R^{n\times m})$ and $x_0 \in \mathbb R^n$. Here, $u(\cdot)\in L^2_{\mathbb F}(0,T;\mathbb R^m)$ is a concise representation of the admissible control processes of the regulator and the agents. In fact, it admits a decomposition:
\begin{equation}\label{2:N1}
u(\cdot) = \begin{pmatrix} u_0(\cdot)^\top & u_1(\cdot)^\top & \cdots & u_N(\cdot)^\top \end{pmatrix}^\top =: \begin{pmatrix} u_0(\cdot)^\top & u_{-0}(\cdot)^\top \end{pmatrix}^\top,
\end{equation}
where $u_0(\cdot) \in L^2_{\mathbb F}(0,T;\mathbb R^{m_0})$ is the admissible control of the regulator and $u_i(\cdot) \in L^2_{\mathbb F}(0,T;\mathbb R^{m_i})$ is the admissible control of Agent $i$ ($i=1,2,\dots,N$). Here and hereafter, the superscript ``$\top$'' denotes the transpose of a vector or a matrix. Obviously, $m = \sum_{i=0}^N m_i =: m_0 +m_{-0}$. For convenience of later use, the notation $u_{-0}(\cdot) \in L^2_{\mathbb F}(0,T;\mathbb R^{m_{-0}})$ is also introduced to denote an $N$-tuple of admissible controls of all agents (see \eqref{2:N1}). Moreover, the related coefficients of MF-SDE \eqref{2:Sys} admit the  corresponding decompositions:
\begin{equation}\label{2:N2}
\begin{aligned}
& B(\cdot) = \begin{pmatrix} B_0(\cdot) & B_1(\cdot) & \cdots & B_N(\cdot)  \end{pmatrix} =: \begin{pmatrix} B_0(\cdot) & B_{-0}(\cdot) \end{pmatrix},\\
& \bar B(\cdot) = \begin{pmatrix} \bar B_0(\cdot) & \bar B_1(\cdot) & \cdots & \bar B_N(\cdot)  \end{pmatrix} =: \begin{pmatrix} \bar B_0(\cdot) & \bar B_{-0}(\cdot) \end{pmatrix},\\
& D(\cdot) = \begin{pmatrix} D_0(\cdot) & D_1(\cdot) & \cdots & D_N(\cdot)  \end{pmatrix} =: \begin{pmatrix} D_0(\cdot) & D_{-0}(\cdot) \end{pmatrix},\\
& \bar D(\cdot) = \begin{pmatrix} \bar D_0(\cdot) & \bar D_1(\cdot) & \cdots & \bar D_N(\cdot)  \end{pmatrix} =: \begin{pmatrix} \bar D_0(\cdot) & \bar D_{-0}(\cdot) \end{pmatrix},
\end{aligned}
\end{equation}
where $B_i(\cdot)$, $\bar B_i(\cdot)$, $D_i(\cdot)$, $\bar D_i(\cdot) \in L^\infty(0,T;\mathbb R^{n\times m_i})$ ($i=1,2,\dots,N$). By the theory of MF-SDEs (see Yong \cite[Proposition 2.6]{Yo-13} for example), for any $x_0\in \mathbb R^n$ and any $u(\cdot) \in L^2_{\mathbb F}(0,T;\mathbb R^m)$, \eqref{2:Sys} admits a unique solution $x(\cdot) \in L^2_{\mathbb F}(\Omega;C(0,T;\mathbb R^n))$ which is called the state process at $(x_0, u(\cdot))$.

\subsection{LQ Nash game at the lower layer}

In our two-layer model, the agents at the lower layer always play an LQ Nash-type nonzero-sum stochastic differential game after an admissible control of the regulator at the upper layer is given. In detail, for any given $(x_0, u_0(\cdot)) \in \mathbb R^n \times L^2_{\mathbb F}(0,T;\mathbb R^{m_0})$, Agent $i$ is set to minimize the following quadratic cost functional:
\begin{equation}\label{2:Cost}
\begin{aligned}
& J_i\big( u_{-0}(\cdot); x_0, u_0(\cdot) \big) = \mathbb E \bigg\{ \big\langle H_ix(T),\ x(T) \big\rangle +\int_0^T \Big[ \big\langle Q_i(s)x(s),\ x(s) \big\rangle +\big\langle R_i(s)u_i(s),\ u_i(s) \big\rangle \Big]\, \mathrm ds \bigg\}\\
& \qquad +\big\langle \bar H_i \mathbb E [x(T)],\ \mathbb E[x(T)] \big\rangle +\int_0^T \Big[ \big\langle \bar Q_i(s) \mathbb E[x(s)],\ \mathbb E[x(s)] \big\rangle +\big\langle \bar R_i(s)\mathbb E[u_i(s)],\ \mathbb E[u_i(s)] \big\rangle \Big]\, \mathrm ds,
\end{aligned}
\end{equation}
where $H_i$, $\bar H_i \in \mathbb S^n$, $Q_i(\cdot)$, $\bar Q_i(\cdot) \in L^\infty (0,T;\mathbb S^n)$ and $R_i(\cdot)$, $\bar R_i(\cdot) \in L^\infty(0,T;\mathbb S^{m_i})$, by choosing her/his admissible control $u_i(\cdot) \in L^2_{\mathbb F}(0,T;\mathbb R^{m_i})$ ($i=1,2,\dots,N$). Clearly, the cost functionals \eqref{2:Cost} are well-defined, i.e., for any $(x_0, u_0(\cdot))\in \mathbb R^n \times L^2_{\mathbb F}(0,T;\mathbb R^{m_0})$, any $u_{-0}(\cdot) \in L^2_{\mathbb F}(0,T;\mathbb R^{m_{-0}})$ and any $i=1,2,\dots, N$, we have $-\infty < J_i(u_{-0}(\cdot); x_0, u_0(\cdot)) <+\infty$.

In general, the selection of agents' admissible controls will be reasonably changed according to the different regulator's admissible control given in advance. We introduce the following concept of admissible strategy to emphasize this selection dependency.

\begin{definition}\label{2.1D:Str}
An $N$-tuple of {\rm admissible strategies} of agents is a mapping 
\[
\mathbbm u_{-0} = \begin{pmatrix} \mathbbm u_1^\top & \mathbbm u_2^\top & \cdots & \mathbbm u_N^\top \end{pmatrix}^\top : \mathbb R^n \times L^2_{\mathbb F}(0,T;\mathbb R^{m_0}) \rightarrow L^2_{\mathbb F}(0,T;\mathbb R^{m_{-0}}).
\]
All admissible strategies of agents are collected in $\mathscr U_{-0}$.
\end{definition}

Now, we formulate the LQ Nash game among the agents.

\medskip

\noindent {\bf Problem (LQG).} To find an $N$-tuple of admissible strategies $\mathbbm u_{-0}^*(\cdot) \in \mathscr U_{-0}$ of agents such that, for any $(x_0, u_0(\cdot)) \in \mathbb R^n \times L^2_{\mathbb F}(0,T;\mathbb R^{m_0})$,
\begin{equation}\label{2:Nash}
J_i \big( u^*_{-0}(\cdot); x_0, u_0(\cdot) \big) = \inf_{u_i(\cdot) \in L^2_{\mathbb F}(0,T;\mathbb R^{m_i})} J\big( u_i(\cdot), u^*_{-\{0,i\}}(\cdot); x_0,u_0(\cdot) \big),\quad i=1,2,\dots,N,
\end{equation}
where $u^*_{-0}(\cdot) = \mathbbm u_{-0}^*(x_0, u_0(\cdot))$ is the {\it outcome} of the strategy $\mathbbm u_{-0}^*(\cdot)$, and $u^*_{-\{0,i\}}(\cdot)$ represents all the components of $u^*_{-0}(\cdot)$ other than $u_i(\cdot)$. The admissible strategy satisfying \eqref{2:Nash} is called a {\it Nash equilibrium strategy} of the agents, and the outcome $u^*_{-0}(\cdot) = \mathbbm u_{-0}^*(x_0, u_0(\cdot))$ is called a {\it Nash equilibrium point} of the agents at $(x_0, u_0(\cdot))$.

\medskip

In order to solve Problem (LQG), we introduce a family of mean-field type Hamiltonian systems parameterized by $(x_0, u_0(\cdot)) \in \mathbb R^n \times L^2_{\mathbb F}(0,T;\mathbb R^{m_0})$ (the argument $s$ is suppressed for simplicity):
\begin{equation}\label{2:Ham}
\left\{
\begin{aligned}
& 0 = Ru_{-0}^* +\bar R\mathbb E[u_{-0}^*] +\widetilde B_{-0}^\top y_{-0} +\bar{\widetilde B}_{-0}^\top \mathbb E[y_{-0}] +\widetilde D_{-0}^\top z_{-0} +\bar{\widetilde D}_{-0}^\top \mathbb E[z_{-0}],\\
& \mathrm dx^* = \Big\{ Ax^* +\bar A\mathbb E[x^*] +B_{-0}u_{-0}^* +\bar B_{-0} \mathbb E[u_{-0}^*] +B_0 u_0 +\bar B_0 \mathbb E[u_0] \Big\}\, \mathrm ds\\
& \qquad +\Big\{ Cx^* +\bar C\mathbb E[x^*] +D_{-0}u_{-0}^* +\bar D_{-0} \mathbb E[u_{-0}^*] +D_0 u_0 +\bar D_0 \mathbb E[u_0] \Big\}\, \mathrm dW,\\
& \mathrm dy_{-0} = -\Big\{ \widetilde A^\top y_{-0} +\bar {\widetilde A}^\top \mathbb E[y_{-0}] +\widetilde C^\top z_{-0} +\bar{\widetilde C}^\top \mathbb E[z_{-0}] +Qx^* +\bar Q \mathbb E[x^*] \Big\}\, \mathrm ds +z_{-0}\, \mathrm dW,\\
& x^*(0) =x_0, \qquad y_{-0}(T) = Hx^*(T) +\bar H \mathbb E[x^*(T)],
\end{aligned}
\right.
\end{equation}
where we use the notations \eqref{2:N1}, \eqref{2:N2} and
\begin{equation}\label{2:N3}
\begin{aligned}
& \widetilde A(\cdot) = \mbox{diag } \big\{ \overbrace{ A(\cdot),\, A(\cdot),\, \dots,\, A(\cdot)}^{\mbox{the number is } N} \big\}, \  && \bar {\widetilde A}(\cdot) = \mbox{diag } \big\{ \overbrace{ \bar A(\cdot),\, \bar A(\cdot),\, \dots,\, \bar A(\cdot)}^{\mbox{the number is } N} \big\},\\
& \widetilde C(\cdot) = \mbox{diag } \big\{ \overbrace{ C(\cdot),\, C(\cdot),\, \dots,\, C(\cdot)}^{\mbox{the number is } N} \big\}, \  && \bar {\widetilde C}(\cdot) = \mbox{diag } \big\{ \overbrace{ \bar C(\cdot),\, \bar C(\cdot),\, \dots,\, \bar C(\cdot)}^{\mbox{the number is } N} \big\},\\
& \widetilde B_{-0}(\cdot) = \mbox{diag } \big\{ B_1(\cdot),\, B_2(\cdot),\, \dots,\, B_N(\cdot) \big\},\  && \bar {\widetilde B}_{-0}(\cdot) = \mbox{diag } \big\{ \bar B_1(\cdot),\, \bar B_2(\cdot),\, \dots,\, \bar B_N(\cdot) \big\},\\
& \widetilde D_{-0}(\cdot) = \mbox{diag } \big\{ D_1(\cdot),\, D_2(\cdot),\, \dots,\, D_N(\cdot) \big\},\  && \bar {\widetilde D}_{-0}(\cdot) = \mbox{diag } \big\{ \bar D_1(\cdot),\, \bar D_2(\cdot),\, \dots,\, \bar D_N(\cdot) \big\},\\
& R(\cdot) = \mbox{diag } \big\{ R_1(\cdot),\, R_2(\cdot),\, \dots,\, R_N(\cdot) \big\}, \ && \bar R(\cdot) = \mbox{diag } \big\{ \bar R_1(\cdot),\, \bar R_2(\cdot),\, \dots,\, \bar R_N(\cdot) \big\},\\
& Q(\cdot) = \begin{pmatrix} Q_1(\cdot)^\top & Q_2(\cdot)^\top & \cdots & Q_N(\cdot)^\top \end{pmatrix}^\top,\ && \bar Q(\cdot) = \begin{pmatrix} \bar Q_1(\cdot)^\top & \bar Q_2(\cdot)^\top & \cdots & \bar Q_N(\cdot)^\top \end{pmatrix}^\top,\\
& H = \begin{pmatrix} H_1^\top & H_2^\top & \cdots & H_N^\top \end{pmatrix}^\top, \ && \bar H = \begin{pmatrix} \bar H_1^\top & \bar H_2^\top & \cdots & \bar H_N^\top \end{pmatrix}^\top.
\end{aligned}
\end{equation}
We notice that there are four unknown processes $u_{-0}^*(\cdot)$, $x^*(\cdot)$, $y_{-0}(\cdot)$ and $z_{-0}(\cdot)$ in the Hamiltonian system \eqref{2:Ham}. For the convenience of later use, on the one hand, corresponding to \eqref{2:N3}, we introduce the decomposition:
\begin{equation}\label{2:N4}
y_{-0}(\cdot) = \begin{pmatrix} y_1(\cdot)^\top & y_2(\cdot)^\top & \cdots & y_N(\cdot)^\top \end{pmatrix}^\top,\quad z_{-0}(\cdot)= \begin{pmatrix} z_1(\cdot)^\top & z_2(\cdot)^\top & \cdots & z_N(\cdot)^\top \end{pmatrix}^\top, 
\end{equation}
where $y_i(\cdot)$ and $z_i(\cdot)$ ($i=1,2,\dots,N$) take values in $\mathbb R^n$; on the other hand, we introduce the following more concise notations: 
\begin{equation}\label{2:N5}
\begin{aligned}
& \theta(\cdot) = \begin{pmatrix} x^*(\cdot)^\top & y_{-0}(\cdot)^\top & z_{-0}(\cdot)^\top \end{pmatrix}^\top, \\
& \Theta = L^2_{\mathbb F}(\Omega; C(0,T;\mathbb R^n)) \times L^2_{\mathbb F}(\Omega; C(0,T;\mathbb R^{Nn})) \times L^2_{\mathbb F}(0,T;\mathbb R^{Nn}).
\end{aligned}
\end{equation}
As usual, a pair of processes $(u_{-0}^*(\cdot), \theta(\cdot)) \in L^2_{\mathbb F}(0,T;\mathbb R^{m_{-0}}) \times \Theta$ is called a solution to the Hamiltonian system \eqref{2:Ham} if it makes all the equations in \eqref{2:Ham} hold.

Next, we will link the solvability of Problem (LQG) with that of Hamiltonian system \eqref{2:Ham}. As usual, for a symmetrical matrix $A \in \mathbb S^n$, we denote $A\geq 0$ (resp. $>0$, $\leq 0$, $<0$) if $A$ is positive semi-definite (resp. positive definite, negative semi-definite, negative definite). For a function $A(\cdot): [0,T] \rightarrow \mathbb S^n$, we denote $A(\cdot) \geq 0$ (resp. $>0$, $\leq 0$, $<0$) if $A(s)\geq 0$ (resp. $>0$, $\leq 0$, $<0$) for almost all $s\in [0,T]$. Moreover, we denote $A(s)\gg 0$ (resp. $\ll 0$) if there exists a constant $\delta>0$ such that $A(\cdot) -\delta I_n \geq 0$ (resp. $A(\cdot) +\delta I_n \leq 0$). We introduce the following assumptions on the weight matrices of cost functionals which are standard in the LQ game theory:

\medskip

\noindent{\bf Assumption (H1).} $H_i$, $H_i +\bar H_i$, $Q_i(\cdot)$, $Q_i(\cdot) +\bar Q_i(\cdot) \geq 0$ and $R_i(\cdot)$, $R_i(\cdot) +\bar R_i(\cdot) \gg 0$ for any $i=1,2,\dots,N$.

\medskip

Now we are in the position to give the main result of this subsection:

\begin{theorem}
Let Assumption (H1) hold. Let $(x_0, u_0(\cdot)) \in \mathbb R^n \times L^2_{\mathbb F}(0,T;\mathbb R^{m_0})$ be given. Then, the existence and the uniqueness of Nash equilibrium point of Problem (LQG) at $(x_0, u_0(\cdot))$ are equivalent to that of solution of Hamiltonian system \eqref{2:Ham}, respectively. In this case, if $(u_{-0}^*(\cdot),\theta(\cdot)) \in L^2_{\mathbb F}(0,T;\mathbb R^{m_{-0}}) \times \Theta$ is a solution to \eqref{2:Ham}, then $u_{-0}^*(\cdot)$ provides a Nash equilibrium point of Problem (LQG) at $(x_0,u_0(\cdot))$.
\end{theorem}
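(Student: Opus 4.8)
The plan is to fix $(x_0,u_0(\cdot)) \in \mathbb R^n \times L^2_{\mathbb F}(0,T;\mathbb R^{m_0})$ once and for all and to set up a bijection between the set of Nash equilibrium points of Problem (LQG) at $(x_0,u_0(\cdot))$ and the set of solutions of the Hamiltonian system \eqref{2:Ham}. Once such a bijection is available, the equivalence of existence and the equivalence of uniqueness follow at once, and so does the final assertion of the theorem.

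First I would treat one agent at a time. Fix $i \in \{1,\dots,N\}$ and an $N$-tuple $u^*_{-0}(\cdot) \in L^2_{\mathbb F}(0,T;\mathbb R^{m_{-0}})$, let $x^*(\cdot)$ be the state process at $\big(x_0,(u_0(\cdot),u^*_{-0}(\cdot))\big)$, and regard $J_i$ as a functional of $u_i(\cdot)$ alone, the other agents' controls $u^*_{-\{0,i\}}(\cdot)$ and the regulator's control $u_0(\cdot)$ being frozen. For a perturbation $u_i(\cdot) = u^*_i(\cdot) + \varepsilon v_i(\cdot)$ with $v_i(\cdot) \in L^2_{\mathbb F}(0,T;\mathbb R^{m_i})$, the state difference $\delta x(\cdot)$ solves the linear MF-SDE with zero initial value, drift $A\delta x + \bar A\,\mathbb E[\delta x] + \varepsilon(B_i v_i + \bar B_i\,\mathbb E[v_i])$ and diffusion $C\delta x + \bar C\,\mathbb E[\delta x] + \varepsilon(D_i v_i + \bar D_i\,\mathbb E[v_i])$, so that $\delta x(\cdot)$ depends linearly on $\varepsilon$ because \eqref{2:Sys} is affine in $(x,u)$. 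I would then introduce the adjoint pair $(y_i(\cdot),z_i(\cdot))$ as the unique solution in $L^2_{\mathbb F}(\Omega;C(0,T;\mathbb R^n)) \times L^2_{\mathbb F}(0,T;\mathbb R^n)$ of the linear MF-BSDE
\[
\mathrm dy_i = -\big\{ A^\top y_i + \bar A^\top\mathbb E[y_i] + C^\top z_i + \bar C^\top\mathbb E[z_i] + Q_i x^* + \bar Q_i\,\mathbb E[x^*] \big\}\,\mathrm ds + z_i\,\mathrm dW,\qquad y_i(T) = H_i x^*(T) + \bar H_i\,\mathbb E[x^*(T)],
\]
whose well-posedness is provided by the theory of linear MF-BSDEs. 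Applying It\^o's formula to $\langle y_i(\cdot),\delta x(\cdot)\rangle$, taking expectation, and repeatedly using the identity $\mathbb E\langle \bar M^\top\mathbb E[\xi],\eta\rangle = \langle \bar M^\top\mathbb E[\xi],\mathbb E[\eta]\rangle$ to dispose of the mean-field cross terms, I expect the first-order variation to reduce to
\[
\frac{1}{2}\frac{\mathrm d}{\mathrm d\varepsilon}\Big|_{\varepsilon=0} J_i\big(u^*_i + \varepsilon v_i, u^*_{-\{0,i\}}; x_0,u_0\big) = \mathbb E\int_0^T \big\langle R_i u^*_i + \bar R_i\,\mathbb E[u^*_i] + B_i^\top y_i + \bar B_i^\top\mathbb E[y_i] + D_i^\top z_i + \bar D_i^\top\mathbb E[z_i],\ v_i \big\rangle\,\mathrm ds .
\]
Since Assumption (H1) makes $u_i(\cdot) \mapsto J_i(u_i(\cdot),u^*_{-\{0,i\}}(\cdot);x_0,u_0(\cdot))$ a convex quadratic functional on $L^2_{\mathbb F}(0,T;\mathbb R^{m_i})$ (the associated quadratic form being nonnegative thanks to $H_i$, $H_i+\bar H_i$, $Q_i$, $Q_i+\bar Q_i \geq 0$ and $R_i$, $R_i+\bar R_i \gg 0$), the control $u^*_i(\cdot)$ minimizes it if and only if this variation vanishes for every $v_i(\cdot)$, that is, if and only if $R_i u^*_i + \bar R_i\mathbb E[u^*_i] + B_i^\top y_i + \bar B_i^\top\mathbb E[y_i] + D_i^\top z_i + \bar D_i^\top\mathbb E[z_i] = 0$ in $L^2_{\mathbb F}(0,T;\mathbb R^{m_i})$.

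It then remains to stack the $N$ relations. With $y_{-0}(\cdot)$, $z_{-0}(\cdot)$ as in \eqref{2:N4} and the block notations of \eqref{2:N3} and \eqref{2:N5}, the $N$ adjoint MF-BSDEs combine into the third equation of \eqref{2:Ham} together with $y_{-0}(T) = Hx^*(T)+\bar H\mathbb E[x^*(T)]$, the $N$ stationarity relations combine into the first equation of \eqref{2:Ham}, while $x^*(\cdot)$ and $(u_0(\cdot),u^*_{-0}(\cdot))$ satisfy the second equation by the choice of $x^*(\cdot)$. Hence $u^*_{-0}(\cdot)$ is a Nash equilibrium point at $(x_0,u_0(\cdot))$ if and only if there is a $\theta(\cdot) \in \Theta$ such that $(u^*_{-0}(\cdot),\theta(\cdot))$ solves \eqref{2:Ham}; and in that case $\theta(\cdot) = (x^*(\cdot)^\top,y_{-0}(\cdot)^\top,z_{-0}(\cdot)^\top)^\top$ is uniquely determined by $u^*_{-0}(\cdot)$, because $x^*(\cdot)$ is the unique solution of the forward MF-SDE \eqref{2:Sys} at $(x_0,(u_0,u^*_{-0}))$ and $(y_{-0}(\cdot),z_{-0}(\cdot))$ is the unique solution of the corresponding backward MF-BSDE. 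This yields a bijection $u^*_{-0}(\cdot) \longleftrightarrow (u^*_{-0}(\cdot),\theta(\cdot))$ between Nash equilibrium points at $(x_0,u_0(\cdot))$ and solutions of \eqref{2:Ham}, so existence of one is equivalent to existence of the other; and since both the map and its inverse are single-valued, uniqueness on one side is equivalent to uniqueness on the other. Finally, reading the bijection from right to left, if $(u^*_{-0}(\cdot),\theta(\cdot))$ solves \eqref{2:Ham} then $u^*_{-0}(\cdot)$ is a Nash equilibrium point at $(x_0,u_0(\cdot))$, which is the last claim.

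I expect the main obstacle to be the variational computation: correctly deriving the displayed first-order variation formula in the presence of the mean-field items, which demands a careful It\^o expansion of $\langle y_i,\delta x\rangle$ and disciplined handling of the expectation operator in every cross term involving $\bar A,\bar B_i,\bar C,\bar D_i,\bar Q_i,\bar H_i$ and $\bar R_i$. A secondary point requiring care is the verification that Assumption (H1) indeed makes each $J_i(\cdot,u^*_{-\{0,i\}};x_0,u_0)$ convex, so that the first-order condition is not only necessary but also sufficient for Agent $i$'s optimality; this is precisely what upgrades the one-sided implication ``Nash equilibrium point $\Rightarrow$ solution of \eqref{2:Ham}'' to a genuine equivalence.
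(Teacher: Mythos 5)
Your proposal is correct and follows essentially the same route as the paper: a variational perturbation of each agent's control, introduction of the adjoint MF-BSDE, an It\^o computation on the pairing of the adjoint process with the variational state to obtain the stationarity condition, sufficiency of that condition via the convexity supplied by Assumption (H1), and stacking the $N$ relations into \eqref{2:Ham}; your bijection argument for the equivalence of uniqueness is the same observation the paper makes in its Step 2 (that $\theta(\cdot)$ is determined from $u^*_{-0}(\cdot)$ by the well-posedness of the forward MF-SDE and the backward MF-BSDE). The only cosmetic difference is that the paper writes the cost increment explicitly as $2\varepsilon\,\mbox{IV}+\varepsilon^2\,\mbox{V}$ with $\mbox{V}\geq 0$ and invokes the arbitrariness of $\varepsilon$, rather than phrasing it as a first-order condition for a convex functional.
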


\begin{proof}
For clarity, we divide the whole proof into two steps.

{\bf Step 1.} Let $(x_0, u_0(\cdot)) \in \mathbb R^n \times L^2_{\mathbb F}(0,T;\mathbb R^{m_0})$ be given. Let $u_{-0}^*(\cdot) \in L^2_{\mathbb F}(0,T;\mathbb R^{m_{-0}})$ be an $N$-tuple of admissible controls of agents. By the definition of Nash equilibrium point (see \eqref{2:Nash}), $u_{-0}^*(\cdot)$ is a Nash equilibrium point if and only if
\begin{equation}\label{2.1T:E1}
\begin{aligned}
J_p\big( u_p^*(\cdot) +\varepsilon u_p(\cdot), u_{-\{0,p\}}^*(\cdot); x_0, u_0(\cdot) \big) -J_p\big( u_p^*(\cdot), u_{-\{0,p\}}^*(\cdot); x_0, u_0(\cdot) \big) \geq 0,\\
\mbox{for any } \varepsilon \in \mathbb R, \mbox{ any } u_p(\cdot) \in L^2_{\mathbb F}(0,T;\mathbb R^{m_p}) \mbox{ and any } p=1,2,\dots,N.
\end{aligned}
\end{equation}

Next, we will introduce the so-called variational equations to simplify \eqref{2.1T:E1}. Firstly, we denote by $x^*(\cdot) \in L^2_{\mathbb F}(\Omega;C(0,T;\mathbb R^n))$  the state process at $(x_0, (u_0(\cdot)^\top , u_{-0}^*(\cdot)^\top)^\top)$, i.e., $x^*(\cdot)$ is the unique solution to the following MF-SDE:
\begin{equation}\label{2.1T:Sys}
\left\{
\begin{aligned}
& \mathrm dx^* = \Big\{ Ax^* +\bar A\mathbb E[x^*] +B_{-0}u_{-0}^* +\bar B_{-0} \mathbb E[u_{-0}^*] +B_0 u_0 +\bar B_0 \mathbb E[u_0] \Big\}\, \mathrm ds\\
& \qquad +\Big\{ Cx^* +\bar C\mathbb E[x^*] +D_{-0}u_{-0}^* +\bar D_{-0} \mathbb E[u_{-0}^*] +D_0 u_0 +\bar D_0 \mathbb E[u_0] \Big\}\, \mathrm dW,\\
& x^*(0) =x_0.
\end{aligned}
\right.
\end{equation}
For any $p=1,2,\dots,N$, we introduce a couple of notations:
\begin{equation}
\begin{aligned}
& \alpha_p =\sum_{\substack{1\leq i\leq N\\ i\neq p}} \Big( B_iu_i^* +\bar B_i \mathbb E[u_i^*] \Big) +B_0u_0 +\bar B_0\mathbb E[u_0], \\
& \beta_p =\sum_{\substack{1\leq i\leq N\\ i\neq p}} \Big( D_iu_i^* +\bar D_i \mathbb E[u_i^*] \Big) +D_0u_0 +\bar D_0\mathbb E[u_0],
\end{aligned}
\end{equation}
and rewrite \eqref{2.1T:Sys} as
\begin{equation}\label{2.1T:Sys1}
\left\{
\begin{aligned}
& \mathrm dx^* = \Big\{ Ax^* +\bar A\mathbb E[x^*] +B_p u_p^* +\bar B_p \mathbb E[u_p^*] +\alpha_p \Big\}\, \mathrm ds\\
& \qquad +\Big\{ Cx^* +\bar C\mathbb E[x^*] +D_p u_p^* +\bar D_p \mathbb E[u_p^*] +\beta_p \Big\}\, \mathrm dW,\\
& x^*(0) =x_0.
\end{aligned}
\right.
\end{equation}
Secondly, for any $\varepsilon\in \mathbb R$ and any $u_p(\cdot) \in L^2_{\mathbb F}(0,T;\mathbb R^{m_p})$, we denote by $u^\varepsilon_p(\cdot) = u_p^*(\cdot) +\varepsilon u_p(\cdot)$ and $x^\varepsilon(\cdot)\in L^2_{\mathbb F}(\Omega;C(0,T;\mathbb R^n))$ the unique solution to the following MF-SDE: 
\begin{equation}\label{2.1T:Sys2}
\left\{
\begin{aligned}
& \mathrm dx^\varepsilon = \Big\{ Ax^\varepsilon +\bar A\mathbb E[x^\varepsilon] +B_p u_p^\varepsilon +\bar B_p \mathbb E[u_p^\varepsilon] +\alpha_p \Big\}\, \mathrm ds\\
& \qquad +\Big\{ Cx^\varepsilon +\bar C\mathbb E[x^\varepsilon] +D_p u_p^\varepsilon +\bar D_p \mathbb E[u_p^\varepsilon] +\beta_p \Big\}\, \mathrm dW,\\
& x^\varepsilon(0) =x_0.
\end{aligned}
\right.
\end{equation}
Thirdly, similar to the relationship $u_p(\cdot) = (u^\varepsilon_p(\cdot) -u^*_p(\cdot))/ \varepsilon$, we denote $x^1(\cdot) =(x^\varepsilon(\cdot) -x^*(\cdot))/\varepsilon \in L^2_{\mathbb F}(\Omega;C(0,T;\mathbb R^n))$. By the linearity of \eqref{2.1T:Sys1} and \eqref{2.1T:Sys2}, $x^1(\cdot)$ is the unique solution to the following variational equation:
\begin{equation}\label{2.1T:VE}
\left\{
\begin{aligned}
& \mathrm dx^1 = \Big\{ Ax^1 +\bar A\mathbb E[x^1] +B_pu_p +\bar B_p \mathbb E[u_p] \Big\}\, \mathrm ds\\
& \qquad +\Big\{ Cx^1 +\bar C\mathbb E[x^1] +D_pu_p +\bar D_p \mathbb E[u_p] \Big\}\, \mathrm dW,\\
& x^1(0) =0.
\end{aligned}
\right.
\end{equation}

With the above introduced notations, we calculate
\[
J_p\big( u_p^*(\cdot) +\varepsilon u_p(\cdot), u_{-\{0,p\}}^*(\cdot); x_0, u_0(\cdot) \big) -J_p\big( u_p^*(\cdot), u_{-\{0,p\}}^*(\cdot); x_0, u_0(\cdot) \big) = \mbox{I} +\int_0^T \Big[ \mbox{II}(s) +\mbox{III}(s) \Big]\, \mathrm ds,
\]
where
\[
\begin{aligned}
\mbox{I} =\ & \mathbb E \Big[ \big\langle H_p\big( x^*(T) +\varepsilon x^1(T) \big),\ x^*(T) +\varepsilon x^1(T) \big\rangle -\big\langle H_p x^*(T),\ x^*(T) \big\rangle \Big]\\
& +\big\langle \bar H_p \mathbb E \big[x^*(T) +\varepsilon x^1(T)\big],\ \mathbb E \big[x^*(T) +\varepsilon x^1(T)\big] \big\rangle -\big\langle \bar H_p \mathbb E[x^*(T)],\ \mathbb E[x^*(T)] \big\rangle,\\
\mbox{II}(s) =\ & \mathbb E \Big[ \big\langle Q_p(s) \big( x^*(s) +\varepsilon x^1(s) \big),\ x^*(s) +\varepsilon x^1(s) \big\rangle -\big\langle Q_p(s) x^*(s),\ x^*(s) \big\rangle \Big]\\
& +\big\langle \bar Q_p(s) \mathbb E \big[x^*(s) +\varepsilon x^1(s)\big],\ \mathbb E \big[x^*(s) +\varepsilon x^1(s)\big] \big\rangle -\big\langle \bar Q_p(s) \mathbb E[x^*(s)],\ \mathbb E[x^*(s)] \big\rangle,\\
\mbox{III}(s) =\ & \mathbb E \Big[ \big\langle R_p(s) \big( u_p^*(s) +\varepsilon u_p(s) \big),\ u_p^*(s) +\varepsilon u_p(s) \big\rangle -\big\langle R_p(s) u_p^*(s),\ u_p^*(s) \big\rangle \Big]\\
& +\big\langle \bar R_p(s) \mathbb E \big[u_p^*(s) +\varepsilon u_p(s)\big],\ \mathbb E \big[u_p^*(s) +\varepsilon u_p(s)\big] \big\rangle -\big\langle \bar Q_p(s) \mathbb E[u_p^*(s)],\ \mathbb E[u_p^*(s)] \big\rangle.
\end{aligned}
\]
We continue to calculate
\[
\begin{aligned}
\mbox{I} =\ & 2\varepsilon \Big\{ \mathbb E \big[ \big\langle H_p x^*(T),\ x^1(T) \big\rangle \big] +\big\langle \bar H_p \mathbb E[x^*(T)],\ \mathbb E[x^1(T)] \big\rangle \Big\}\\
& +\varepsilon^2 \Big\{ \mathbb E\big[ \big\langle H_p x^1(T),\ x^1(T) \big\rangle \big] +\big\langle \bar H_p \mathbb E[x^1(T)],\ \mathbb E[x^1(T)] \big\rangle \Big\}\\
=\ & 2\varepsilon \mathbb E \Big[ \big\langle H_p x^*(T) +\bar H_p \mathbb E[x^*(T)],\ x^1(T) \big\rangle \Big]\\
& +\varepsilon^2 \Big\{ \mathbb E\Big[ \Big\langle H_p \big( x^1(T) -\mathbb E[x^1(T)]\big),\ x^1(T) -\mathbb E[x^1(T)] \Big\rangle \Big] +\Big\langle (H_p +\bar H_p) \mathbb E[x^1(T)],\ \mathbb E[x^1(T)] \Big\rangle \Big\}.
\end{aligned}
\]
Similar derivation can also be applied to $\mbox{II}(s)$ and $\mbox{III}(s)$. Therefore,
\[
J_p\big( u_p^*(\cdot) +\varepsilon u_p(\cdot), u_{-\{0,p\}}^*(\cdot); x_0, u_0(\cdot) \big) -J_p\big( u_p^*(\cdot), u_{-\{0,p\}}^*(\cdot); x_0, u_0(\cdot) \big) = 2\varepsilon \mbox{IV} +\varepsilon \mbox{V},
\]
where
\[
\begin{aligned}
\mbox{IV} =\ & \mathbb E\bigg\{ \big\langle H_p x^*(T) +\bar H_p \mathbb E[x^*(T)],\ x^1(T) \big\rangle +\int_0^T \Big[ \big\langle Q_px^* +\bar Q_p \mathbb E[x^*],\ x^1 \big\rangle\\
& +\big\langle R_pu^*_p +\bar R_p \mathbb E[u^*_p],\ u_p \big\rangle \Big]\, \mathrm ds \bigg\},\\
\mbox{V} =\ & \mathbb E\bigg\{ \Big\langle H_p\big( x^1(T) -\mathbb E[x^1(T)] \big),\ x^1(T) -\mathbb E[x^1(T)] \Big\rangle + \Big\langle (H_p +\bar H_p) \mathbb E[x^1(T)],\ \mathbb E[x^1(T)] \Big\rangle\\
& +\int_0^T \Big[ \Big\langle Q_p \big( x^1 -\mathbb E[x^1] \big),\ x^1 -\mathbb E[x^1] \Big\rangle +\Big\langle (Q_p +\bar Q_p) \mathbb E[x^1],\ \mathbb E[x^1] \Big\rangle \\
& +\Big\langle R_p \big( u_p -\mathbb E[u_p] \big),\ u_p -\mathbb E[u_p] \Big\rangle +\Big\langle (R_p +\bar R_p) \mathbb E[u_p],\ \mathbb E[u_p] \Big\rangle \Big]\, \mathrm ds \bigg\}.
\end{aligned}
\]
By Assumption (H1), $\mbox{V} \geq 0$. Therefore, due to the arbitrariness of $\varepsilon$, \eqref{2.1T:E1} is equivalent to the following statement:
\begin{equation}\label{2.1T:E2}
\mbox{IV} =0,\quad \mbox{for any } u_p(\cdot) \in L^2_{\mathbb F}(0,T;\mathbb R^{m_p}) \mbox{ and any } p=1,2,\dots,N.
\end{equation}

Now, we introduce the so-called adjoint equations to further simplify \eqref{2.1T:E2}. In fact, for any $p=1,2,\dots,N$, the introduced adjoint equation is an MF-BSDE as follows: 
\begin{equation}\label{2.1T:AE}
\left\{
\begin{aligned}
& \mathrm dy_p = - \Big\{ A^\top y_p +\bar A^\top \mathbb E[y_p] +C^\top z_p +\bar C^\top \mathbb E[z_p] +Q_px^* +\bar Q_p \mathbb E[x^*] \Big\}\, \mathrm ds +z_p\, \mathrm dW,\\
& y_p(T) = H_p x^*(T) +\bar H_p \mathbb E[x^*(T)].
\end{aligned}
\right.
\end{equation}
By Buckdahn et al. \cite{BDLP-09}, MF-BSDE \eqref{2.1T:AE} admits a unique solution $(y_p(\cdot), z_p(\cdot)) \in L^2_{\mathbb F}(\Omega;C(0,T;\mathbb R^n)) \times L^2_{\mathbb F}(0,T;\mathbb R^n)$. We apply It\^o's formula to $\langle x^1(\cdot),\ y_p(\cdot) \rangle$ on the interval $[0,T]$, where $x^1(\cdot)$ is the solution to MF-SDE \eqref{2.1T:VE}, to get
\[
\begin{aligned}
& \mathbb E\bigg\{ \big\langle H_p x^*(T) +\bar H_p \mathbb E[x^*(T)],\ x^1(T) \big\rangle +\int_0^T \big\langle Q_px^* +\bar Q_p \mathbb E[x^*],\ x^1 \big\rangle\, \mathrm ds \bigg\}\\
=\ & \mathbb E \int_0^T \Big[ \big\langle B_p^\top y_p +D^\top_p z_p,\ u_p \big\rangle +\big\langle \bar B_p^\top y_p +\bar D_p^\top z_p,\ \mathbb E[u_p] \big\rangle \Big]\, \mathrm ds \\
=\ & \mathbb E \int_0^T \Big\langle B_p^\top y_p +\bar B_p^\top \mathbb E[y_p] +D_p^\top z_p +\bar D_p^\top \mathbb E[z_p],\ u_p \Big\rangle\, \mathrm ds.
\end{aligned}
\]
By substituting the above equation into $\mbox{IV}$, we rewrite \eqref{2.1T:E2} as:
\[
\begin{aligned}
\mathbb E\int_0^T \Big\langle R_p u_p^* +\bar R_p \mathbb E[u_p^*] +B_p^\top y_p +\bar B_p^\top \mathbb E[y_p] +D_p^\top z_p +\bar D_p^\top \mathbb E[z_p],\ u_p \Big\rangle\, \mathrm ds =0,\\
\mbox{for any } u_p(\cdot) \in L^2_{\mathbb F}(0,T;\mathbb R^{m_p}) \mbox{ and any } p=1,2,\dots,N.
\end{aligned}
\]
Due to the arbitrariness of $u_p(\cdot)$, \eqref{2.1T:E2} continues to be equivalent to the following statement:
\begin{equation}\label{2.1T:E3}
R_p u_p^* +\bar R_p \mathbb E[u_p^*] +B_p^\top y_p +\bar B_p^\top \mathbb E[y_p] +D_p^\top z_p +\bar D_p^\top \mathbb E[z_p] =0,\quad \mbox{for any } p=1,2,\dots,N.
\end{equation}

Now, we use Notations \eqref{2:N3} and \eqref{2:N4} to rewrite \eqref{2.1T:AE} and \eqref{2.1T:E3} in concise forms:
\begin{equation}\label{2.1T:Adj}
\left\{
\begin{aligned}
& \mathrm dy_{-0} = -\Big\{ \widetilde A^\top y_{-0} +\bar {\widetilde A}^\top \mathbb E[y_{-0}] +\widetilde C^\top z_{-0} +\bar{\widetilde C}^\top \mathbb E[z_{-0}] +Qx^* +\bar Q \mathbb E[x^*] \Big\}\, \mathrm ds +z_{-0}\, \mathrm dW,\\
& y_{-0}(T) = Hx^*(T) +\bar H \mathbb E[x^*(T)],
\end{aligned}
\right.
\end{equation}
and
\begin{equation}\label{2.1T:Alg}
\begin{aligned}
& 0 = Ru_{-0}^* +\bar R\mathbb E[u_{-0}^*] +\widetilde B_{-0}^\top y_{-0} +\bar{\widetilde B}_{-0}^\top \mathbb E[y_{-0}] +\widetilde D_{-0}^\top z_{-0} +\bar{\widetilde D}_{-0}^\top \mathbb E[z_{-0}],
\end{aligned}
\end{equation}
respectively. We notice that \eqref{2.1T:Sys}, \eqref{2.1T:Adj} and \eqref{2.1T:Alg} just combine to form the Hamiltonian system \eqref{2:Ham}. From the previous analysis, we can draw the following conclusions:
\begin{enumerate}[(i).]
\item If $u_{-0}^*(\cdot)$ is a Nash equilibrium point of Problem (LQG) at $(x_0,u_{0}(\cdot))$, then by letting $x^*(\cdot)$ be the solution to MF-SDE \eqref{2.1T:Sys} and $(y_{-0}(\cdot), z_{-0}(\cdot))$ be the solution to MF-BSDE \eqref{2.1T:Adj} we get that $(u^*_{-0}(\cdot),(x^*(\cdot)^\top, y_{-0}(\cdot)^\top, z_{-0}(\cdot)^\top)^\top)$ is a solution to the Hamiltonian system \eqref{2:Ham}.
\item If $(u^*_{-0}(\cdot),(x^*(\cdot)^\top, y_{-0}(\cdot)^\top, z_{-0}(\cdot)^\top)^\top)$ is a solution to the Hamiltonian system \eqref{2:Ham}, then $u_{-0}^*(\cdot)$ is also a Nash equilibrium point of Problem (LQG) at $(x_0,u_{0}(\cdot))$.
\end{enumerate}

{\bf Step 2.} By the conclusions of Step 1, it is obvious that the existence of Nash equilibrium point of Problem (LQG) at $(x_0,u_0(\cdot))$ is equivalent to that of solution of Hamiltonian system \eqref{2:Ham}. The remaining thing is to prove that the uniqueness of the two is equivalent to each other.

Firstly, assume that the solution of the Hamiltonian system \eqref{2:Ham} is unique. Then, it is obvious that Conclusion (i) in Step 1 implies the uniqueness of Nash equilibrium point.

Secondly, assume that the Nash equilibrium point is unique. Let 
\[
(u^*_{-0}(\cdot), (x^*(\cdot)^\top, y_{-0}(\cdot)^\top, z_{-0}(\cdot)^\top)^\top) \quad \mbox{and} \quad (u^{*\prime}_{-0}(\cdot), (x^{*\prime}(\cdot)^\top, y_{-0}^{\prime}(\cdot)^\top, z_{-0}^\prime(\cdot)^\top)^\top)
\] 
be two solutions of the Hamiltonian system \eqref{2:Ham}. Then, Conclusion (ii) in Step 1 implies that $u^*_{-0}(\cdot) = u^{*\prime}_{-0}(\cdot)$. Based on this, the uniqueness of MF-SDE \eqref{2.1T:Sys} implies that $x^*(\cdot) = x^{*\prime}(\cdot)$. Furthermore, the uniqueness of MF-BSDE \eqref{2.1T:Adj} implies that $(y_{-0}(\cdot),z_{-0}(\cdot)) = (y^\prime_{-0}(\cdot), z^\prime_{-0}(\cdot))$. We have proved the uniqueness of solution of Hamiltonian system \eqref{2:Ham}.
\end{proof}

Now, we would like to introduce another assumption which is obviously weaker than Assumption (H1).

\medskip

\noindent{\bf Assumption (H2).} The matrix-valued functions $R_i(\cdot)^{-1}$ and $(R_i(\cdot) +\bar R_i(\cdot))^{-1}$ exist and are bounded for any $i=1,2,\dots,N$.

\medskip

In fact, under Assumption (H2), we can solve $u^*(\cdot)$ from the algebraic equation, i.e., the first equation, in the Hamiltonian system \eqref{2:Ham} as follows:
\begin{equation}\label{2:NEP}
\begin{aligned}
u^*_{-0} =\ & -R^{-1} \Big\{ \widetilde B_{-0}^\top \big( y_{-0} -\mathbb E[y_{-0}] \big) +\widetilde D_{-0}^\top \big(z_{-0} -\mathbb E[z_{-0}]\big) \Big\}\\
& -\big( R +\bar R \big)^{-1} \Big\{ \big( \widetilde B_{-0} +\bar{\widetilde B}_{-0} \big)^\top \mathbb E[y_{-0}] +\big( \widetilde D_{-0} +\bar{\widetilde D}_{-0} \big)^\top \mathbb E[z_{-0}] \Big\}.
\end{aligned}
\end{equation}
Then, substituting the above \eqref{2:NEP} into the Hamiltonian system \eqref{2:Ham} yields an MF-FBSDE:
\begin{equation}\label{2:MF-FBSDE}
\left\{
\begin{aligned}
& \mathrm dx^* = \Big\{ Ax^* +\bar A\mathbb E[x^*] + B^y_{-0} y_{-0} +\bar B^y_{-0} \mathbb E[y_{-0}] + B^z_{-0} z_{-0} +\bar B^z_{-0} \mathbb E[z_{-0}] +B_0 u_0 +\bar B_0 \mathbb E[u_0] \Big\}\, \mathrm ds\\
& \quad +\Big\{ Cx^* +\bar C\mathbb E[x^*] + D^y_{-0} y_{-0} +\bar D^y_{-0} \mathbb E[y_{-0}] + D^z_{-0} z_{-0} +\bar D^z_{-0} \mathbb E[z_{-0}] +D_0 u_0 +\bar D_0 \mathbb E[u_0] \Big\}\, \mathrm dW,\\
& \mathrm dy_{-0} = -\Big\{ Qx^* +\bar Q \mathbb E[x^*] +\widetilde A^\top y_{-0} +\bar {\widetilde A}^\top \mathbb E[y_{-0}] +\widetilde C^\top z_{-0} +\bar{\widetilde C}^\top \mathbb E[z_{-0}] \Big\}\, \mathrm ds +z_{-0}\, \mathrm dW,\\
& x^*(0) =x_0, \qquad y_{-0}(T) = Hx^*(T) +\bar H \mathbb E[x^*(T)],
\end{aligned}
\right.
\end{equation}
where we use the notations:
\begin{equation}\label{2:N6}
\begin{aligned}
& B^y_{-0} = -B_{-0}R^{-1}\widetilde B_{-0}^\top,  && 
\bar B^y_{-0} = B_{-0}R^{-1}\widetilde B_{-0}^\top -(B_{-0} +\bar B_{-0}) (R+\bar R)^{-1} (\widetilde B_{-0} +\bar{\widetilde B}_{-0})^\top,\\
& B^z_{-0} = -B_{-0}R^{-1}\widetilde D_{-0}^\top,  && 
\bar B^z_{-0} = B_{-0}R^{-1}\widetilde D_{-0}^\top -(B_{-0} +\bar B_{-0}) (R+\bar R)^{-1} (\widetilde D_{-0} +\bar{\widetilde D}_{-0})^\top,\\
& D^y_{-0} = -D_{-0}R^{-1}\widetilde B_{-0}^\top,  && 
\bar D^y_{-0} = D_{-0}R^{-1}\widetilde B_{-0}^\top -(D_{-0} +\bar D_{-0}) (R+\bar R)^{-1} (\widetilde B_{-0} +\bar{\widetilde B}_{-0})^\top,\\
& D^z_{-0} = -D_{-0}R^{-1}\widetilde D_{-0}^\top,  && 
\bar D^z_{-0} = D_{-0}R^{-1}\widetilde D_{-0}^\top -(D_{-0} +\bar D_{-0}) (R+\bar R)^{-1} (\widetilde D_{-0} +\bar{\widetilde D}_{-0})^\top.
\end{aligned}
\end{equation}
It is easy to verify the following result.

\begin{corollary}\label{2.1:Cor}
Let Assumption (H1) hold. Let $(x_0, u_0(\cdot)) \in \mathbb R^n \times L^2_{\mathbb F}(0,T;\mathbb R^{m_0})$ be given. Then, the existence and uniqueness of Nash equilibrium point of Problem (LQG) at $(x_0, u_0(\cdot))$ are equivalent to that of solution of MF-FBSDE \eqref{2:MF-FBSDE}, respectively. In this case, if $\theta(\cdot) \in \Theta$ is a solution to \eqref{2:MF-FBSDE}, then $u_{-0}^*(\cdot)$ defined by \eqref{2:NEP} provides a Nash equilibrium point of Problem (LQG) at $(x_0,u_0(\cdot))$.
\end{corollary}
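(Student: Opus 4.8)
The plan is to reduce everything to the preceding theorem, which already identifies Nash equilibrium points of Problem (LQG) at $(x_0,u_0(\cdot))$ with solutions $(u_{-0}^*(\cdot),\theta(\cdot))\in L^2_{\mathbb F}(0,T;\mathbb R^{m_{-0}})\times\Theta$ of the Hamiltonian system \eqref{2:Ham}. It therefore suffices to exhibit, at each fixed $(x_0,u_0(\cdot))$, a bijection between the solution set of \eqref{2:Ham} and the solution set of \eqref{2:MF-FBSDE} in $\Theta$, under which the $\theta(\cdot)$-component is unchanged and $u_{-0}^*(\cdot)$ is recovered through \eqref{2:NEP}; a bijection transfers both existence and uniqueness, and the last assertion then follows from the last sentence of the theorem together with the identity \eqref{2:NEP}.

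First I would record that Assumption (H1) implies Assumption (H2): since $R_i(\cdot)\gg0$ and $R_i(\cdot)+\bar R_i(\cdot)\gg0$, these functions are bounded below by $\delta I_{m_i}$, so together with their $L^\infty$ bounds they have uniformly bounded inverses; hence the block-diagonal $R(\cdot)$ and $R(\cdot)+\bar R(\cdot)$ are invertible with bounded inverses, and \eqref{2:NEP} makes sense. Starting from a solution $(u_{-0}^*(\cdot),\theta(\cdot))$ of \eqref{2:Ham}, I would solve the algebraic (first) equation of \eqref{2:Ham} for $u_{-0}^*(\cdot)$. The only subtlety is the mean-field coupling: taking $\mathbb E[\cdot]$ on both sides of that equation gives
\[
0 = (R+\bar R)\mathbb E[u_{-0}^*] + (\widetilde B_{-0}+\bar{\widetilde B}_{-0})^\top \mathbb E[y_{-0}] + (\widetilde D_{-0}+\bar{\widetilde D}_{-0})^\top \mathbb E[z_{-0}],
\]
which determines $\mathbb E[u_{-0}^*]$ via $(R+\bar R)^{-1}$; subtracting this from the original equation determines $u_{-0}^*-\mathbb E[u_{-0}^*]$ via $R^{-1}$, and recombining yields exactly \eqref{2:NEP}. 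Plugging \eqref{2:NEP} into the forward equation for $x^*(\cdot)$ and into the backward equation for $y_{-0}(\cdot)$ in \eqref{2:Ham}, and collecting the coefficients of $y_{-0}$, $\mathbb E[y_{-0}]$, $z_{-0}$, $\mathbb E[z_{-0}]$, produces precisely the coefficients $B^y_{-0},\bar B^y_{-0},B^z_{-0},\bar B^z_{-0},D^y_{-0},\bar D^y_{-0},D^z_{-0},\bar D^z_{-0}$ of \eqref{2:N6}, so $\theta(\cdot)$ solves \eqref{2:MF-FBSDE}. Conversely, given $\theta(\cdot)\in\Theta$ solving \eqref{2:MF-FBSDE}, define $u_{-0}^*(\cdot)$ by \eqref{2:NEP}; by the bounded invertibility above and $\theta(\cdot)\in\Theta$ it lies in $L^2_{\mathbb F}(0,T;\mathbb R^{m_{-0}})$, it satisfies the algebraic equation of \eqref{2:Ham} by construction, and reversing the substitution shows the forward and backward equations of \eqref{2:Ham} hold as well. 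Since \eqref{2:NEP} is single-valued in $\theta(\cdot)$, the map $\theta(\cdot)\mapsto(u_{-0}^*(\cdot),\theta(\cdot))$ is the desired bijection, and combining with the preceding theorem yields both equivalences.

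There is no serious obstacle here — this is the ``easy to verify'' step flagged in the text — but the one computation to carry out carefully is the substitution leading to \eqref{2:N6}: one must keep the centered parts (carrying $R^{-1}$) and the expectation parts (carrying $(R+\bar R)^{-1}$) separate throughout, and then re-split each resulting coefficient into a non-mean-field piece and a mean-field piece so as to match the two columns of \eqref{2:N6}. Everything else is bookkeeping already done in the proof of the theorem.
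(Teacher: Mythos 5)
Your proposal is correct and follows essentially the same route the paper intends: the paper derives \eqref{2:NEP} from the algebraic equation under (H2) and substitutes it into \eqref{2:Ham} to obtain \eqref{2:MF-FBSDE} immediately before stating the corollary, leaving the verification of the resulting bijection (and the observation that (H1) implies (H2)) as the ``easy to verify'' step that you have written out. Your mean/centered decomposition for solving the algebraic equation is exactly the intended computation.
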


\begin{remark}\label{2.1:Rem}
From the viewpoint of Corollary \ref{2.1:Cor}, when the independent variable $(x_0, u_0(\cdot))$ changes in the space $\mathbb R^n \times L^2_{\mathbb F}(0,T;\mathbb R^{m_0})$, the rational agents use the solution of MF-FBSDE \eqref{2:MF-FBSDE} parameterized by $(x_0, u_0(\cdot))$ to define their Nash equilibrium strategy point by point.
\end{remark}

\subsection{Exact controllability at the upper layer}

Now we consider the behavior of the regulator at the upper layer. She/he wants to steer the state $x(\cdot)$ from any existing initial value $x_0 \in \mathbb R^n$ to an arbitrary desired terminal random variable $x_T\in L^2_{\mathcal F_T}(\Omega;\mathbb R^n)$.

At this time, the regulator wisely realizes that the agents at the lower layer are rational, i.e., they will adopt a Nash equilibrium strategy to play games. By Corollary \ref{2.1:Cor} and Remark \ref{2.1:Rem}, the Nash equilibrium strategy is linked to the linear MF-FBSDE \eqref{2:MF-FBSDE}. For the convenience of later research, we would like to remove the initial condition from \eqref{2:MF-FBSDE} to get the following system: 
\begin{equation}\label{2:MF-GBCS}
\left\{
\begin{aligned}
& \mathrm dx^* = \Big\{ Ax^* +\bar A\mathbb E[x^*] + B^y_{-0} y_{-0} +\bar B^y_{-0} \mathbb E[y_{-0}] + B^z_{-0} z_{-0} +\bar B^z_{-0} \mathbb E[z_{-0}] +B_0 u_0 +\bar B_0 \mathbb E[u_0] \Big\}\, \mathrm ds\\
& \quad +\Big\{ Cx^* +\bar C\mathbb E[x^*] + D^y_{-0} y_{-0} +\bar D^y_{-0} \mathbb E[y_{-0}] + D^z_{-0} z_{-0} +\bar D^z_{-0} \mathbb E[z_{-0}] +D_0 u_0 +\bar D_0 \mathbb E[u_0] \Big\}\, \mathrm dW,\\
& \mathrm dy_{-0} = -\Big\{ Qx^* +\bar Q \mathbb E[x^*] +\widetilde A^\top y_{-0} +\bar {\widetilde A}^\top \mathbb E[y_{-0}] +\widetilde C^\top z_{-0} +\bar {\widetilde C}^\top \mathbb E[z_{-0}] \Big\}\, \mathrm ds +z_{-0}\, \mathrm dW,\\
& y_{-0}(T) = Hx^*(T) +\bar H \mathbb E[x^*(T)].
\end{aligned}
\right.
\end{equation}
Similar to Zhang and Guo \cite{ZhG-19AC, ZhG-19SICON} and Liu and Yu \cite{LY-22+}, we call the above \eqref{2:MF-GBCS} an MF-GBCS. We note that the coefficients of \eqref{2:MF-GBCS} involve $R(\cdot)^{-1}$ and $(R(\cdot) +\bar R(\cdot))^{-1}$ (see the definitions \eqref{2:N6}), then we always discuss \eqref{2:MF-GBCS} under Assumption (H2). Moreover, MF-GBCS \eqref{2:MF-GBCS} involves the agents' Nash equilibrium strategy and it is the essentially controlled system faced by the regulator.

\begin{definition}\label{2.2:Def}
MF-GBCS \eqref{2:MF-GBCS} is called {\rm exactly controllable} on $[0,T]$, if for any $(x_0, x_T) \in \mathbb R^n \times L^2_{\mathcal F_T}(\Omega;\mathbb R^n)$, there exists a pair of processes $(u_0(\cdot), \theta(\cdot)) \in L^2_{\mathbb F}(0,T;\mathbb R^{m_0}) \times \Theta$ (see the notations \eqref{2:N5}) satisfying MF-SGBCS \eqref{2:MF-GBCS} and the following initial and terminal conditions:
\begin{equation}\label{2:ITC}
x^*(0) =x_0,\quad x^*(T) = x_T.
\end{equation}
\end{definition}

\begin{remark}\label{2.2:Rem}
(i). The exact controllability of MF-GBCS \eqref{2:MF-GBCS} implies the existence of MF-FBSDE \eqref{2:MF-FBSDE} in the following sense. For any $x_0 \in \mathbb R^n$ and any supplementary $x_T\in L^2_{\mathcal F_T}(\Omega;\mathbb R^n)$, due to the exact controllability of \eqref{2:MF-GBCS}, there exists a pair $(u_0(\cdot), \theta(\cdot)) \in L^2_{\mathbb F}(0,T;\mathbb R^{m_{0}}) \times \Theta$ (which depends on $(x_0, x_T)$) to make MF-FBSDE \eqref{2:MF-FBSDE} satisfied (additionally, $x^*(T) =x_T$ is also satisfied). Therefore, for the above $x_0$ and $u_0(\cdot)$, MF-FBSDE \eqref{2:MF-FBSDE} admits a solution $\theta(\cdot)$. By Corollary \ref{2.1:Cor} and Remark \ref{2.1:Rem}, the exact controllability of MF-GBCS \eqref{2:MF-GBCS} further implies the existence of Nash equilibrium strategy of the agents.

(ii). We introduce

\medskip

\noindent{\bf Assumption (H3).} {\rm For any $(x_0, u_0(\cdot)) \in \mathbb R^n \times L^2_{\mathbb F}(0,T;\mathbb R^{m_0})$, MF-FBSDE \eqref{2:MF-FBSDE} has at most one solution in the space $\Theta$.}

\medskip

\noindent It is easy to verify that, if Assumption (H3) is true, then in Definition \ref{2.2:Def}, when $x_0$, $x_T$ and $u_0(\cdot)$ is given, the selection of $\theta(\cdot)$ is unique. Additionally, we notice that Assumption (H3) can be implied by Assumption (H1) in the special case of $N=1$ (see Tian and Yu \cite[Theorem 3.5]{TY-23} for example). But, how to remove Assumption (H3) when $N\geq 2$ is challenge and remains open at this stage.
\end{remark}

At the end of this subsection, we summarize the problems that the regulator needs to consider: (i) Judge whether MF-GBCS \eqref{2:MF-GBCS} is exactly controllable; (ii) If MF-GBCS \eqref{2:MF-GBCS} is exactly controllable, then for any existing initial state $x_0 \in \mathbb R^n$ and any desired terminal state $x_T \in L^2_{\mathcal F_T}(\Omega;\mathbb R^n)$, the regulator further looks for an admissible control $u_0(\cdot) \in L^2_{\mathbb F}(0,T;\mathbb R^{m_0})$ steering the corresponding state $x^*(\cdot)$ from $x_0$ to $x_T$. In the following Section \ref{SEC:Exact} and Section \ref{SEC:Steer}, we will try to solve these two problems respectively.

\subsection{An example of the MF-GBCS}

We try to give an example where the MF-GBCS framework may be applied.

\begin{example}\label{2.3:Example}
The principal-agent model is one of the common models in economic and financial activities (see \cite{S-08, W-15} for example). In this example, We begin with the Black-Scholes financial market model in which two assets are traded continuously. One is a risk-free bond with the interest rate $r>0$ and the other is a stock with the appreciation rate $\mu>r$ and the volatility $\sigma>0$. A principal (the regulator) employs two portfolio managers (the agents) to invest in the Black-Scholes financial market. Similar to Merton \cite{M-71} and El Karoui et al. \cite{EPQ-97}, the wealth process satisfies the following linear SDE:
\begin{equation}
\left\{
\begin{aligned}
& \mathrm dx(s) = \Big\{ rx(s) +(\mu-r) \big( \pi_1(s) +\pi_2(s) \big) +u_0(s) \Big\}\, \mathrm ds\\
& \hskip 14mm +\sigma \big( \pi_1(s) +\pi_2(s) \big)\, \mathrm dW(s),\quad s\in [0,T],\\
& x(0) =x_0,
\end{aligned}
\right.
\end{equation}
where $x_0 \in \mathbb R$ is the initial endowment, $\pi_i(s)$ is the dollar amount invested in the stock by Manager $i$ at time $s\in [0,T]$ ($i=1,2$), and $u_0(s)$ is the rate of money injected or withdrawn by the principal. The process $\pi_i(\cdot) \in L^2_{\mathbb F}(0,T;\mathbb R)$ (called the portfolio process in financial terminology) is actually the admissible control process of Manager $i$ ($i=1,2$). Similarly, $u_0(\cdot) \in L^2_{\mathbb F}(0,T;\mathbb R)$ is the admissible control process of the principal.

Denote $\pi(\cdot) = (\pi_1(\cdot), \pi_2(\cdot))^\top$. For any $(x_0, u_0(\cdot)) \in \mathbb R \times L^2_{\mathbb F}(0,T;\mathbb R)$, suppose that Manager $i$ aims to minimize the following cost functional:
\begin{equation}\label{2.3E:Cost}
\begin{aligned}
J_i\big( \pi(\cdot); x_0, u_0(\cdot) \big) = \mathbb E \bigg\{ & \alpha_i(x(T)) +\bar\alpha_i\big|x(T) -\mathbb E[x(T)]\big|^2 +\int_0^T \Big[ \beta_i(s,x(s))\\
& +\bar\beta_i \big| x(s) -\mathbb E[x(s)] \big|^2 +\gamma_i(s,\pi_i(s)) \Big]\, \mathrm ds \bigg\},\quad i=1,2.
\end{aligned}
\end{equation}
Here, $\gamma_i(s,\pi_i(s))$ represents the rate of the fees charged by securities regulatory commission. The cost functional \eqref{2.3E:Cost} is inspired by Liu and Yu \cite{LY-22+}, but two items $\bar\alpha_i \mbox{Var}\ [x(T)] = \bar\alpha_i \mathbb E[|x(T) -\mathbb E[x(T)]|^2]$ and $\bar \beta_i \int_0^T  \mbox{Var}\ [x(s)]\, \mathrm ds = \bar\beta_i \int_0^T \mathbb E[|x(s) -\mathbb E[x(s)]|^2]\, \mathrm ds$ are added to characterize the risk aversion (or risk seeking) of Manager $i$. According to the cost functionals \eqref{2.3E:Cost}, the two managers play an LQ Nash game.

Due to some financial planning, the principal at the upper layer hope to achieve a desired goal of terminal wealth by injecting or withdrawing money, i.e. the principal is faced with a problem of exact controllability.
\end{example}

\section{Exact controllability of MF-GBCS}\label{SEC:Exact}

When we consider the exact controllability of MF-GBCS \eqref{2:MF-GBCS}, unlike deterministic systems, the arbitrary terminal state $x_T$ belongs to an infinite dimensional space $L^2_{\mathcal F_T}(\Omega;\mathbb R^n)$, which makes the problem difficult. In order to overcome the difficulty caused by the infinite dimension, similar to Peng \cite{P-94} (see also \cite{LP-10, WYYY-17, Y-21, LY-22+}), we will adopt a ``backward'' viewpoint and method.

\subsection{Backward system and exact null-controllability}

First of all, similar to Peng \cite[Definition 3.1]{P-94} and Yu \cite[Definition 3.2]{Y-21} respectively, we introduce a couple of concepts as follows.

\begin{definition}
(i). MF-GBCS \eqref{2:MF-GBCS} is called {\rm exactly terminal-controllable} on $[0,T]$, if for any $x_T \in L^2_{\mathcal F_T}(\Omega;\mathbb R^n)$, there exists a pair of processes $(u_0(\cdot), \theta(\cdot)) \in L^2_{\mathbb F}(0,T;\mathbb R^{m_0}) \times \Theta$ satisfying MF-GBCS \eqref{2:MF-GBCS} and the following terminal condition:
\begin{equation}\label{3:TC}
x^*(T) =x_T.
\end{equation}
(ii). MF-GBCS \eqref{2:MF-GBCS} is called {\rm exactly null-controllable} on $[0,T]$, if for any $x_0 \in \mathbb R^n$, there exists a pair of processes $(u_0(\cdot), \theta(\cdot)) \in L^2_{\mathbb F}(0,T;\mathbb R^{m_0}) \times \Theta$ satisfying MF-GBCS \eqref{2:MF-GBCS} and the following initial-terminal condition:
\begin{equation}\label{3:IT0C}
x^*(0) =x_0,\quad x^*(T) =0.
\end{equation}
\end{definition}

Obviously, both the exact terminal-controllability and the exact null-controllability are weaker than the exact controllability (see Definition \ref{2.2:Def}). However, the following proposition shows that these two weaker concepts together are equivalent to the exact controllability.

\begin{proposition}\label{3.1P:E}
Under Assumption (H2), MF-GBCS \eqref{2:MF-GBCS} is exactly controllable if and only if, it is both exactly terminal-controllable and exactly null-controllable.
\end{proposition}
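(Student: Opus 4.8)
The plan is to prove the two directions of the equivalence separately, with the forward direction being trivial and the backward direction requiring a superposition argument that exploits the linearity of MF-GBCS \eqref{2:MF-GBCS}. For the ``only if'' direction: if the system is exactly controllable in the sense of Definition \ref{2.2:Def}, then given any $x_T\in L^2_{\mathcal F_T}(\Omega;\mathbb R^n)$ we apply the definition with initial value $x_0=0$ to get exact terminal-controllability, and given any $x_0\in\mathbb R^n$ we apply it with terminal value $x_T=0$ to get exact null-controllability. This direction is immediate.

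For the ``if'' direction, suppose MF-GBCS \eqref{2:MF-GBCS} is both exactly terminal-controllable and exactly null-controllable, and fix an arbitrary pair $(x_0,x_T)\in\mathbb R^n\times L^2_{\mathcal F_T}(\Omega;\mathbb R^n)$. First I would use exact null-controllability at the initial value $x_0$ to obtain a pair $(u_0^{(1)}(\cdot),\theta^{(1)}(\cdot))$ with $\theta^{(1)}=(x^{*(1)\top},y_{-0}^{(1)\top},z_{-0}^{(1)\top})^\top$ solving \eqref{2:MF-GBCS} with $x^{*(1)}(0)=x_0$ and $x^{*(1)}(T)=0$. Next I would use exact terminal-controllability at the target $x_T$ to obtain a pair $(u_0^{(2)}(\cdot),\theta^{(2)}(\cdot))$ solving \eqref{2:MF-GBCS} with $x^{*(2)}(T)=x_T$ (the initial value of $x^{*(2)}$ being whatever it happens to be). Then I would set $u_0:=u_0^{(1)}+u_0^{(2)}$ and $\theta:=\theta^{(1)}+\theta^{(2)}$, and claim this pair witnesses exact controllability for $(x_0,x_T)$.

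The key point to verify is that this sum still solves MF-GBCS \eqref{2:MF-GBCS}, which is where linearity does the work: every equation in \eqref{2:MF-GBCS} is linear and \emph{homogeneous} in the tuple $(x^*,y_{-0},z_{-0},u_0)$ — there is no affine forcing term — so the sum of two solutions is again a solution. In particular the forward SDE, the backward SDE driver, and the terminal relation $y_{-0}(T)=Hx^*(T)+\bar H\,\mathbb E[x^*(T)]$ are all preserved under addition, and the expectation operator $\mathbb E[\cdot]$ is linear as well. The initial and terminal conditions then follow by additivity: $x^*(0)=x^{*(1)}(0)+x^{*(2)}(0)=x_0+x^{*(2)}(0)$, which is not yet $x_0$ — so I must be slightly more careful. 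The correct bookkeeping is: run null-controllability from the \emph{shifted} initial value $x_0 - x^{*(2)}(0)$ rather than from $x_0$. Concretely, first apply exact terminal-controllability to get $(u_0^{(2)},\theta^{(2)})$ reaching $x_T$ with some initial value $a:=x^{*(2)}(0)\in\mathbb R^n$; then apply exact null-controllability at the initial value $x_0-a$ to get $(u_0^{(1)},\theta^{(1)})$ with $x^{*(1)}(0)=x_0-a$, $x^{*(1)}(T)=0$; the sum then satisfies $x^*(0)=(x_0-a)+a=x_0$ and $x^*(T)=0+x_T=x_T$, as required, and $(u_0,\theta)\in L^2_{\mathbb F}(0,T;\mathbb R^{m_0})\times\Theta$ since both spaces are linear.

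The only genuine obstacle — and it is mild — is confirming that MF-GBCS \eqref{2:MF-GBCS} really is homogeneous, i.e. that the coefficients $B^y_{-0},\bar B^y_{-0},\dots$ defined in \eqref{2:N6} do not introduce inhomogeneous terms; inspection of \eqref{2:MF-GBCS} shows the only input is $u_0$, which we are free to superpose, so homogeneity holds and the superposition is legitimate. Role of Assumption (H2): it guarantees the coefficients in \eqref{2:N6} are well-defined and bounded, hence that all four MF-SDEs/MF-BSDEs appearing (for the two summands and implicitly for the difference checks) admit solutions in the stated spaces; without it the system \eqref{2:MF-GBCS} itself is not meaningful. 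This completes the plan.
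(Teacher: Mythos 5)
Your proof is correct and follows essentially the same route as the paper: the forward direction is immediate from the definitions, and the backward direction uses exactly the paper's decomposition — first reach $x_T$ by terminal-controllability with some incidental initial value $a$, then superpose a null-controlled trajectory started from the shifted value $x_0-a$, relying on the homogeneity and linearity of \eqref{2:MF-GBCS}. Your self-correction of the bookkeeping (shifting the initial value of the null-controlled piece) lands precisely on the paper's construction, so there is nothing further to add.
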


\begin{proof}
The necessity is obvious. Then, we only need prove the sufficiency. For any $(x_0, x_T) \in \mathbb R^n \times L^2_{\mathcal F_T}(\Omega;\mathbb R^n)$, since MF-GBCS \eqref{2:MF-GBCS} is exactly terminal-controllable, then there exists $(u_0^1(\cdot),\theta^1(\cdot)) \in L^2_{\mathbb F}(0,T;\mathbb R^{m_0}) \times \Theta$ such that
{\small
\[
\left\{
\begin{aligned}
& \mathrm dx^{*1} = \Big\{ Ax^{*1} +\bar A\mathbb E[x^{*1}] + B^y_{-0} y_{-0}^1 +\bar B^y_{-0} \mathbb E[y_{-0}^1] + B^z_{-0} z_{-0}^1 +\bar B^z_{-0} \mathbb E[z_{-0}^1] +B_0 u_0^1 +\bar B_0 \mathbb E[u_0^1] \Big\}\, \mathrm ds\\
& \quad +\Big\{ Cx^{*1} +\bar C\mathbb E[x^{*1}] + D^y_{-0} y_{-0}^1 +\bar D^y_{-0} \mathbb E[y_{-0}^1] + D^z_{-0} z_{-0}^1 +\bar D^z_{-0} \mathbb E[z_{-0}^1] +D_0 u_0^1 +\bar D_0 \mathbb E[u_0^1] \Big\}\, \mathrm dW,\\
& \mathrm dy_{-0}^1 = -\Big\{ Qx^{*1} +\bar Q \mathbb E[x^{*1}] +\widetilde A^\top y_{-0}^1 +\bar {\widetilde A}^\top \mathbb E[y_{-0}^1] +\widetilde C^\top z_{-0}^1 +\bar {\widetilde C}^\top \mathbb E[z_{-0}^1] \Big\}\, \mathrm ds +z_{-0}^1\, \mathrm dW,\\
& y_{-0}^1(T) = Hx^{*1}(T) +\bar H \mathbb E[x^{*1}(T)],\qquad x^{*1}(T) =x_T.
\end{aligned}
\right.
\]}
On the other hand, since MF-GBCS \eqref{2:MF-GBCS} is also exactly null-controllable, then there exists $(u_0^2(\cdot), \theta^2(\cdot)) \in L^2_{\mathbb F}(0,T;\mathbb R^{m_0}) \times \Theta$ such that
{\small
\[
\left\{
\begin{aligned}
& \mathrm dx^{*2} = \Big\{ Ax^{*2} +\bar A\mathbb E[x^{*2}] + B^y_{-0} y_{-0}^2 +\bar B^y_{-0} \mathbb E[y_{-0}^2] + B^z_{-0} z_{-0}^2 +\bar B^z_{-0} \mathbb E[z_{-0}^2] +B_0 u_0^2 +\bar B_0 \mathbb E[u_0^2] \Big\}\, \mathrm ds\\
& \quad +\Big\{ Cx^{*2} +\bar C\mathbb E[x^{*2}] + D^y_{-0} y_{-0}^2 +\bar D^y_{-0} \mathbb E[y_{-0}^2] + D^z_{-0} z_{-0}^2 +\bar D^z_{-0} \mathbb E[z_{-0}^2] +D_0 u_0^2 +\bar D_0 \mathbb E[u_0^2] \Big\}\, \mathrm dW,\\
& \mathrm dy_{-0}^2 = -\Big\{ Qx^{*2} +\bar Q \mathbb E[x^{*2}] +\widetilde A^\top y_{-0}^2 +\bar {\widetilde A}^\top \mathbb E[y_{-0}^2] +\widetilde C^\top z_{-0}^2 +\bar {\widetilde C}^\top \mathbb E[z_{-0}^2] \Big\}\, \mathrm ds +z_{-0}^2\, \mathrm dW,\\
& y_{-0}^2(T) = Hx^{*2}(T) +\bar H \mathbb E[x^{*2}(T)],\qquad x^{*2}(0) =x_0 -x^{*1}(0),\qquad x^{*2}(T) =0.
\end{aligned}
\right.
\]}
Due to the linearity of MF-GBCS \eqref{2:MF-GBCS}, the pair of processes
\[
\big(u_0(\cdot),\ \theta(\cdot)\big) := \Big( u_0^1(\cdot) +u_0^2(\cdot),\ \theta^1(\cdot) +\theta^2(\cdot) \Big) \in L^2_{\mathbb F}(0,T;\mathbb R^{m_0}) \times \Theta
\]
satisfies
\[
\left\{
\begin{aligned}
& \mathrm dx^* = \Big\{ Ax^* +\bar A\mathbb E[x^*] + B^y_{-0} y_{-0} +\bar B^y_{-0} \mathbb E[y_{-0}] + B^z_{-0} z_{-0} +\bar B^z_{-0} \mathbb E[z_{-0}] +B_0 u_0 +\bar B_0 \mathbb E[u_0] \Big\}\, \mathrm ds\\
& \quad +\Big\{ Cx^* +\bar C\mathbb E[x^*] + D^y_{-0} y_{-0} +\bar D^y_{-0} \mathbb E[y_{-0}] + D^z_{-0} z_{-0} +\bar D^z_{-0} \mathbb E[z_{-0}] +D_0 u_0 +\bar D_0 \mathbb E[u_0] \Big\}\, \mathrm dW,\\
& \mathrm dy_{-0} = -\Big\{ Qx^* +\bar Q \mathbb E[x^*] +\widetilde A^\top y_{-0} +\bar {\widetilde A}^\top \mathbb E[y_{-0}] +\widetilde C^\top z_{-0} +\bar {\widetilde C}^\top \mathbb E[z_{-0}] \Big\}\, \mathrm ds +z_{-0}\, \mathrm dW,\\
& y_{-0}(T) = Hx^*(T) +\bar H \mathbb E[x^*(T)],\qquad x^*(0) = x_0, \qquad x^*(T) =x_T.
\end{aligned}
\right.
\]
We finish the proof.
\end{proof}

From the viewpoint of Proposition \ref{3.1P:E}, the study of exact controllability can be broken down into the study of exact terminal-controllability and exact null-controllability. We note that the exact null-controllability only requires the terminal state to reach $0$, so there is no the infinite dimension difficulty of ``any terminal state belongs to $L^2_{\mathcal F_T}(\Omega;\mathbb R^n)$''. However, MF-GBCS \eqref{2:MF-GBCS} is not always exactly terminal-controllable. Next, we will give a sufficient condition, under which we can rewrite MF-GBCS \eqref{2:MF-GBCS} as a ``backward system'', and apply the conclusion of MF-BSDEs to obtain the exact terminal-controllability.

We introduce the following

\medskip

\noindent{\bf Assumption (H4).} $D_0(\cdot) D_0(\cdot)^\top \gg 0$ and $(D_0(\cdot) +\bar D_0(\cdot))(D_0(\cdot) +\bar D_0(\cdot))^\top \gg 0$.

\medskip

\noindent Obviously, Assumption (H4) implies that the dimension of the regulator's control is bigger than or equal to that of the state, i.e. $m_0\geq n$. Under Assumption (H4), $[D_0(\cdot) D_0(\cdot)^\top]^{-1}$ and $[(D_0(\cdot) +\bar D_0(\cdot))(D_0(\cdot) +\bar D_0(\cdot))^\top]^{-1}$ exist and are bounded. Especially, when $D_0(\cdot)$ and $D_0(\cdot) +\bar D_0(\cdot)$ are time-invariant, Assumption (H4) is equivalent to

\medskip

\noindent{\bf Assumption (H4$'$).} $\mbox{Rank } D_0 =n$ and $\mbox{Rank }(D_0 +\bar D_0) =n$.

\medskip

\noindent Here the notation $\mbox{Rank } \mathbb A$ denotes the rank of matrix $\mathbb A$.  We notice that, when Goreac \cite{Gor-14} and Yu \cite{Y-21} studied the exact terminal-controllability and the exact controllability of MF-SDE systems respectively, sufficient conditions similar to Assumption (H4$'$) or Assumption (H4) was also introduced. In the present paper, we shall work under Assumption (H4) to investigate the exact controllability of MF-GBCS \eqref{2:MF-GBCS}.

For convenience, we use the following notation convention in this paper: Let $M$, $\bar M$ and $\widehat M$ be three matrices or matrix-valued functions with the same dimension. Then they always satisfy the relationship
\begin{equation}\label{3:N1}
\widehat M = M +\bar M.
\end{equation}
Moreover, under Assumption (H4), we define (the argument $s$ is suppressed for simplicity)
\begin{equation}\label{3:N2}
\begin{aligned}
& \mathbf A_1 = A -B_0D_0^\top (D_0D_0^\top)^{-1}C, \quad
&& \widehat {\mathbf A}_1 = \widehat A - \widehat B_0 \widehat D_0^\top (\widehat D_0 \widehat D_0^\top)^{-1} \widehat C,\\
& \mathbf A_2 = B^y_{-0} -B_0D_0^\top(D_0D_0^\top)^{-1}D^y_{-0}, \quad
&& \widehat{\mathbf A}_2 = \widehat B^y_{-0} -\widehat B_0 \widehat D_0^\top( \widehat D_0 \widehat D_0^\top)^{-1} \widehat D^y_{-0},\\
& \mathbf C_1 = B_0 D_0^\top (D_0D_0^\top)^{-1}, \quad
&& \widehat{\mathbf C}_1 = \widehat B_0 \widehat D_0^\top (\widehat D_0 \widehat D_0^\top)^{-1},\\
& \mathbf C_2 =B^z_{-0} -B_0D_0^\top (D_0D_0^\top)^{-1}D^z_{-0}, \quad
&& \widehat{\mathbf C}_2 =\widehat B^z_{-0} -\widehat B_0 \widehat D_0^\top (\widehat D_0 \widehat D_0^\top)^{-1} \widehat D^z_{-0},\\
& \mathbf B_1 = B_0 \big[ I -D_0^\top(D_0D_0^\top)^{-1}D_0 \big], \quad
&& \widehat{\mathbf B}_1 = \widehat B_0 \big[ I - \widehat D_0^\top( \widehat D_0 \widehat D_0^\top)^{-1} \widehat D_0 \big],
\end{aligned}
\end{equation}
where $B^y_{-0}$, $D^y_{-0}$, $B^z_{-0}$, $D^z_{-0}$, $\widehat B^y_{-0}$, $\widehat D^y_{-0}$, $\widehat B^z_{-0}$, $\widehat D^z_{-0}$ are given by \eqref{2:N6} and the above \eqref{3:N1}. We continue to introduce
\begin{equation}\label{3:N3}
\begin{aligned}
& \mathbf A = \begin{pmatrix} \mathbf A_1 & \mathbf A_2 \\ -Q & -\widetilde A^\top \end{pmatrix}, \quad 
&& \mathbf C = \begin{pmatrix} \mathbf C_1 & \mathbf C_2 \\ 0 & -\widetilde C^\top \end{pmatrix}, \quad
&& \mathbf B = \begin{pmatrix} \mathbf B_1 \\ 0 \end{pmatrix},\\
& \bar{\mathbf A} = \begin{pmatrix} \bar{\mathbf A}_1 & \bar{\mathbf A}_2 \\ -\bar Q & -\bar{\widetilde A}^\top \end{pmatrix}, \quad 
&& \bar{\mathbf C} = \begin{pmatrix} \bar{\mathbf C}_1 & \bar{\mathbf C}_2 \\ 0 & -\bar{\widetilde C}^\top \end{pmatrix}, \quad
&& \bar{\mathbf B} = \begin{pmatrix} \bar{\mathbf B}_1 \\ 0 \end{pmatrix},
\end{aligned}
\end{equation}
where $Q$, $\bar Q$, $\widetilde A$, $\bar{\widetilde A}$, $\widetilde C$, $\bar{\widetilde C}$ are given by \eqref{2:N3}. With these notations, we introduce the following system:
\begin{equation}\label{3:BS}
\left\{
\begin{aligned}
& \mathrm dy = \Big\{ \mathbf A y +\bar{\mathbf A} \mathbb E[y] + \mathbf C z +\bar{\mathbf C} \mathbb E[z] +\mathbf B v +\bar{\mathbf B} \mathbb E[v] \Big\}\, \mathrm ds +z\, \mathrm dW, \quad s\in [0,T],\\
& y_{-0}(T) = Hx^*(T) +\bar H \mathbb E[x^*(T)],
\end{aligned}
\right.
\end{equation}
where we has denoted
\begin{equation}\label{3:N4}
y(\cdot) = \begin{pmatrix} x^*(\cdot)^\top & y_{-0}(\cdot)^\top \end{pmatrix}^\top \quad \mbox{and} \quad 
z(\cdot) = \begin{pmatrix} q(\cdot)^\top & z_{-0}(\cdot)^\top \end{pmatrix}^\top.
\end{equation}
Similar to \eqref{2:N5}, here we also denote
\begin{equation}\label{3:N5}
\begin{aligned}
& \pi(\cdot) = \begin{pmatrix} y(\cdot)^\top & z(\cdot)^\top \end{pmatrix}^\top,\\
& \Pi = L^2_{\mathbb F}(\Omega;C(0,T;\mathbb R^{(1+N)n})) \times L^2_{\mathbb F}(0,T;\mathbb R^{(1+N)n}).
\end{aligned}
\end{equation}

\begin{theorem}\label{3T-Equiv}
Let Assumptions (H2) and (H4) hold. Then MF-GBCS \eqref{2:MF-GBCS} is equivalent to System \eqref{3:BS} in the following sense:

\smallskip

\noindent (i). If $(u_0(\cdot), \theta(\cdot)) \in L^2_{\mathbb F}(0,T;\mathbb R^{m_0}) \times \Theta$ satisfies MF-GBCS \eqref{2:MF-GBCS}, then by letting
\begin{equation}\label{3T:u02qv}
\left\{
\begin{aligned}
q =\ & C\big( x^* -\mathbb E[x^*] \big) +D^y_{-0}\big( y_{-0} -\mathbb E[y_{-0}] \big) +D^z_{-0}\big( z_{-0} -\mathbb E[z_{-0}] \big) +D_0 \big( u_0 -\mathbb E[u_0]\big)\\
& +\widehat C \mathbb E[x^*] +\widehat D^y_{-0} \mathbb E[y_{-0}] +\widehat D^z_{-0} \mathbb E[z_{-0}] +\widehat D_0 \mathbb E[u_0],\\
v =\ & u_0,
\end{aligned}
\right.
\end{equation}
the pair of processes $(v(\cdot),\pi(\cdot)) \in L^2_{\mathbb F}(0,T;\mathbb R^{m_0}) \times \Pi$ satisfies System \eqref{3:BS};

\smallskip

\noindent (ii). If $(v(\cdot),\pi(\cdot)) \in L^2_{\mathbb F}(0,T;\mathbb R^{m_0}) \times \Pi$ satisfies System \eqref{3:BS}, then by letting
\begin{equation}\label{3T:qv2u0}
\begin{aligned}
u_0 =\ & D_0^\top (D_0D_0^\top)^{-1} \Big\{ \big( q-\mathbb E[q] \big) -C\big( x^* -\mathbb E[x^*] \big) -D^y_{-0} \big( y_{-0} -\mathbb E[y_{-0}] \big)\\
& -D^z_{-0} \big( z_{-0} -\mathbb E[z_{-0}] \big) \Big\} +\big[ I -D_0^\top (D_0D_0^\top)^{-1}D_0 \big] \big( v -\mathbb E[v] \big) +\widehat D_0^\top (\widehat D_0 \widehat D_0^\top)^{-1}\\
& \times \Big\{ \mathbb E[q] -\widehat C \mathbb E[x^*] -\widehat D^y_{-0} \mathbb E[y_{-0}] -\widehat D^z_{-0} \mathbb E[z_{-0}] \Big\} +\big[ I -\widehat D_0^\top (\widehat D_0 \widehat D_0^\top)^{-1}\widehat D_0 \big] \mathbb E[v],
\end{aligned}
\end{equation}
the pair of processes $(u_0(\cdot), \theta(\cdot)) \in L^2_{\mathbb F}(0,T;\mathbb R^{m_0}) \times \Theta$ satisfies MF-GBCS \eqref{2:MF-GBCS}.
\end{theorem}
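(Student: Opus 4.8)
The plan is to establish both implications by direct substitution, the only real device being the right-inverses $D_0^\top(D_0D_0^\top)^{-1}$ of $D_0$ and $\widehat D_0^\top(\widehat D_0\widehat D_0^\top)^{-1}$ of $\widehat D_0$, which are well defined and bounded thanks to Assumption (H4) (Assumption (H2) being what makes the coefficients \eqref{2:N6} of \eqref{2:MF-GBCS} meaningful in the first place). Throughout I would split every vector-valued process $\xi$ into its mean $\mathbb E[\xi]$ and its deviation $\xi-\mathbb E[\xi]$ and treat the two pieces in parallel under the convention \eqref{3:N1}: non-hatted coefficients act on deviations, hatted ones on means. Note that the $y_{-0}$-equation and the terminal condition in \eqref{2:MF-GBCS} contain neither $u_0$ nor the new variable $q$, so under the identification \eqref{3:N4} they match the last $Nn$ rows of \eqref{3:BS} verbatim via the block structure \eqref{3:N3}; only the $x^*$-equation requires computation. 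Finally, none of the existence/uniqueness theory is needed here — the argument is purely algebraic given the processes.

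For part (i), given $(u_0(\cdot),\theta(\cdot))$ solving \eqref{2:MF-GBCS}, set $v=u_0$ and let $q$ be defined by \eqref{3T:u02qv}. Regrouping its deviation and mean parts shows at once that $q$ equals the diffusion coefficient of the $x^*$-equation in \eqref{2:MF-GBCS}, so $\mathrm dx^*=\{\cdots\}\,\mathrm ds+q\,\mathrm dW$, and since all coefficients are bounded $q\in L^2_{\mathbb F}(0,T;\mathbb R^n)$, hence $(v,\pi)\in L^2_{\mathbb F}(0,T;\mathbb R^{m_0})\times\Pi$. To recast the drift, I would write $B_0=B_0D_0^\top(D_0D_0^\top)^{-1}D_0+B_0[I_m-D_0^\top(D_0D_0^\top)^{-1}D_0]$ and its hatted analogue, so that $B_0(u_0-\mathbb E[u_0])=\mathbf C_1\,D_0(u_0-\mathbb E[u_0])+\mathbf B_1(u_0-\mathbb E[u_0])$ and $\widehat B_0\mathbb E[u_0]=\widehat{\mathbf C}_1\,\widehat D_0\mathbb E[u_0]+\widehat{\mathbf B}_1\mathbb E[u_0]$; substituting the expressions for $D_0(u_0-\mathbb E[u_0])$ and $\widehat D_0\mathbb E[u_0]$ read off from the definition of $q$ turns the drift into a combination of $x^*,y_{-0},q,z_{-0},v$ and their means, and collecting coefficients reproduces precisely $\mathbf A_1,\mathbf A_2,\mathbf C_1,\mathbf C_2,\mathbf B_1$ of \eqref{3:N2} together with their hatted counterparts. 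Reading off \eqref{3:N3} then shows $(v,\pi)$ solves \eqref{3:BS}.

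For part (ii), given $(v(\cdot),\pi(\cdot))$ solving \eqref{3:BS}, define $u_0$ by \eqref{3T:qv2u0}; Assumption (H4) keeps every matrix appearing there bounded, so $u_0\in L^2_{\mathbb F}(0,T;\mathbb R^{m_0})$ and $\theta\in\Theta$. Applying $D_0$ to the deviation part of \eqref{3T:qv2u0} and $\widehat D_0$ to its mean part annihilates the $v$-terms (because $D_0[I_m-D_0^\top(D_0D_0^\top)^{-1}D_0]=0$ and likewise with hats) and yields $D_0(u_0-\mathbb E[u_0])=(q-\mathbb E[q])-C(x^*-\mathbb E[x^*])-D^y_{-0}(y_{-0}-\mathbb E[y_{-0}])-D^z_{-0}(z_{-0}-\mathbb E[z_{-0}])$ together with the corresponding identity for $\widehat D_0\mathbb E[u_0]$; feeding these into the diffusion coefficient of the $x^*$-equation of \eqref{2:MF-GBCS} collapses it to $q$. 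Applying instead $B_0$ to the deviation part and $\widehat B_0$ to the mean part of \eqref{3T:qv2u0}, and using the same substitutions together with the definitions \eqref{3:N2}, shows the drift of the $x^*$-equation of \eqref{2:MF-GBCS} coincides with the first block of the drift of \eqref{3:BS}. Thus $(u_0,\theta)$ solves \eqref{2:MF-GBCS}; a short extra check shows \eqref{3T:u02qv} and \eqref{3T:qv2u0} are mutually inverse on the relevant solution sets, so the correspondence is in fact a bijection.

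The computations are elementary linear algebra, so the main obstacle is organizational: keeping the hatted and non-hatted substitution chains disentangled and tracking the conceptual mechanism behind the bookkeeping — the map \eqref{3T:u02qv} reroutes the range-of-$D_0$ component of the regulator's influence $B_0u_0$ through the newly created \emph{unconstrained} diffusion $q$ of \eqref{3:BS}, while $\mathbf B_1v$ retains only the $\ker D_0$ component of $u_0$. It is precisely this split that dissolves the forward algebraic constraint tying $u_0$ to the dynamics in \eqref{2:MF-GBCS} and leaves \eqref{3:BS} as a mean-field backward object with free diffusion, which is what the later exact-terminal-controllability argument will exploit. One should also keep in mind that Assumption (H4), hence $m_0\ge n$, is used twice — to guarantee every desired $q$ is realized by some $u_0$ in part (ii), and to keep all inverse matrices bounded so the transformed processes stay square-integrable.
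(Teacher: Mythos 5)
Your proposal is correct and follows essentially the same route as the paper's proof: split every process into its mean and deviation, use the right-inverses $D_0^\top(D_0D_0^\top)^{-1}$ and $\widehat D_0^\top(\widehat D_0\widehat D_0^\top)^{-1}$ supplied by (H4) to exchange $B_0(u_0-\mathbb E[u_0])$ and $\widehat B_0\mathbb E[u_0]$ for the new diffusion $q$ plus the $\ker D_0$-component carried by $\mathbf B_1 v$, and match coefficients against \eqref{3:N2}--\eqref{3:N3}. The only quibble is your closing aside that \eqref{3T:u02qv} and \eqref{3T:qv2u0} are mutually inverse: composing them starting from $(v,\pi)$ returns $v'=u_0\neq v$ in general (the discrepancy lies in the range of $D_0^\top$ and is killed by $\mathbf B_1$, so both controls drive \eqref{3:BS} identically), but since the theorem asserts only the two implications, this does not affect the proof.
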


\begin{proof}
(i) Since $(u_0(\cdot), \theta(\cdot)) \in L^2_{\mathbb F}(0,T;\mathbb R^{m_0}) \times \Theta$, according to \eqref{3T:u02qv}, it is easy to verify that $(q(\cdot),v(\cdot)) \in L^2_{\mathbb F}(0,T;\mathbb R^n) \times L^2_{\mathbb F}(0,T;\mathbb R^{m_0})$, then $\pi(\cdot) \in \Pi$. Next, we shall prove that $(v(\cdot),\pi(\cdot))$ satisfies System \eqref{3:BS}.

Clearly, \eqref{3T:u02qv} is equivalent to 
\[
\left\{
\begin{aligned}
& \mathbb E[q] = \widehat C \mathbb E[x^*] +\widehat D^y_{-0} \mathbb E[y_{-0}] +\widehat D^z_{-0} \mathbb E[z_{-0}] +\widehat D_0 \mathbb E[u_0],\\
& q -\mathbb E[q] = C\big( x^* -\mathbb E[x^*] \big) +D^y_{-0}\big( y_{-0} -\mathbb E[y_{-0}] \big) +D^z_{-0}\big( z_{-0} -\mathbb E[z_{-0}] \big) +D_0 \big( u_0 -\mathbb E[u_0]\big),\\
& \mathbb E[v] = \mathbb E[u_0], \qquad v -\mathbb E[v] = u_0 -\mathbb E[u_0].
\end{aligned}
\right.
\]
Based on this, on the one hand, we calculate
\begin{equation}\label{3T:u02}
\begin{aligned}
& \mathbb E[u_0] = \widehat D_0^\top (\widehat D_0\widehat D_0^\top)^{-1} \widehat D_0 \mathbb E[u_0] +\big[ I -\widehat D_0^\top (\widehat D_0\widehat D_0^\top)^{-1} \widehat D_0 \big] \mathbb E[u_0]\\
=\ & \widehat D_0^\top (\widehat D_0\widehat D_0^\top)^{-1} \Big\{ \mathbb E[q] - \widehat C \mathbb E[x^*] -\widehat D^y_{-0} \mathbb E[y_{-0}] -\widehat D^z_{-0} \mathbb E[z_{-0}] \Big\} +\big[ I -\widehat D_0^\top (\widehat D_0\widehat D_0^\top)^{-1} \widehat D_0 \big] \mathbb E[v].
\end{aligned}
\end{equation}
With the help of Notation \eqref{3:N2}, we derive
\[
\begin{aligned}
\widehat B_0 \mathbb E[u_0] =\ & -\widehat B_0 \widehat D_0^\top (\widehat D_0 \widehat D_0^\top)^{-1} \widehat C \mathbb E[x^*] -\widehat B_0\widehat D_0^\top(\widehat D_0\widehat D_0^\top)^{-1} \widehat D^y_{-0} \mathbb E[y_{-0}] +\widehat B_0 \widehat D_0^\top (\widehat D_0\widehat D_0^\top)^{-1} \mathbb E[q]\\
& -\widehat B_0\widehat D_0^\top (\widehat D_0 \widehat D_0^\top)^{-1} \widehat D^z_{-0} \mathbb E[z_{-0}] +\widehat B_0 \big[ I -\widehat D_0^\top(\widehat D_0 \widehat D_0^\top)^{-1}\widehat D_0 \big]\mathbb E[v]\\
=\ & \big( \widehat{\mathbf A}_1 -\widehat A \big) \mathbb E[x^*] +\big( \widehat{\mathbf A}_2 -\widehat B^y_{-0} \big) \mathbb E[y_{-0}] +\widehat{\mathbf C}_1 \mathbb E[q] + \big( \widehat{\mathbf C}_2 -\widehat B^z_{-0} \big) \mathbb E[z_{-0}] +\widehat{\mathbf B}_1 \mathbb E[v].
\end{aligned}
\]
Therefore,
\begin{equation}\label{3T:FDr2}
\begin{aligned}
& \widehat A \mathbb E[x^*] +\widehat B^y_{-0} \mathbb E[y_{-0}] +\widehat B^z_{-0} \mathbb E[z_{-0}] +\widehat B_0 \mathbb E[u_0]\\
=\ & \widehat{\mathbf A}_1 \mathbb E[x^*] +\widehat{\mathbf A}_2 \mathbb E[y_{-0}] +\widehat{\mathbf C}_1 \mathbb E[q] +\widehat{\mathbf C}_2 \mathbb E[z_{-0}] +\widehat{\mathbf B}_1 \mathbb E[v].
\end{aligned}
\end{equation}
On the other hand, we can similarly have
\begin{equation}\label{3T:u01}
\begin{aligned}
u_0 -\mathbb E[u_0] =\ & D_0^\top (D_0D_0^\top)^{-1} \Big\{ \big( q -\mathbb E[q] \big) -C \big( x^* -\mathbb E[x^*] \big) -D^y_{-0} \big( y_{-0} -\mathbb E[y_{-0}] \big) \\
& -D^z_{-0} \big( z_{-0} -\mathbb E[z_{-0}] \big) \Big\} +\big[ I -D_0^\top (D_0D_0^\top)^{-1} D_0 \big] \big( v-\mathbb E[v] \big)
\end{aligned}
\end{equation}
and
\begin{equation}\label{3T:FDr1}
\begin{aligned}
& A \big( x^* -\mathbb E[x^*] \big) +B^y_{-0} \big( y_{-0} -\mathbb E[y_{-0}] \big) +B^z_{-0} \big( z_{-0} -\mathbb E[z_{-0}] \big) +B_0 \big( u_0 -\mathbb E[u_0] \big)\\
=\ & \mathbf A_1 \big( x^* -\mathbb E[x^*] \big) +\mathbf A_2 \big( y_{-0} -\mathbb E[y_{-0}] \big) +\mathbf C_1 \big( q-\mathbb E[q] \big) +\mathbf C_2 \big( z_{-0} -\mathbb E[z_{-0}] \big) +\mathbf B_1 \big(v -\mathbb E[v]\big).
\end{aligned}
\end{equation}

By \eqref{3T:u02qv}, \eqref{3T:FDr2} and \eqref{3T:FDr1} (also with the help of the notation convention \eqref{3:N1}), we rewrite MF-GBCS \eqref{2:MF-GBCS} as
\begin{equation}\label{3T:BS}
\left\{
\begin{aligned}
& \mathrm dx^* = \Big\{ \mathbf A_1 x^* +\bar{\mathbf A}_1 \mathbb E[x^*] +\mathbf A_2 y_{-0} +\bar{\mathbf A}_2 \mathbb E[y_{-0}] +\mathbf C_1 q +\bar{\mathbf C}_1 \mathbb E[q] +\mathbf C_2 z_{-0} +\bar{\mathbf C}_2 \mathbb E[z_{-0}]\\
& \qquad + \mathbf B_1 v +\bar{\mathbf B}_1 \mathbb E[v] \Big\}\, \mathrm ds +q\, \mathrm dW,\\
& \mathrm dy_{-0} = \Big\{ -Qx^* -\bar Q \mathbb E[x^*] -\widetilde A^\top y_{-0} -\bar {\widetilde A}^\top \mathbb E[y_{-0}] -\widetilde C^\top z_{-0} -\bar {\widetilde C}^\top \mathbb E[z_{-0}] \Big\}\, \mathrm ds +z_{-0}\, \mathrm dW,\\
& y_{-0}(T) = Hx^*(T) +\bar H \mathbb E[x^*(T)].
\end{aligned}
\right.
\end{equation}
By noticing Notations \eqref{3:N3} and \eqref{3:N4}, the above system happens to be \eqref{3:BS}.

(ii) Due to $(v(\cdot),\pi(\cdot)) \in L^2_{\mathbb F}(0,T;\mathbb R^{m_0})\times \Pi$, it is clear that \eqref{3T:qv2u0} implies that $(u_0(\cdot),\theta(\cdot)) \in L^2_{\mathbb F}(0,T;\mathbb R^{m_0})\times \Theta$. The remaining thing is to prove that $(u_0(\cdot), \theta(\cdot))$ satisfies MF-GBCS \eqref{2:MF-GBCS}.

First of all, we know that \eqref{3T:qv2u0} is equivalent to \eqref{3T:u02} and \eqref{3T:u01}. Moreover, from the derivation in Step (i), Equations \eqref{3T:FDr2} and \eqref{3T:FDr1} still hold true. Furthermore, we calculate from \eqref{3T:u02} and \eqref{3T:u01} to get
\[
\widehat D_0 \mathbb E[u_0] = \mathbb E[q] -\widehat C \mathbb E[x^*] -\widehat D^y_{-0} \mathbb E[y_{-0}] -\widehat D^z_{-0} \mathbb E[z_{-0}]
\]
and
\[
D_0 \big( u_0 -\mathbb E[u_0] \big) = \big(q -\mathbb E[q]\big) -C\big( x^* -\mathbb E[x^*]\big) -D^y_{-0} \big( y_{-0} -\mathbb E[y_{-0}] \big) -D^z_{-0} \big(z_{-0} -\mathbb E[z_{-0}]\big),
\]
respectively. Therefore,
\begin{equation}\label{3T:FDi}
q = Cx^* +\bar C\mathbb E[x^*] + D^y_{-0} y_{-0} +\bar D^y_{-0} \mathbb E[y_{-0}] + D^z_{-0} z_{-0} +\bar D^z_{-0} \mathbb E[z_{-0}] +D_0 u_0 +\bar D_0 \mathbb E[u_0].
\end{equation}
By \eqref{3T:FDr2}, \eqref{3T:FDr1} and \eqref{3T:FDi}, we derive MF-GBCS \eqref{2:MF-GBCS} from \eqref{3T:BS} (which is equivalent to \eqref{3:BS}). The proof is completed.
\end{proof}

According to Theorem \ref{3T-Equiv}, we can equivalently transfer the research from MF-GBCS \eqref{2:MF-GBCS} to System \eqref{3:BS}.

\begin{definition}
System \eqref{3:BS} is called {\rm exactly controllable} (resp. {\rm exactly terminal-controllable}, {\rm exactly null-controllable}) on $[0,T]$, if for any $(x_0, x_T) \in \mathbb R^n \times L^2_{\mathcal F_T}(\Omega;\mathbb R^n)$ (resp. $x_T \in L^2_{\mathcal F_T}(\Omega;\mathbb R^n)$, $x_0 \in \mathbb R^n$), there exists a pair of processes $(v(\cdot), \pi(\cdot)) \in L^2_{\mathbb F}(0,T;\mathbb R^{m_0}) \times \Pi$ satisfying System \eqref{3:BS} and the initial-terminal condition \eqref{2:ITC} (resp. the terminal condition \eqref{3:TC}, the initial-terminal condition \eqref{3:IT0C}).
\end{definition}

\begin{corollary}\label{3.1C-GBeB}
Under Assumptions (H2) and (H4), the exact controllability (resp. the exact terminal-controllability, the exact null-controllability) of MF-GBCS \eqref{2:MF-GBCS} is equivalent to that of System \eqref{3:BS}.
\end{corollary}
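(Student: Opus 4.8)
The plan is to derive everything from the one-to-one correspondence between solutions already furnished by Theorem \ref{3T-Equiv}, exploiting the single observation that neither of the two transformations \eqref{3T:u02qv}--\eqref{3T:qv2u0} alters the state process $x^*(\cdot)$. Indeed, $x^*(\cdot)$ is literally a common component of $\theta(\cdot)\in\Theta$ (see \eqref{2:N5}) and of $y(\cdot)$, hence of $\pi(\cdot)\in\Pi$ (see \eqref{3:N4}--\eqref{3:N5}); the maps in Theorem \ref{3T-Equiv} only redistribute the remaining data among $u_0$, $q$ and $v$. Consequently, any initial/terminal constraint imposed on $x^*(\cdot)$ --- namely $x^*(0)=x_0$, $x^*(T)=x_T$, or $x^*(T)=0$ --- is carried over unchanged under both directions of the correspondence, and the membership statements $(v(\cdot),\pi(\cdot))\in L^2_{\mathbb F}(0,T;\mathbb R^{m_0})\times\Pi$, $(u_0(\cdot),\theta(\cdot))\in L^2_{\mathbb F}(0,T;\mathbb R^{m_0})\times\Theta$ are exactly the ones already recorded in Theorem \ref{3T-Equiv} under Assumptions (H2) and (H4).

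Concretely, for the exact-controllability claim I would argue as follows. Suppose MF-GBCS \eqref{2:MF-GBCS} is exactly controllable and fix $(x_0,x_T)\in\mathbb R^n\times L^2_{\mathcal F_T}(\Omega;\mathbb R^n)$. Pick $(u_0(\cdot),\theta(\cdot))\in L^2_{\mathbb F}(0,T;\mathbb R^{m_0})\times\Theta$ satisfying \eqref{2:MF-GBCS} together with \eqref{2:ITC}, and define $(v(\cdot),\pi(\cdot))$ by \eqref{3T:u02qv}. By Theorem \ref{3T-Equiv}(i) this pair lies in $L^2_{\mathbb F}(0,T;\mathbb R^{m_0})\times\Pi$ and solves System \eqref{3:BS}; since its $x^*$-component is the original $x^*(\cdot)$, the conditions \eqref{2:ITC} still hold. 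As $(x_0,x_T)$ was arbitrary, System \eqref{3:BS} is exactly controllable. The reverse implication is symmetric, using Theorem \ref{3T-Equiv}(ii) and the transformation \eqref{3T:qv2u0}. The proofs of the exact-terminal-controllability and exact-null-controllability equivalences are word for word the same, merely replacing the pair of constraints \eqref{2:ITC} by the terminal condition \eqref{3:TC} or by the initial-terminal condition \eqref{3:IT0C}, respectively.

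I do not expect any serious obstacle here: the substantive work --- that \eqref{3T:u02qv} and \eqref{3T:qv2u0} genuinely intertwine the two systems, and that all relevant function-space memberships are preserved --- has already been carried out in Theorem \ref{3T-Equiv}. The only point requiring (brief) explicit mention is the bookkeeping remark that $x^*(\cdot)$ is invariant under the correspondence, so that the three boundary conditions transfer verbatim; everything else is a direct quotation of Theorem \ref{3T-Equiv}. If one prefers to handle all three equivalences in one stroke, one may first isolate the statement ``Theorem \ref{3T-Equiv} establishes a bijection between the solution set of \eqref{2:MF-GBCS} and that of \eqref{3:BS} that preserves the $x^*$-component,'' after which the corollary is immediate in each of its three forms.
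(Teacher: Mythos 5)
Your proof is correct and follows exactly the route the paper intends: the paper states this corollary without proof as an immediate consequence of Theorem \ref{3T-Equiv}, and your observation that the correspondences \eqref{3T:u02qv} and \eqref{3T:qv2u0} leave the common component $x^*(\cdot)$ untouched (so that the boundary conditions \eqref{2:ITC}, \eqref{3:TC}, \eqref{3:IT0C} transfer verbatim) is precisely the bookkeeping that justifies it.
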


\noindent Moreover, we easily verify that the proof of Proposition \ref{3.1P:E} only depends on the linearity of the system. Then it can be applied to System \eqref{3:BS} to yield

\begin{proposition}\label{3.1P-BCeNC}
Under Assumptions (H2) and (H4), System \eqref{3:BS} is exactly controllable if and only if, it is both exactly terminal-controllable and exactly null-controllable.
\end{proposition}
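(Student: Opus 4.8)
The plan is to repeat, almost verbatim, the argument used for Proposition \ref{3.1P:E}, since that argument never used any feature of MF-GBCS \eqref{2:MF-GBCS} beyond its \emph{linearity}, and never invoked any uniqueness statement. System \eqref{3:BS} is likewise linear and homogeneous in the tuple $(v(\cdot),\pi(\cdot))$, and both $L^2_{\mathbb F}(0,T;\mathbb R^{m_0})$ and $\Pi$ are linear spaces. The necessity (exact controllability trivially implies the two weaker notions) is immediate from the definitions, so the whole content is the sufficiency.

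For the sufficiency, fix $(x_0,x_T)\in\mathbb R^n\times L^2_{\mathcal F_T}(\Omega;\mathbb R^n)$. First I would use the exact terminal-controllability of System \eqref{3:BS} to produce $(v^1(\cdot),\pi^1(\cdot))\in L^2_{\mathbb F}(0,T;\mathbb R^{m_0})\times\Pi$ solving \eqref{3:BS} together with the terminal condition \eqref{3:TC}, i.e.\ $x^{*1}(T)=x_T$. Since $\pi^1(\cdot)\in\Pi$ forces the $x^{*1}$-component to be $\mathbb F$-adapted with continuous paths and $\mathcal F_0$ is $\mathbb P$-trivial, the value $x^{*1}(0)$ is a deterministic vector in $\mathbb R^n$. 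Next I would apply the exact null-controllability of System \eqref{3:BS} with the initial datum $x_0-x^{*1}(0)\in\mathbb R^n$ to obtain $(v^2(\cdot),\pi^2(\cdot))\in L^2_{\mathbb F}(0,T;\mathbb R^{m_0})\times\Pi$ solving \eqref{3:BS} together with the initial-terminal condition \eqref{3:IT0C} in the form $x^{*2}(0)=x_0-x^{*1}(0)$, $x^{*2}(T)=0$.

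Finally I would set $(v(\cdot),\pi(\cdot)):=(v^1(\cdot)+v^2(\cdot),\,\pi^1(\cdot)+\pi^2(\cdot))$, which again lies in $L^2_{\mathbb F}(0,T;\mathbb R^{m_0})\times\Pi$ since these spaces are closed under addition. By the linearity of the dynamics in \eqref{3:BS} (each of the coefficients $\mathbf A,\bar{\mathbf A},\mathbf C,\bar{\mathbf C},\mathbf B,\bar{\mathbf B}$ acts linearly and there is no free term) together with the linearity and homogeneity of the terminal coupling $y_{-0}(T)=Hx^*(T)+\bar H\mathbb E[x^*(T)]$, the sum $(v(\cdot),\pi(\cdot))$ solves System \eqref{3:BS}; moreover $x^*(0)=x^{*1}(0)+x^{*2}(0)=x_0$ and $x^*(T)=x^{*1}(T)+x^{*2}(T)=x_T$, so \eqref{2:ITC} holds, which is exactly the exact controllability of System \eqref{3:BS}.

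There is no genuine obstacle here, as the sentence preceding the statement already suggests. The only points to keep an eye on are the superposition step — verifying that adding two solution tuples preserves every equation of \eqref{3:BS}, both the It\^o dynamics and the terminal relation — and the fact that $x^{*1}(0)$ is legitimately an element of $\mathbb R^n$ rather than a genuinely random variable; both are immediate because $\mathbb F$ is the augmented Brownian filtration (so $\mathcal F_0$ is $\mathbb P$-trivial) and \eqref{3:BS} is linear with vanishing inhomogeneity. If one wished to be even more economical, one could simply note that Proposition \ref{3.1P:E} and its proof apply \emph{mutatis mutandis} to any linear system of the form \eqref{3:BS}.
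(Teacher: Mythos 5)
Your proof is correct and is essentially the paper's: the paper simply observes that the proof of Proposition \ref{3.1P:E} uses only the linearity of the system and hence applies verbatim to System \eqref{3:BS}, which is precisely the decomposition-and-superposition argument you carry out (and your closing remark about applying Proposition \ref{3.1P:E} \emph{mutatis mutandis} is exactly what the paper does).
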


We notice that the system \eqref{3:BS} and the terminal condition \eqref{3:TC} together form an MF-BSDE. Due to this, we would like to call \eqref{3:BS} a {\it backward system}. By the result of MF-BSDEs (see Buckdahn et al. \cite[Theorem 3.1]{BDLP-09} or Tian and Yu \cite[Proposition 2.2]{TY-23}), for any $x_T \in L^2_{\mathcal F_T}(\Omega;\mathbb R^n)$ and any $v(\cdot) \in L^2_{\mathbb F}(0,T;\mathbb R^{m_0})$, MF-BSDE \eqref{3:BS} and \eqref{3:TC} admits a unique solution $\pi(\cdot) \in \Pi$, which implies that the backward system \eqref{3:BS} is exactly terminal-controllable. Based on this, Corollary \ref{3.1C-GBeB} and Proposition \ref{3.1P-BCeNC} imply the following

\begin{corollary}\label{3.1C-3E}
Let Assumptions (H2) and (H4) hold. Then the following three statements are equivalent:
\begin{enumerate}[(i).]
\item MF-GBCS \eqref{2:MF-GBCS} is exactly controllable;
\item The backward system \eqref{3:BS} is exactly controllable;
\item The backward system \eqref{3:BS} is exactly null-controllable.
\end{enumerate}
\end{corollary}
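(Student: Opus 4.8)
The plan is to combine the structural results already established in this section rather than doing any fresh computation. The three statements we must link are: (i) exact controllability of MF-GBCS \eqref{2:MF-GBCS}, (ii) exact controllability of the backward system \eqref{3:BS}, and (iii) exact null-controllability of \eqref{3:BS}. The chain of implications I would set up is $(i)\Leftrightarrow(ii)$ via Corollary \ref{3.1C-GBeB}, then $(ii)\Leftrightarrow(iii)$ by showing that under Assumptions (H2) and (H4) the backward system \eqref{3:BS} is \emph{automatically} exactly terminal-controllable, so that Proposition \ref{3.1P-BCeNC} collapses to the single remaining condition.

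First I would invoke Corollary \ref{3.1C-GBeB}, which gives directly that, under (H2) and (H4), the exact controllability of MF-GBCS \eqref{2:MF-GBCS} is equivalent to that of System \eqref{3:BS}; this is exactly $(i)\Leftrightarrow(ii)$. Next I would observe that the pair consisting of the dynamics \eqref{3:BS} together with the terminal condition \eqref{3:TC} is precisely a (linear) mean-field backward SDE for the unknown $\pi(\cdot)=(y(\cdot)^\top,z(\cdot)^\top)^\top$, with the free input $v(\cdot)\in L^2_{\mathbb F}(0,T;\mathbb R^{m_0})$ playing the role of an inhomogeneous term and $x_T$ prescribing the $x^*$-component of the terminal value (note $y_{-0}(T)$ is then determined by $x_T$). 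By the existence-and-uniqueness theory for MF-BSDEs (Buckdahn et al.\ \cite[Theorem 3.1]{BDLP-09}, or Tian and Yu \cite[Proposition 2.2]{TY-23}), for \emph{any} $x_T\in L^2_{\mathcal F_T}(\Omega;\mathbb R^n)$ and any admissible $v(\cdot)$ — for instance $v\equiv 0$ — there is a unique solution $\pi(\cdot)\in\Pi$ whose terminal $x^*$-component equals $x_T$. Hence \eqref{3:BS} is exactly terminal-controllable with no further hypothesis.

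Then I would apply Proposition \ref{3.1P-BCeNC}: under (H2) and (H4), exact controllability of \eqref{3:BS} holds if and only if it is both exactly terminal-controllable and exactly null-controllable. Since exact terminal-controllability is always in force by the previous step, this reduces to: \eqref{3:BS} is exactly controllable $\iff$ \eqref{3:BS} is exactly null-controllable, i.e. $(ii)\Leftrightarrow(iii)$. Stringing together $(i)\Leftrightarrow(ii)$ and $(ii)\Leftrightarrow(iii)$ yields the equivalence of all three statements, completing the proof.

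This argument is essentially a bookkeeping exercise, so I do not anticipate a genuine obstacle; the only point requiring care is the verification that the MF-BSDE well-posedness theorem applies to \eqref{3:BS} with \eqref{3:TC}. One must check that the coefficients $\mathbf A,\bar{\mathbf A},\mathbf C,\bar{\mathbf C},\mathbf B,\bar{\mathbf B}$ built in \eqref{3:N2}–\eqref{3:N3} are bounded and measurable — which they are, since under (H4) the matrices $(D_0D_0^\top)^{-1}$ and $(\widehat D_0\widehat D_0^\top)^{-1}$ are bounded and all remaining ingredients lie in $L^\infty$ — and that the terminal datum $Hx^*(T)+\bar H\,\mathbb E[x^*(T)]$ with $x^*(T)=x_T$ lies in $L^2_{\mathcal F_T}$, which is immediate. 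With these routine checks in place the three-way equivalence follows cleanly from the earlier results.
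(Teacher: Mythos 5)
Your proposal is correct and follows exactly the paper's own route: the equivalence (i)$\Leftrightarrow$(ii) via Corollary \ref{3.1C-GBeB}, the observation that \eqref{3:BS} with \eqref{3:TC} is a well-posed MF-BSDE and hence automatically exactly terminal-controllable, and then Proposition \ref{3.1P-BCeNC} to collapse (ii) to (iii). No discrepancies to report.
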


\begin{remark}\label{3.1:RMK}
Let Assumptions (H2) and (H4) hold.
\begin{enumerate}[(i).]
\item Corollary \ref{3.1C-3E} indicates that the terminal coefficients $H$ and $\bar H$ do not affect the exact controllability of the backward system \eqref{3:BS} (or MF-GBCS \eqref{2:MF-GBCS}).
\item When $m_0 =n$, i.e. the dimension of the regulator's control is equal to that of the state, the coefficients $D_0(\cdot)$ and $\widehat D_0(\cdot)$ are $(n\times n)$-dimensional and non-singular. By the definition \eqref{3:N2},
\[
\mathbf B_1 =B_0 \big[ I -D_0^\top (D_0D_0^\top)^{-1}D_0 \big]
= B_0 \big[ I -D_0^\top (D_0^\top)^{-1} D_0^{-1} D_0 \big] =0.
\]
Similarly, $\widehat{\mathbf B}_1(\cdot) =0$. Then \eqref{3:N3} further implies that $\mathbf B(\cdot) =\bar{\mathbf B}(\cdot) =0$, i.e. the backward system \eqref{3:BS} does not depend on the control $v(\cdot)$. In this case, on the one hand, as we said earlier, the existence of MF-BSDE indicates that the backward system \eqref{3:BS} (or MF-GBCS \eqref{2:MF-GBCS}) is exactly terminal-controllable; on the other hand, the uniqueness of MF-BSDE shows that the backward system \eqref{3:BS} (or MF-GBCS \eqref{2:MF-GBCS}) is not exactly controllable.
\end{enumerate}
\end{remark}

\subsection{Gram-type criterion}

Similar to the Gram-type criteria for exact controllability of ODE systems, SDE systems (see Liu and Peng \cite{LP-10}) and MF-SDE systems (see Yu \cite{Y-21}), in this subsection, we devote ourselves to giving a Gram-type criterion for the exact controllability of MF-GBCS \eqref{2:MF-GBCS}.

Due to Corollary \ref{3.1C-3E}, we turn to consider the exact null-controllability of the backward system \eqref{3:BS}. Let us define
\begin{equation}\label{3.2:scrX}
\mathscr X := \Big\{ x^*(0; 0, v(\cdot))\, \big|\, v(\cdot) \in L^2_{\mathbb F}(0,T;\mathbb R^{m_0}) \Big\},
\end{equation}
where $\pi(\cdot;0,v(\cdot)) = ( (x^*(\cdot;0,v(\cdot))^\top, y_{-0}(\cdot;0,v(\cdot))^\top)^\top, (q(\cdot;0,v(\cdot))^\top, z_{-0}(\cdot;0,v(\cdot))^\top)^\top )^\top \in \Pi$ is the unique solution to MF-BSDE \eqref{3:BS} and \eqref{3:TC} when $x^*(T) =0$ and $v(\cdot) \in L^2_{\mathbb F}(0,T;\mathbb R^{m_0})$. Due to the linearity of \eqref{3:BS}, the set $\mathscr X \subset \mathbb R^n$ is exactly a subspace which is called the {\it null-controllable subspace of the backward system \eqref{3:BS}}. Clearly, the backward system \eqref{3:BS} is exactly null-controllable if and only if $\mathscr X = \mathbb R^n$.

Now, we introduce an $(n\times n)$ symmetrical matrix
\begin{equation}\label{3.2:G}
\begin{aligned}
\mathbf G = \begin{pmatrix} I_n & 0_{n\times(Nn)} \end{pmatrix} \mathbb E\int_0^T & \Big\{ \Phi(s) \mathbf B(s) +\mathbb E[\Phi(s)] \bar{\mathbf B}(s) \Big\}\\
\times & \Big\{ \Phi(s) \mathbf B(s) +\mathbb E[\Phi(s)] \bar{\mathbf B}(s) \Big\}^\top \, \mathrm ds \begin{pmatrix} I_n \\ 0_{(Nn) \times n}  \end{pmatrix},
\end{aligned}
\end{equation}
where $\Phi(\cdot)$ is the unique solution to the following matrix-valued linear MF-SDE:
\begin{equation}\label{3.2:Found}
\left\{
\begin{aligned}
& \mathrm d\Phi(s) = -\Big\{ \Phi(s) \mathbf A(s) +\mathbb E[\Phi(s)] \bar{\mathbf A}(s) \Big\}\, \mathrm ds\\
& \hskip 13.4mm -\Big\{ \Phi(s) \mathbf C(s) +\mathbb E[\Phi(s)] \bar{\mathbf C}(s) \Big\}\, \mathrm dW(s),\quad s\in [0,T],\\
& \Phi(0) = I_{(1+N)n}.
\end{aligned}
\right.
\end{equation}

Now we give the main result of this subsection:

\begin{theorem}\label{Gram-Cri}
Let Assumptions (H2) and (H4) hold. Then we have 
\begin{equation}\label{3.2T:Gram}
\mathscr X = \mbox{\rm Span}\ \mathbf G,
\end{equation}
where $\mathbf G$ is defined by \eqref{3.2:G} and $\mbox{\rm Span}\ \mathbf G$ represents the space spanned by all column vectors of the matrix $\mathbf G$. Consequently, the backward system \eqref{3:BS} (or MF-GBCS \eqref{2:MF-GBCS}) is exactly controllable if and only if the matrix $\mathbf G$ is non-singular.
\end{theorem}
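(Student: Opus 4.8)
The plan is to prove the set identity $\mathscr X = \mbox{\rm Span}\ \mathbf G$ directly by a two-sided inclusion argument, and then read off the non-singularity criterion as an immediate corollary: since $\mathbf G$ is an $(n \times n)$ symmetric positive semi-definite matrix, $\mbox{\rm Span}\ \mathbf G = \mathbb R^n$ if and only if $\mathbf G$ is non-singular, and by Corollary \ref{3.1C-3E} exact controllability of MF-GBCS \eqref{2:MF-GBCS} is equivalent to exact null-controllability of \eqref{3:BS}, i.e. to $\mathscr X = \mathbb R^n$.

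To compute $x^*(0;0,v(\cdot))$ explicitly I would first solve the backward system \eqref{3:BS} with $x^*(T) = 0$ (so the terminal condition \eqref{3:TC} forces $y_{-0}(T) = 0$, hence $\pi(T) = 0$) using a variation-of-constants formula built from the fundamental solution $\Phi(\cdot)$ of the homogeneous equation \eqref{3.2:Found}. The subtlety is that \eqref{3.2:Found} is a \emph{mean-field} matrix SDE, so $\Phi(s)$ is not the usual stochastic exponential and one must carry along $\mathbb E[\Phi(s)]$ in the adjoint/duality relations; the correct ``resolvent'' pairing is the one already encoded in the integrand $\Phi(s)\mathbf B(s) + \mathbb E[\Phi(s)]\bar{\mathbf B}(s)$ of \eqref{3.2:G}. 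Applying It\^o's formula to $\Phi(s)y(s)$ (together with the companion identity for the expectations, exactly as in \cite{Y-21}), integrating over $[0,T]$ and using $\pi(T)=0$, one obtains a representation of the form $y(0) = -\mathbb E\int_0^T \{\Phi(s)\mathbf B(s) + \mathbb E[\Phi(s)]\bar{\mathbf B}(s)\}\big(\text{something built from }v\big)\,\mathrm ds$, whose first $n$ coordinates give $x^*(0;0,v(\cdot))$. This exhibits $x^*(0;\cdot)$ as a bounded linear map $\mathcal L : L^2_{\mathbb F}(0,T;\mathbb R^{m_0}) \to \mathbb R^n$, so $\mathscr X = \mbox{\rm Range}\ \mathcal L$.

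The inclusion $\mathscr X \subseteq \mbox{\rm Span}\ \mathbf G$ then follows because every vector in the range of $\mathcal L$ is a superposition of the columns of $(I_n\ 0)\{\Phi(s)\mathbf B(s) + \mathbb E[\Phi(s)]\bar{\mathbf B}(s)\}$, and one checks these all lie in $\mbox{\rm Span}\ \mathbf G$; equivalently, if $\xi \in \mathbb R^n$ satisfies $\mathbf G\xi = 0$ then $\langle \xi, \mathbf G\xi\rangle = 0$ forces $(I_n\ 0)\{\Phi(s)\mathbf B(s)+\mathbb E[\Phi(s)]\bar{\mathbf B}(s)\}^\top \xi = 0$ for a.e.\ $s$, $\mathbb P$-a.s., whence $\langle \xi, x^*(0;0,v(\cdot))\rangle = 0$ for all $v$, i.e.\ $\xi \perp \mathscr X$. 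This gives $(\mbox{\rm Span}\ \mathbf G)^\perp = \ker\mathbf G \subseteq \mathscr X^\perp$, hence $\mathscr X \subseteq \mbox{\rm Span}\ \mathbf G$. For the reverse inclusion $\mbox{\rm Span}\ \mathbf G \subseteq \mathscr X$, I would take $\xi \perp \mathscr X$ and show $\mathbf G\xi = 0$: from $\langle\xi, x^*(0;0,v(\cdot))\rangle = 0$ for \emph{all} admissible $v$, choosing $v$ appropriately (e.g.\ $v(s)$ proportional to the relevant projection of $\{\Phi(s)\mathbf B(s)+\mathbb E[\Phi(s)]\bar{\mathbf B}(s)\}^\top\xi$, handling the mean and the mean-zero parts separately because $\mathbf B$ and $\bar{\mathbf B}$ enter through $v-\mathbb E[v]$ and $\mathbb E[v]$) forces the integrand to vanish, hence $\langle\xi,\mathbf G\xi\rangle = 0$ and $\mathbf G\xi = 0$. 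Thus $\mathscr X^\perp \subseteq \ker\mathbf G = (\mbox{\rm Span}\ \mathbf G)^\perp$, completing the equality.

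The main obstacle I anticipate is bookkeeping the mean-field terms correctly in the duality computation: unlike the classical SDE case in \cite{LP-10} or the GBCS case in \cite{LY-22+}, the fundamental matrix $\Phi$ and its expectation $\mathbb E[\Phi]$ both appear, and the control $v$ enters \eqref{3:BS} through the $\mathbf B v + \bar{\mathbf B}\mathbb E[v]$ combination, so the It\^o pairing $\mathrm d(\Phi y)$ produces cross terms between $\Phi\mathbf C + \mathbb E[\Phi]\bar{\mathbf C}$ and $z$ that must cancel precisely against the drift of $\Phi$; getting the ``adjoint'' equation \eqref{3.2:Found} to be exactly the right one (i.e.\ with $-\mathbf A, -\mathbf C$ and the $\mathbb E[\Phi]\bar{\mathbf A}, \mathbb E[\Phi]\bar{\mathbf C}$ corrections) is where the delicate mean-field analysis of \cite{Y-21} has to be reproduced. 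Once the representation formula for $x^*(0;0,v(\cdot))$ is in hand, the rest is the standard finite-dimensional range/kernel duality argument and is routine.
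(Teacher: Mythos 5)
Your proposal is correct and follows essentially the same route as the paper: apply It\^o's formula to $\Phi(\cdot)y(\cdot)$ with $\Phi$ solving \eqref{3.2:Found} to get the representation $-x^*(0;0,v(\cdot)) = \begin{pmatrix} I_n & 0\end{pmatrix}\mathbb E\int_0^T \{\Phi(s)\mathbf B(s)+\mathbb E[\Phi(s)]\bar{\mathbf B}(s)\}v(s)\,\mathrm ds$, then identify $\mathscr X^\perp$ with $\ker \mathbf G$ by testing against suitable controls in one direction and using $\langle\beta,\mathbf G\beta\rangle = \mathbb E\int_0^T |\beta^\top(I_n\ 0)\{\Phi\mathbf B+\mathbb E[\Phi]\bar{\mathbf B}\}|^2\,\mathrm ds \geq 0$ in the other. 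The only simplification worth noting is that the mean-field coupling $\mathbf B v+\bar{\mathbf B}\mathbb E[v]$ collapses under the outer expectation to the single kernel $\Phi\mathbf B+\mathbb E[\Phi]\bar{\mathbf B}$ acting on $v$ itself, so there is no need to treat $v-\mathbb E[v]$ and $\mathbb E[v]$ separately when choosing test controls.
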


\begin{proof}
Clearly, Equation \eqref{3.2T:Gram} is equivalent to the statement: For any $\beta\in \mathbb R^n$, $\beta\perp\mathscr X$ if and only if $\beta\perp \mathbf G$. Next we will prove that this equivalent statement hold true. 

Let $\Phi(\cdot)$ be the solution to MF-SDE \eqref{3.2:Found} and $(y(\cdot)^\top, z(\cdot)^\top)^\top = (y(\cdot;0,v(\cdot))^\top, z(\cdot;0,v(\cdot))^\top)^\top$ be the solution to MF-BSDE \eqref{3:BS} and the  supplementary terminal condition $x^*(T) =0$. Applying It\^o's formula to $\Phi(\cdot)y(\cdot)$ on the interval $[0,T]$ yields
\[
\begin{aligned}
-y(0) =\ & \mathbb E[\Phi(T)y(T)] -\Phi(0)y(0) = \mathbb E \int_0^T \Phi(s) \Big\{ \mathbf B(s)v(s) +\bar{\mathbf B}(s) \mathbb E[v(s)] \Big\}\, \mathrm ds\\
=\ & \mathbb E\int_0^T \Big\{ \Phi(s) \mathbf B(s) +\mathbb E[\Phi(s)] \bar{\mathbf B}(s) \Big\} v(s)\, \mathrm ds.
\end{aligned}
\]
Then,
\[
-x^*(0) = \begin{pmatrix} I & 0 \end{pmatrix} \mathbb E\int_0^T \Big\{ \Phi(s) \mathbf B(s) +\mathbb E[\Phi(s)] \bar{\mathbf B}(s) \Big\} v(s)\, \mathrm ds.
\]
Let $\beta\in \mathbb R^n$. We have
\begin{equation}\label{3.2T:Eq1}
-\beta^\top x^*(0) = \beta^\top \begin{pmatrix} I & 0 \end{pmatrix} \mathbb E\int_0^T \Big\{ \Phi(s) \mathbf B(s) +\mathbb E[\Phi(s)] \bar{\mathbf B}(s) \Big\} v(s)\, \mathrm ds.
\end{equation}

(Necessity). If $\beta \perp \mathscr X$, then $\beta^\top x^*(0) =0$ for any $v(\cdot)\in L^2_{\mathbb F}(0,T;\mathbb R^{m_0})$. Therefore, \eqref{3.2T:Eq1} implies
\[
\beta^\top \begin{pmatrix} I & 0 \end{pmatrix} \mathbb E\int_0^T \Big\{ \Phi(s) \mathbf B(s) +\mathbb E[\Phi(s)] \bar{\mathbf B}(s) \Big\} v(s)\, \mathrm ds =0 \quad \mbox{for any } v(\cdot) \in L^2_{\mathbb F}(0,T;\mathbb R^{m_0}).
\]
By selecting $v(\cdot)$ as each column of the matrix-valued process $\{ \Phi(\cdot) \mathbf B(\cdot) +\mathbb E[\Phi(\cdot)] \bar{\mathbf B}(\cdot) \}^\top$, we obtain
\[
\beta^\top \begin{pmatrix} I & 0 \end{pmatrix} \mathbb E\int_0^T \Big\{ \Phi(s) \mathbf B(s) +\mathbb E[\Phi(s)] \bar{\mathbf B}(s) \Big\} \Big\{ \Phi(s) \mathbf B(s) +\mathbb E[\Phi(s)] \bar{\mathbf B}(s) \Big\}^\top \, \mathrm ds =0.
\]
Consequently,
\[
\beta^\top \begin{pmatrix} I & 0 \end{pmatrix} \mathbb E\int_0^T \Big\{ \Phi(s) \mathbf B(s) +\mathbb E[\Phi(s)] \bar{\mathbf B}(s) \Big\} \Big\{ \Phi(s) \mathbf B(s) +\mathbb E[\Phi(s)] \bar{\mathbf B}(s) \Big\}^\top \, \mathrm ds \begin{pmatrix} I \\ 0 \end{pmatrix} =0,
\]
that is, $\beta^\top \mathbf G =0$ (see the definition \eqref{3.2:G} of $\mathbf G$). We have proved $\beta \perp \mathbf G$.

(Sufficiency). If $\beta \perp \mathbf G$, then $\beta^\top \mathbf G \beta =0$. By the definition \eqref{3.2:G} of $\mathbf G$, we have 
\[
\begin{aligned}
0=\ & \beta^\top \begin{pmatrix} I & 0 \end{pmatrix} \mathbb E\int_0^T \Big\{ \Phi(s) \mathbf B(s) +\mathbb E[\Phi(s)] \bar{\mathbf B}(s) \Big\} \Big\{ \Phi(s) \mathbf B(s) +\mathbb E[\Phi(s)] \bar{\mathbf B}(s) \Big\}^\top \, \mathrm ds \begin{pmatrix} I \\ 0 \end{pmatrix} \beta\\
=\ & \mathbb E\int_0^T \Big| \beta^\top \begin{pmatrix} I & 0 \end{pmatrix} \Big\{ \Phi(s) \mathbf B(s) +\mathbb E[\Phi(s)] \bar{\mathbf B}(s) \Big\} \Big|^2\, \mathrm ds.
\end{aligned}
\]
Therefore,
\[
\beta^\top \begin{pmatrix} I & 0 \end{pmatrix} \Big\{ \Phi(s) \mathbf B(s) +\mathbb E[\Phi(s)] \bar{\mathbf B}(s) \Big\} =0 \quad \mbox{for almost all } (s,\omega) \in [0,T]\times \Omega.
\]
Substituting the above equation into \eqref{3.2T:Eq1} leads to $\beta^\top x^*(0) =0$. From the arbitrariness of $v(\cdot) \in L^2_{\mathbb F}(0,T;\mathbb R^{m_0})$, we get $\beta \perp \mathscr X$. The proof is completed.
\end{proof}


\subsection{Kalman-type criterion for time-invariant coefficients}

This subsection is concerned with a special case where the involved coefficients are time-invariant. In detail, we introduce

\medskip

\noindent{\bf Assumption (H5).} The coefficients $A(\cdot)$, $\bar A(\cdot)$, $B(\cdot)$, $\bar B(\cdot)$, $C(\cdot)$, $\bar C(\cdot)$, $D(\cdot)$, $\bar D(\cdot)$ appearing in \eqref{2:Sys} and $Q_i(\cdot)$, $\bar Q_i(\cdot)$, $R_i(\cdot)$, $\bar R_i(\cdot)$ ($i=1,2,\dots,N$) appearing in \eqref{2:Cost} are time-invariant.

\medskip

\noindent Consequently, in this special case, the coefficients $\mathbf A(\cdot)$, $\bar{\mathbf A}(\cdot)$, $\mathbf B(\cdot)$, $\bar{\mathbf B}(\cdot)$, $\mathbf C(\cdot)$ and $\bar{\mathbf C}(\cdot)$ in the backward system \eqref{3:BS} are also time-invariant.

Under Assumption (H5), in this subsection we will obtain another simpler criterion for the exact controllability of the backward system \eqref{3:BS} (or MF-GBCS \eqref{2:MF-GBCS}), named Kalman-type criterion. The proof of the forthcoming Kalman-type criterion depends mainly on a result in Yu \cite{Y-21}. Next we will state this preliminary result and some relevant notations.

Firstly, in \cite{Y-21}, the author combined the classical product and the tensor product of matrices to define the following new multiplication operation:

\begin{definition}[Definition 5.1 in \cite{Y-21}]
For any given block matrices
\[
\mathbb A = \left( \begin{array}{cccc}
\mathbb A_{11} & \mathbb A_{12} & \cdots & \mathbb A_{1n}\\
\mathbb A_{21} & \mathbb A_{22} & \cdots & \mathbb A_{2n}\\
\vdots & \vdots & \ddots & \vdots\\
\mathbb A_{m1} & \mathbb A_{m2} & \cdots & \mathbb A_{mn}
\end{array} \right) \quad \mbox{and} \quad
\mathbb B = \left( \begin{array}{cccc}
\mathbb B_{11} & \mathbb B_{12} & \cdots & \mathbb B_{1q}\\
\mathbb B_{21} & \mathbb B_{22} & \cdots & \mathbb B_{2q}\\
\vdots & \vdots & \ddots & \vdots\\
\mathbb B_{p1} & \mathbb B_{p2} & \cdots & \mathbb B_{pq}
\end{array} \right),
\]
their {\rm block-tensor product} is defined as follows:
\[
\mathbb A \otimes \mathbb B = \left(\begin{array}{cccccccccc} 
\mathbb A_{11}\mathbb B_{11} & \mathbb A_{11}\mathbb B_{12} & \cdots & \mathbb A_{11}\mathbb B_{1q} & \cdots & \cdots & \mathbb A_{1n}\mathbb B_{11} & \mathbb A_{1n}\mathbb B_{12} & \cdots & \mathbb A_{1n}\mathbb B_{1q}\\
\mathbb A_{11}\mathbb B_{21} & \mathbb A_{11}\mathbb B_{22} & \cdots & \mathbb A_{11}\mathbb B_{2q} & \cdots & \cdots & \mathbb A_{1n}\mathbb B_{21} & \mathbb A_{1n}\mathbb B_{22} & \cdots & \mathbb A_{1n}\mathbb B_{2q}\\
\vdots & \vdots & \ddots & \vdots & & & \vdots & \vdots & \ddots & \vdots\\
\mathbb A_{11}\mathbb B_{p1} & \mathbb A_{11}\mathbb B_{p2} & \cdots & \mathbb A_{11}\mathbb B_{pq} & \cdots & \cdots & \mathbb A_{1n}\mathbb B_{p1} & \mathbb A_{1n}\mathbb B_{p2} & \cdots & \mathbb A_{1n}\mathbb B_{pq}\\
\vdots & \vdots & & \vdots & \ddots &  & \vdots & \vdots & & \vdots\\
\vdots & \vdots & & \vdots &  & \ddots & \vdots & \vdots & & \vdots\\
\mathbb A_{m1}\mathbb B_{11} & \mathbb A_{m1}\mathbb B_{12} & \cdots & \mathbb A_{m1}\mathbb B_{1q} & \cdots & \cdots & \mathbb A_{mn}\mathbb B_{11} & \mathbb A_{mn}\mathbb B_{12} & \cdots & \mathbb A_{mn}\mathbb B_{1q}\\
\mathbb A_{m1}\mathbb B_{21} & \mathbb A_{m1}\mathbb B_{22} & \cdots & \mathbb A_{m1}\mathbb B_{2q} & \cdots & \cdots & \mathbb A_{mn}\mathbb B_{21} & \mathbb A_{mn}\mathbb B_{22} & \cdots & \mathbb A_{mn}\mathbb B_{2q}\\
\vdots & \vdots & \ddots & \vdots & & & \vdots & \vdots & \ddots & \vdots\\
\mathbb A_{m1}\mathbb B_{p1} & \mathbb A_{m1}\mathbb B_{p2} & \cdots & \mathbb A_{m1}\mathbb B_{pq} & \cdots & \cdots & \mathbb A_{mn}\mathbb B_{p1} & \mathbb A_{mn}\mathbb B_{p2} & \cdots & \mathbb A_{mn}\mathbb B_{pq}
\end{array}\right)
\]
provided all the involved classical products $\mathbb A_{ij} \mathbb B_{hl}$ are well-posed ($i=1,2,\dots,m$; $j=1,2,\dots,n$; $h=1,2,\dots,p$; $l=1,2,\dots, q$). 
\end{definition}

\noindent The above defined block-tensor product obeys the following combination rule: 
\[
(\mathbb A \otimes \mathbb B) \otimes \mathbb C = \mathbb A \otimes (\mathbb B \otimes \mathbb C) =: \mathbb A \otimes \mathbb B \otimes \mathbb C.
\]
Consequently, the following notation is unambiguous:
\[
\overbrace{\mathbb A \otimes \mathbb A \otimes \cdots \otimes \mathbb A}^{\text{the number is } k} =: \mathbb A^{\otimes k}.
\]

Secondly, based on the coefficients $\mathbf A$, $\bar{\mathbf A}$, $\mathbf B$, $\bar{\mathbf B}$, $\mathbf C$ and $\bar{\mathbf C}$ in the backward system \eqref{3:BS}, we define 
\begin{equation}\label{3.3:D1D2}
\mathcal D_1 = \widehat{\mathbf B}, \qquad \mathcal D_2 = \begin{pmatrix} \widehat{\mathbf A}\widehat{\mathbf B}, & \widehat{\mathbf C} \mathbf B \end{pmatrix}
\end{equation}
and
\begin{equation}\label{3.3:Dk}
\mathcal D_k = \left\{
\begin{aligned}
& \begin{pmatrix} \widehat{\mathbf A} \otimes \mathcal D_{k-1}, & \widehat{\mathbf C} \otimes \begin{pmatrix} \mathbf A, & \mathbf C \end{pmatrix}^{\otimes (k-2)} \otimes \mathbf B \end{pmatrix}, \quad k=3,4,\dots,(1+N)n+1,\\
& \left( \begin{matrix} 
\widehat{\mathbf A}^{k-1} \widehat{\mathbf B}, &  
\widehat{\mathbf A}^{k-2} \widehat{\mathbf C} \mathbf B, &
\widehat{\mathbf A}^{k-3} \widehat{\mathbf C} \otimes \begin{pmatrix} \mathbf A, & \mathbf C \end{pmatrix}\otimes \mathbf B, & \cdots,
\end{matrix} \right.\\
& \left. \begin{matrix}
\quad \widehat{\mathbf A}^{k-(1+N)n-1} \widehat{\mathbf C} \otimes \begin{pmatrix} \mathbf A, & \mathbf C \end{pmatrix}^{\otimes ((1+N)n-1)}\otimes \mathbf B
\end{matrix} \right), \quad k=(1+N)n+2,\dots,2(1+N)n.
\end{aligned}
\right.
\end{equation}

Thirdly, similar to \eqref{3.2:scrX}, we also introduce
\begin{equation}\label{3.3:scrY}
\mathscr Y := \Big\{ y(0; 0, v(\cdot))\, \big|\, v(\cdot) \in L^2_{\mathbb F}(0,T;\mathbb R^{m_0}) \Big\},
\end{equation}
where $\pi(\cdot;0,v(\cdot)) = ( y(\cdot;0,v(\cdot))^\top, z(\cdot;0,v(\cdot))^\top )^\top \in \Pi$ is the unique solution to MF-BSDE \eqref{3:BS} and \eqref{3:TC} when $x^*(T) =0$ and $v(\cdot) \in L^2_{\mathbb F}(0,T;\mathbb R^{m_0})$. Clearly, $\mathscr Y$ is a subspace of $\mathbb R^{(1+N)n}$.

\begin{lemma}[Theorem 5.9 in \cite{Y-21}]\label{3.3Lem-Yu}
Under Assumptions (H2), (H4) and (H5), we have
\begin{equation}
\mathscr Y = \mbox{\rm Span}\ \begin{pmatrix} \mathcal D_1 & \mathcal D_2 & \cdots & \mathcal D_{2(1+N)n} \end{pmatrix},
\end{equation}
where $\mathscr Y$ and $\mathcal D_k$ ($k=1,2,\dots,2(1+N)n$) are defined by \eqref{3.3:scrY}, \eqref{3.3:D1D2} and \eqref{3.3:Dk}, respectively.
\end{lemma}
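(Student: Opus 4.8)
The plan is to reduce the statement, by the same It\^o-duality computation used in the proof of Theorem~\ref{Gram-Cri}, to a purely algebraic fact about the constant matrices $\mathbf A,\bar{\mathbf A},\mathbf B,\bar{\mathbf B},\mathbf C,\bar{\mathbf C}$, and then to identify that fact with Theorem~5.9 of \cite{Y-21}. Concretely, applying It\^o's formula to $\Phi(\cdot)y(\cdot)$ on $[0,T]$ — where $\Phi$ solves \eqref{3.2:Found} and $y(\cdot)=y(\cdot;0,v(\cdot))$ solves \eqref{3:BS} with the supplementary terminal condition $x^*(T)=0$ (so $y(T)=0$) — gives, exactly as in the Gram-type proof but now retaining all $(1+N)n$ coordinates of $y(0)$,
\[
-y(0)=\mathbb E\int_0^T\Big\{\Phi(s)\mathbf B+\mathbb E[\Phi(s)]\bar{\mathbf B}\Big\}v(s)\,\mathrm ds .
\]
Thus $\mathscr Y$ is the range of $v(\cdot)\mapsto-\mathbb E\int_0^T\{\Phi\mathbf B+\mathbb E[\Phi]\bar{\mathbf B}\}v\,\mathrm ds$, and a vector $\beta\in\mathbb R^{(1+N)n}$ is orthogonal to $\mathscr Y$ if and only if $\beta^\top\{\Phi(s)\mathbf B+\mathbb E[\Phi(s)]\bar{\mathbf B}\}=0$ for almost every $(s,\omega)$. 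It therefore suffices to show that this vanishing condition is equivalent to $\beta^\top\mathcal D_k=0$ for all $k=1,\dots,2(1+N)n$, with $\mathcal D_k$ as in \eqref{3.3:D1D2}--\eqref{3.3:Dk}.

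For this algebraic equivalence I would split $\Phi$ into its mean and fluctuation, $\Phi=\mathbb E[\Phi]+\widetilde\Phi$. Taking expectations in \eqref{3.2:Found} and using time-invariance yields $\mathbb E[\Phi(s)]=e^{-s\widehat{\mathbf A}}$, while $\widetilde\Phi$ solves $\mathrm d\widetilde\Phi=-\widetilde\Phi\mathbf A\,\mathrm ds-(\widetilde\Phi\mathbf C+\mathbb E[\Phi]\widehat{\mathbf C})\,\mathrm dW$ with $\widetilde\Phi(0)=0$. Since $\Phi\mathbf B+\mathbb E[\Phi]\bar{\mathbf B}=\widetilde\Phi\mathbf B+e^{-s\widehat{\mathbf A}}\widehat{\mathbf B}$ and its two summands are $L^2$-orthogonal (one deterministic, the other mean-zero, so each must vanish separately), the condition decouples into (a) $\beta^\top e^{-s\widehat{\mathbf A}}\widehat{\mathbf B}=0$ for a.e.\ $s$, and (b) $\beta^\top\widetilde\Phi(s)\mathbf B=0$ a.s.\ for a.e.\ $s$. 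Condition (a), by real-analyticity in $s$ and the Cayley--Hamilton theorem for $\widehat{\mathbf A}$, is equivalent to $\beta^\top\widehat{\mathbf A}^{\,j}\widehat{\mathbf B}=0$ for $j=0,\dots,(1+N)n-1$, i.e.\ to $\beta$ annihilating the $\widehat{\mathbf A}^{\,k-1}\widehat{\mathbf B}$ blocks of $\mathcal D_1,\dots,\mathcal D_{(1+N)n}$.

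Condition (b) I would unwind by iterated application of It\^o's formula: a continuous semimartingale that vanishes for a.e.\ $s$ vanishes identically, so both its drift and its diffusion coefficient vanish. Applying this to $s\mapsto\beta^\top\widetilde\Phi(s)M$, starting from $M=\mathbf B$, the drift forces $\beta^\top\widetilde\Phi(s)\mathbf A M=0$; the diffusion coefficient $\beta^\top\widetilde\Phi(s)\mathbf C M+\beta^\top e^{-s\widehat{\mathbf A}}\widehat{\mathbf C}M$, split once more into its mean-zero and deterministic parts, forces $\beta^\top\widetilde\Phi(s)\mathbf C M=0$ and $\beta^\top e^{-s\widehat{\mathbf A}}\widehat{\mathbf C}M=0$ (hence $\beta^\top\widehat{\mathbf A}^{\,j}\widehat{\mathbf C}M=0$ for all $j$). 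Iterating over all ``words'' $M=W\mathbf B$ in the letters $\mathbf A$ and $\mathbf C$ reproduces exactly the families $\widehat{\mathbf A}^{\,j}\widehat{\mathbf C}\otimes(\mathbf A,\mathbf C)^{\otimes m}\otimes\mathbf B$; the block-tensor product is engineered precisely to store all such words while respecting the linear algebra of reachability, which is why \eqref{3.3:Dk} is the correct bookkeeping device.

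The delicate point — and the step I expect to be the main obstacle — is the truncation: proving that only the first $2(1+N)n$ blocks are needed, i.e.\ $\mathrm{Span}\,(\mathcal D_1,\dots,\mathcal D_{2(1+N)n})=\mathrm{Span}\,(\mathcal D_1,\mathcal D_2,\dots)$. The naive argument ``the span can strictly increase at most $(1+N)n$ times since it lies in $\mathbb R^{(1+N)n}$'' does not close, because the recursion \eqref{3.3:Dk} intertwines the $\widehat{\mathbf A}$-generated directions with the tensor-enlarged word directions; one must handle the $\widehat{\mathbf A}$-orbit (at most $(1+N)n$ steps by Cayley--Hamilton) and the $\widehat{\mathbf C}$-fed word part (again at most $(1+N)n$ steps once the relevant invariant subspace has stabilized) separately, which is where the factor $2$ originates. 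Carrying out this stabilization argument in full is precisely Theorem~5.9 of \cite{Y-21}; since the backward system \eqref{3:BS} under $x^*(T)=0$ is a backward mean-field SDE control system of exactly the form treated there, with coefficient matrices $(\mathbf A,\bar{\mathbf A},\mathbf B,\bar{\mathbf B},\mathbf C,\bar{\mathbf C})$, one may simply invoke that theorem to finish.
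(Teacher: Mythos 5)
The paper gives no proof of this lemma at all --- it is imported verbatim as Theorem 5.9 of \cite{Y-21} --- and your proposal, after a sound duality reduction mirroring the Gram-criterion argument (It\^o's formula applied to $\Phi(\cdot)y(\cdot)$, orthogonality of $\beta$ to $\mathscr Y$, the splitting $\Phi=\mathbb E[\Phi]+\widetilde\Phi$ with $\mathbb E[\Phi(s)]=e^{-s\widehat{\mathbf A}}$), likewise closes by invoking that same external theorem, so the two approaches coincide where it matters. Your sketch of the underlying mechanism and your identification of the truncation to the first $2(1+N)n$ blocks as the genuinely delicate step are both accurate, but since the paper itself treats this as a black-box citation, no more is required.
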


We are in the position to give the main result of this subsection:

\begin{theorem}\label{Kalman-Cri}
Let Assumptions (H2), (H4) and (H5) hold. Then we have 
\begin{equation}\label{3.3T:Kal}
\mathscr X = \mbox{\rm Span}\ \Big\{ \begin{pmatrix} I_n & 0_{n\times (Nn)} \end{pmatrix} 
\begin{pmatrix} \mathcal D_1 & \mathcal D_2 & \cdots & \mathcal D_{2(1+N)n} \end{pmatrix} \Big\},
\end{equation}
where $\mathcal D_k$ ($k=1,2,\dots,2(1+N)n$) is defined by \eqref{3.3:D1D2} and \eqref{3.3:Dk}. Consequently, the backward system \eqref{3:BS} (or MF-GBCS \eqref{2:MF-GBCS}) is exactly controllable if and only if 
\begin{equation}
\mbox{\rm Rank}\ \Big\{ \begin{pmatrix} I_n & 0_{n\times (Nn)} \end{pmatrix} 
\begin{pmatrix} \mathcal D_1 & \mathcal D_2 & \cdots & \mathcal D_{2(1+N)n} \end{pmatrix} \Big\} =n.
\end{equation}
\end{theorem}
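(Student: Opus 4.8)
The plan is to deduce the criterion directly from Lemma~\ref{3.3Lem-Yu} by projecting the null-controllable subspace $\mathscr Y\subset\mathbb R^{(1+N)n}$ of the full backward state onto its first $n$ coordinates. First I would record the elementary identity linking $\mathscr X$ and $\mathscr Y$. By the decomposition $y(\cdot)=\begin{pmatrix} x^*(\cdot)^\top & y_{-0}(\cdot)^\top\end{pmatrix}^\top$ from \eqref{3:N4}, for every $v(\cdot)\in L^2_{\mathbb F}(0,T;\mathbb R^{m_0})$ the solution of MF-BSDE \eqref{3:BS}, \eqref{3:TC} with $x^*(T)=0$ satisfies
\[
x^*(0;0,v(\cdot)) = \begin{pmatrix} I_n & 0_{n\times(Nn)} \end{pmatrix} y(0;0,v(\cdot)).
\]
Comparing the definitions \eqref{3.2:scrX} and \eqref{3.3:scrY}, this yields $\mathscr X = \begin{pmatrix} I_n & 0_{n\times(Nn)} \end{pmatrix}\mathscr Y$, both sides being linear subspaces (of $\mathbb R^n$ and $\mathbb R^{(1+N)n}$ respectively) by the linearity of the backward system, as already noted after \eqref{3.2:scrX} and \eqref{3.3:scrY}.

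Second, I would use the basic fact that, for a matrix $P$ acting by left multiplication and a matrix $M$ whose columns span a subspace, $P\,(\mbox{\rm Span}\ M)=\mbox{\rm Span}\ (PM)$. Taking $P=\begin{pmatrix} I_n & 0_{n\times(Nn)} \end{pmatrix}$ and $M=\begin{pmatrix} \mathcal D_1 & \mathcal D_2 & \cdots & \mathcal D_{2(1+N)n}\end{pmatrix}$ and invoking Lemma~\ref{3.3Lem-Yu} (which holds under Assumptions (H2), (H4) and (H5)), we obtain
\[
\mathscr X = \mbox{\rm Span}\ \Big\{ \begin{pmatrix} I_n & 0_{n\times(Nn)} \end{pmatrix}\begin{pmatrix} \mathcal D_1 & \mathcal D_2 & \cdots & \mathcal D_{2(1+N)n}\end{pmatrix}\Big\},
\]
which is precisely \eqref{3.3T:Kal}. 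For the ``consequently'' part, since $\mathscr X$ now equals the column space of the displayed $n$-row matrix, $\mathscr X=\mathbb R^n$ holds if and only if that matrix has rank $n$; and by Corollary~\ref{3.1C-3E}, MF-GBCS \eqref{2:MF-GBCS} is exactly controllable if and only if the backward system \eqref{3:BS} is exactly null-controllable, i.e. if and only if $\mathscr X=\mathbb R^n$. Combining these gives the rank criterion.

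The bulk of the work is already packaged in Lemma~\ref{3.3Lem-Yu} (Theorem~5.9 of \cite{Y-21}), which expresses $\mathscr Y$ through the block-tensor-product matrices $\mathcal D_k$, so I do not expect a serious obstacle; the only genuinely new ingredient is the projection step $\mathscr X = \begin{pmatrix} I_n & 0_{n\times(Nn)} \end{pmatrix}\mathscr Y$. The main thing to be careful about is the bookkeeping: verifying that the $\mathscr Y$ in Lemma~\ref{3.3Lem-Yu} is attached to exactly the same backward system \eqref{3:BS} and the same control space $L^2_{\mathbb F}(0,T;\mathbb R^{m_0})$ used to define $\mathscr X$, that the block array $\begin{pmatrix} \mathcal D_1 & \cdots & \mathcal D_{2(1+N)n}\end{pmatrix}$ indeed has $(1+N)n$ rows so that left multiplication by $\begin{pmatrix} I_n & 0_{n\times(Nn)}\end{pmatrix}$ is meaningful, and that no columns are lost in that multiplication. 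An alternative but slightly longer route would start from the Gram-type criterion of Theorem~\ref{Gram-Cri} and identify $\mbox{\rm Span}\ \mathbf G$ with $\begin{pmatrix} I_n & 0_{n\times(Nn)}\end{pmatrix}\mathscr Y$; the route through Lemma~\ref{3.3Lem-Yu} is the most direct.
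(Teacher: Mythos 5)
Your proposal is correct and follows exactly the paper's route: the paper's proof likewise observes that $x^*(0;0,v(\cdot)) = \begin{pmatrix} I_n & 0_{n\times (Nn)} \end{pmatrix} y(0;0,v(\cdot))$ and derives \eqref{3.3T:Kal} directly from Lemma \ref{3.3Lem-Yu}, with the rank criterion following from Corollary \ref{3.1C-3E}. Your version merely spells out the span-projection and bookkeeping details that the paper leaves implicit.
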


\begin{proof}
We notice that $x^*(0;0,v(\cdot)) = \begin{pmatrix} I_n & 0_{n\times (Nn)} \end{pmatrix} y(0;0,v(\cdot))$ holds for any $v(\cdot) \in L^2_{\mathbb F}(0,T;\mathbb R^{m_0})$. Then, \eqref{3.3T:Kal} is directly derived from Lemma \ref{3.3Lem-Yu}.
\end{proof}

Theorem \ref{Kalman-Cri} provides a rank condition for the exact controllability of the backward system \eqref{3:BS} (or MF-GBCS \eqref{2:MF-GBCS}), which we call Kalman-type criterion. Next, we give an example to show the application of the Kalman-type criterion.

\begin{example}
We assume that there are two agents at the lower layer, i.e. $N=2$. Let the dimension of the state $n=2$, the dimensions of Agents' controls $m_1=m_2=1$ and the dimension of the regulator at the upper layer $m_0 =3$. The related coefficients in the system \eqref{2:Sys} and the cost functionals \eqref{2:Cost} are set as follows:
\[
\begin{aligned}
& C = -A = I_2,\quad 
D_1 = \bar D_2 = 2B_1 = 2\bar B_2 = \begin{pmatrix} 2 \\ 0 \end{pmatrix},\quad 
\bar D_1 = D_2 = 2\bar B_1 = 2B_2 = \begin{pmatrix} 0 \\ 2 \end{pmatrix},\\
& \bar C = \bar A = 0_{2\times 2},\quad
\bar D_0 =\bar B_0 = 0_{2\times 3},\quad
D_0 = \begin{pmatrix} 1 & 1 & 0 \\ 1 & 1 & 1 \end{pmatrix},\quad
B_0 = \begin{pmatrix} 1 & 1 & 0 \\ 1 & 0 & 1 \end{pmatrix},\\
& \bar Q_2 = 2Q_1 = \begin{pmatrix} 2 & 0 \\ 0 & 0 \end{pmatrix},\quad
Q_2 =2\bar Q_1 = \begin{pmatrix} 0 & 0 \\ 0 & 2 \end{pmatrix},\quad 
R_2 =\bar R_2 =2R_1 =2\bar R_1 =2.
\end{aligned}
\]
Since the terminal coefficients $H_1$, $\bar H_1$, $H_2$ and $\bar H_2$ do not affect the exact controllability (see Remark \ref{3.1:RMK}-(i)), then we do not give their specific setting here. Based on the above setting, we calculate the related coefficients in the backward system \eqref{3:BS}:
\[
\begin{aligned}
& \mathbf A = \begin{pmatrix} 
\begin{matrix} -2 & 0 \\ \frac 1 2 & -2 \end{matrix} &
\begin{matrix} 1 & 0 & 0 & 0 \\ -1 & 0 & 0 & \frac 1 2 \end{matrix}\\
\begin{matrix} -1 & 0 \\ 0 & 0 \\ 0 & 0 \\ 0 & -2 \end{matrix} & 
I_4
\end{pmatrix},\quad
\bar{\mathbf A} = \begin{pmatrix}
0_{2\times 2} & 
\begin{matrix} -\frac 1 2 & \frac 1 2 & \frac 1 4 & \frac 1 4 \\
1 & 0 & 0 & -\frac 1 2 \end{matrix}\\
\begin{matrix} 0 & 0 \\ 0 & -1 \\ -2 & 0 \\ 0 & 0 \end{matrix} &
0_{4\times 4}
\end{pmatrix},\quad 
\bar{\mathbf B} = 0_{6\times 3},\\
& \mathbf C = \begin{pmatrix}
\begin{matrix} 1 & 0 \\ -\frac 1 2 & 1 \end{matrix} &
\begin{matrix} 2 & 0 & 0 & 0 \\ -2 & 0 & 0 & 1 \end{matrix} \\
0_{4\times 2} & -I_4
\end{pmatrix},\quad
\bar{\mathbf C} = \begin{pmatrix}
\begin{matrix} 0_{2\times 2} & 
\begin{matrix} -1 & 1 & \frac 1 2 & \frac 1 2 \\ 2 & 0 & 0 & -1 \end{matrix} \end{matrix}\\
0_{4\times 6}
\end{pmatrix},\quad
\mathbf B = \begin{pmatrix}
\begin{matrix} 0 & 0 & 0 \\ \frac 1 2 & -\frac 1 2 & 0 \end{matrix} \\
0_{4\times 3}
\end{pmatrix}.
\end{aligned}
\]

Due to Theorem \ref{Kalman-Cri}, we need to verify whether $\mbox{Rank}\ \{ \begin{pmatrix} I_2 & 0_{2\times 4} \end{pmatrix} 
\begin{pmatrix} \mathcal D_1 & \mathcal D_2 & \cdots & \mathcal D_{12} \end{pmatrix} \} =2$ is true. For this aim, we calculate
\[
\mathcal D_3 = \begin{pmatrix}  
\widehat{\mathbf A} \widehat{\mathbf A} \widehat{\mathbf B}, &
\widehat{\mathbf A} \widehat{\mathbf C} \mathbf B, &
\widehat{\mathbf C} \mathbf{AB}, &
\widehat{\mathbf C} \mathbf{CB}
\end{pmatrix}
\]
and
\[
\begin{pmatrix} I_2 & 0_{2\times 4} \end{pmatrix} \widehat{\mathbf A} \widehat{\mathbf A} \widehat{\mathbf B} 
= \begin{pmatrix} -\frac 1 2 & \frac 1 2 & 0 \\ 2 & -2 & 0 \end{pmatrix},\quad 
\begin{pmatrix} I_2 & 0_{2\times 4} \end{pmatrix}\widehat{\mathbf A} \widehat{\mathbf C} \mathbf B 
= \begin{pmatrix} 0 & 0 & 0 \\ -1 & 1 & 0 \end{pmatrix}.
\]
Then,
\[
\begin{aligned}
2 \geq\ & \mbox{Rank}\ \Big\{ \begin{pmatrix} I_2 & 0_{2\times 4} \end{pmatrix} 
\begin{pmatrix} \mathcal D_1 & \mathcal D_2 & \cdots & \mathcal D_{12} \end{pmatrix} \Big\}\\
\geq\ & \mbox{Rank}\ \Big\{ \begin{pmatrix} I_2 & 0_{2\times 4} \end{pmatrix} 
\begin{pmatrix} \widehat{\mathbf A} \widehat{\mathbf A} \widehat{\mathbf B}, &
\widehat{\mathbf A} \widehat{\mathbf C} \mathbf B \end{pmatrix} \Big\}
=2.
\end{aligned}
\]
Therefore, the MF-GBCS considered in this example is exactly controllable.
\end{example}

We further introduce

\medskip

\noindent{\bf Assumption (H6).} The coefficients $\bar A(\cdot) =\bar C(\cdot) =0$, $\bar B(\cdot) = \bar D(\cdot) =0$, $\bar Q_i(\cdot) =0$ and $\bar R_i(\cdot) =0$ ($i=1,2,\dots,N$).

\medskip

\noindent Clearly, Assumption (H6) implies that $\bar{\mathbf A}(\cdot) =\bar{\mathbf C}(\cdot) =0$ and $\bar{\mathbf B}(\cdot) =0$. We notice that, on the issue of exact controllability, Assumption (H6) means that no mean-field item is involved in the system (see Remark \ref{3.1:RMK}-(i)). The following corollary improved the result of Theorem 3.14 in Zhang and Guo \cite{ZhG-19SICON}.

\begin{corollary}
Let Assumptions (H2), (H4), (H5) and (H6) hold. Then we have
\begin{equation}
\mathscr X = \mbox{\rm Span}\ \Big\{ \begin{pmatrix} I_n & 0_{n\times (Nn)} \end{pmatrix} 
\Big( \begin{matrix} \mathbf B, & \begin{pmatrix} \mathbf A, & \mathbf C \end{pmatrix} \otimes \mathbf B, & \dots, & \begin{pmatrix} \mathbf A, & \mathbf C \end{pmatrix}^{\otimes ((1+N)n -1)} \otimes \mathbf B \end{matrix} \Big) \Big\}.
\end{equation} 
Consequently, the backward system \eqref{3:BS} (or MF-GBCS \eqref{2:MF-GBCS}) is exactly controllable if and only if 
\begin{equation}
\mbox{\rm Rank}\ \Big\{ \begin{pmatrix} I_n & 0_{n\times (Nn)} \end{pmatrix} 
\Big( \begin{matrix} \mathbf B, & \begin{pmatrix} \mathbf A, & \mathbf C \end{pmatrix} \otimes \mathbf B, & \dots, & \begin{pmatrix} \mathbf A, & \mathbf C \end{pmatrix}^{\otimes ((1+N)n -1)} \otimes \mathbf B \end{matrix} \Big) \Big\} =n.
\end{equation}
\end{corollary}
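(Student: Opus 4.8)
The plan is to derive this corollary directly from Theorem~\ref{Kalman-Cri}: under Assumption (H6) all the barred coefficients vanish, so the matrices $\mathcal D_k$ collapse into lists of ``words'' in $\mathbf A$ and $\mathbf C$ applied to $\mathbf B$, and then a Cayley--Hamilton/Kalman stabilization argument discards all the high-order blocks $\mathcal D_{(1+N)n+1},\dots,\mathcal D_{2(1+N)n}$.

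First I would record that Assumption (H6) gives $\bar{\mathbf A}=\bar{\mathbf C}=0$ and $\bar{\mathbf B}=0$, hence by the convention \eqref{3:N1} one has $\widehat{\mathbf A}=\mathbf A$, $\widehat{\mathbf C}=\mathbf C$, $\widehat{\mathbf B}=\mathbf B$. Substituting this into \eqref{3.3:D1D2}--\eqref{3.3:Dk}, one gets $\mathcal D_1=\mathbf B$, $\mathcal D_2=(\mathbf A\mathbf B,\ \mathbf C\mathbf B)=(\mathbf A,\mathbf C)\otimes\mathbf B$, and, by induction on $k$ using the associativity of the block-tensor product together with the elementary identity $(\mathbf A\otimes X,\ \mathbf C\otimes X)=(\mathbf A,\mathbf C)\otimes X$,
\[
\mathcal D_k=(\mathbf A,\mathbf C)^{\otimes(k-1)}\otimes\mathbf B ,\qquad k=2,3,\dots,(1+N)n+1 ,
\]
i.e.\ the columns of $\mathcal D_k$ are precisely those of all the matrices $\mathbf M_1\mathbf M_2\cdots\mathbf M_{k-1}\mathbf B$ with $\mathbf M_j\in\{\mathbf A,\mathbf C\}$. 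For $k=(1+N)n+2,\dots,2(1+N)n$ one checks directly from \eqref{3.3:Dk} (again using $\widehat{\mathbf A}=\mathbf A$, $\widehat{\mathbf C}=\mathbf C$, $\widehat{\mathbf B}=\mathbf B$) that every block of $\mathcal D_k$ has the form $\mathbf A^{k-1}\mathbf B$ or $\mathbf A^{k-1-l}\mathbf C\mathbf M_1\cdots\mathbf M_{l-1}\mathbf B$, which is again a word of length exactly $k-1$ in $\{\mathbf A,\mathbf C\}$ applied to $\mathbf B$.

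Next I would prune the redundant blocks. Put $V_j:=\mbox{Span}\{\text{columns of }\mathbf M_1\cdots\mathbf M_{j'}\mathbf B:\ j'\le j,\ \mathbf M_i\in\{\mathbf A,\mathbf C\}\}\subseteq\mathbb R^{(1+N)n}$. One has $V_{j+1}=V_j+\mathbf A V_j+\mathbf C V_j$, so as soon as $V_j=V_{j+1}$ the chain is stationary; since $V_0=\mbox{col}(\mathbf B)$ has dimension at least $1$ (the case $\mathbf B=0$ being trivial) and $V_j\subseteq\mathbb R^{(1+N)n}$, the chain must stabilize by index $(1+N)n-1$, that is $V_j=V_{(1+N)n-1}$ for all $j\ge(1+N)n-1$. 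By the previous step $\mbox{Span}(\mathcal D_1,\dots,\mathcal D_{(1+N)n})=V_{(1+N)n-1}$, while for $k\ge(1+N)n+1$ every column of $\mathcal D_k$ lies in $V_{k-1}=V_{(1+N)n-1}$. Hence
\[
\mbox{Span}\big(\mathcal D_1,\dots,\mathcal D_{2(1+N)n}\big)=\mbox{Span}\big(\mathcal D_1,\dots,\mathcal D_{(1+N)n}\big)=\mbox{Span}\Big(\mathbf B,\ (\mathbf A,\mathbf C)\otimes\mathbf B,\ \dots,\ (\mathbf A,\mathbf C)^{\otimes((1+N)n-1)}\otimes\mathbf B\Big).
\]

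Finally, multiplying this identity of subspaces on the left by $\begin{pmatrix}I_n & 0_{n\times(Nn)}\end{pmatrix}$ and invoking Theorem~\ref{Kalman-Cri} yields the asserted formula for $\mathscr X$; the rank criterion then follows because the backward system \eqref{3:BS}, equivalently MF-GBCS \eqref{2:MF-GBCS} by Corollary~\ref{3.1C-3E}, is exactly controllable if and only if $\mathscr X=\mathbb R^n$, i.e.\ if and only if the generating matrix has full row rank $n$. The only step that requires genuine care is the bookkeeping in the first step: one must verify that the block-tensor powers $(\mathbf A,\mathbf C)^{\otimes j}$ really enumerate all length-$j$ words in $\mathbf A$ and $\mathbf C$ and that the recursion \eqref{3.3:Dk} collapses consistently once the barred coefficients are set to zero. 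The stabilization argument in the second step is the classical Kalman/Cayley--Hamilton reasoning, here applied to the pair $(\mathbf A,\mathbf C)$, and presents no real difficulty.
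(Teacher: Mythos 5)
Your proof is correct and follows essentially the same route as the paper: collapse $\mathcal D_k$ to $(\mathbf A,\mathbf C)^{\otimes(k-1)}\otimes\mathbf B$ under (H6), stabilize the chain of spans at index $(1+N)n-1$, and invoke Theorem \ref{Kalman-Cri}. The only difference is that you prove the stabilization step directly via the standard increasing-subspace (Cayley--Hamilton/Kalman) argument for the pair $(\mathbf A,\mathbf C)$, whereas the paper cites it from L\"u--Zhang (Theorem 6.1) and Yu (Lemma 5.8); your self-contained argument is the standard proof of that cited lemma and is sound.
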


\begin{proof}
By some careful calculations under Assumption (H6), we have
\[
\mathcal D_k = \begin{pmatrix} \mathbf A, & \mathbf C \end{pmatrix}^{\otimes (k-1)} \otimes \mathbf B, \qquad k=1,2,\dots,(1+N)n+1
\]
and
\[
\mbox{Span}\ \mathcal D_k \subset \mbox{Span}\ \Big\{ \begin{pmatrix} \mathbf A, & \mathbf C \end{pmatrix}^{\otimes (k-1)} \otimes \mathbf B \Big\}, \qquad k=(1+N)n+2,\dots,2(1+N)n
\]
(see \eqref{3.3:D1D2} and \eqref{3.3:Dk} in this subsection and (5.5) in \cite{Y-21}). We notice that, Theorem 6.1 in L\"u and Zhang \cite{LZh-BOOK} (see also Lemma 5.8 in \cite{Y-21} restated by the block-tensor product of matrices) shows 
\[
\begin{aligned}
& \mbox{Span}\ \Big( \begin{matrix} \mathbf B, & \begin{pmatrix} \mathbf A, & \mathbf C \end{pmatrix} \otimes \mathbf B, & \dots, & \begin{pmatrix} \mathbf A, & \mathbf C \end{pmatrix}^{\otimes k} \otimes \mathbf B, & \dots \end{matrix} \Big)\\
=\ & \mbox{Span}\ \Big( \begin{matrix} \mathbf B, & \begin{pmatrix} \mathbf A, & \mathbf C \end{pmatrix} \otimes \mathbf B, & \dots, & \begin{pmatrix} \mathbf A, & \mathbf C \end{pmatrix}^{\otimes ((1+N)n-1)} \otimes \mathbf B \end{matrix} \Big).
\end{aligned}
\]
Then,
\[
\begin{aligned}
& \mbox{Span}\ \begin{pmatrix} \mathcal D_1 & \mathcal D_2 & \cdots & \mathcal D_{2(1+N)n} \end{pmatrix}\\
=\ & \mbox{Span}\ \Big( \begin{matrix} \mathbf B, & \begin{pmatrix} \mathbf A, & \mathbf C \end{pmatrix} \otimes \mathbf B, & \dots, & \begin{pmatrix} \mathbf A, & \mathbf C \end{pmatrix}^{\otimes ((1+N)n-1)} \otimes \mathbf B \end{matrix} \Big).
\end{aligned}
\]
Consequently,
\[
\begin{aligned}
& \mbox{Span}\ \Big\{ \begin{pmatrix} I_n & 0_{n\times (Nn)} \end{pmatrix} \begin{pmatrix} \mathcal D_1 & \mathcal D_2 & \cdots & \mathcal D_{2(1+N)n} \end{pmatrix} \Big\}\\
=\ & \mbox{Span}\ \Big\{ \begin{pmatrix} I_n & 0_{n\times (Nn)} \end{pmatrix} \Big( \begin{matrix} \mathbf B, & \begin{pmatrix} \mathbf A, & \mathbf C \end{pmatrix} \otimes \mathbf B, & \dots, & \begin{pmatrix} \mathbf A, & \mathbf C \end{pmatrix}^{\otimes ((1+N)n-1)} \otimes \mathbf B \end{matrix} \Big)\Big\}.
\end{aligned}
\]
Finally, Theorem \ref{Kalman-Cri} works to finish the proof.
\end{proof}

\subsection{Exact observability of dual system}

In this subsection, we will adopt the dual point of view to provide an equivalent way to judge the exact controllability of the system \eqref{3:BS} and $x^*(T) =0$, i.e. to judge the exact observability of a certain dual system.

As the beginning, let us consider the solution of MF-BSDE \eqref{3:BS} and \eqref{3:TC}. We introduce two operators $\mathbb K: L^2_{\mathbb F}(0,T;\mathbb R^{m_0}) \rightarrow \mathbb R^n$ and $\mathbb L: L^2_{\mathcal F_T}(\Omega;\mathbb R^n) \rightarrow \mathbb R^n$ to separate the influence of $v(\cdot)$ and $x_T$ as follows:
\begin{equation}\label{3.4:KL}
\left\{
\begin{aligned}
& \mathbb Kv(\cdot) = x^*(0;0,v(\cdot)) \quad \mbox{for any } v(\cdot) \in L^2_{\mathbb F}(0,T;\mathbb R^{m_0}),\\
& \mathbb Lx_T = x^*(0;x_T,0) \quad \mbox{for any } x_T\in L^2_{\mathcal F_T}(\Omega;\mathbb R^n),
\end{aligned}
\right.
\end{equation}
where $\pi(\cdot;x_T,v(\cdot)) = ( (x^*(\cdot;x_T,v(\cdot))^\top, y_{-0}(\cdot;x_T,v(\cdot))^\top)^\top, (q(\cdot;x_T,v(\cdot))^\top, z_{-0}(\cdot;x_T,v(\cdot))^\top)^\top )^\top \in \Pi$ is the unique solution to MF-BSDE \eqref{3:BS} and \eqref{3:TC} when $x^*(T) =x_T$ and $v(\cdot) \in L^2_{\mathbb F}(0,T;\mathbb R^{m_0})$. By Definition \eqref{3.2:scrX}, we know $\mathscr X = \mathbb K(L^2_{\mathbb F}(0,T;\mathbb R^{m_0}))$, the range of $\mathbb K$. Then the backward system \eqref{3:BS} is exactly controllable if and only if $\mathbb K$ is surjective. From the linearity of MF-BSDE \eqref{3:BS} and \eqref{3:TC}, we have
\[
\pi(\cdot;x_T,v(\cdot)) = \pi(\cdot;0,v(\cdot)) +\pi(\cdot;x_T,0) \ \mbox{for any } x_T\in L^2_{\mathcal F_T}(\Omega;\mathbb R^n) \mbox{ and any } v(\cdot) \in L^2_{\mathbb F}(0,T;\mathbb R^{m_0}).
\]
Especially,
\[
x^*(0;x_T,v(\cdot)) = \mathbb K v(\cdot) +\mathbb L x_T \quad \mbox{for any } x_T\in L^2_{\mathcal F_T}(\Omega;\mathbb R^n) \mbox{ and any } v(\cdot) \in L^2_{\mathbb F}(0,T;\mathbb R^{m_0}).
\]
The linearity of MF-BSDE \eqref{3:BS} and \eqref{3:TC} also implies that both $\mathbb K$ and $\mathbb L$ are linear operators. Moreover, the continuous dependence of solutions of MF-BSDEs (see \cite[Proposition 2.2]{TY-23} for example) implies that both $\mathbb K$ and $\mathbb L$ are bounded.

Now we introduce the following system: 
\begin{equation}\label{3.4:DS}
\left\{
\begin{aligned}
& \mathrm d\xi(s) = -\Big\{ \mathbf A(s)^\top \xi(s) +\bar{\mathbf A}(s)^\top \mathbb E[\xi(s)] \Big\}\, \mathrm ds\\
& \hskip 12.6mm -\Big\{ \mathbf C(s)^\top \xi(s) +\bar{\mathbf C}(s)^\top \mathbb E[\xi(s)] \Big\}\, \mathrm dW(s),\quad s\in [0,T],\\
& \xi_{-0} =0,
\end{aligned}
\right.
\end{equation}
where the stochastic process $\xi(\cdot) := ( \eta(\cdot)^\top, \xi_{-0}(\cdot)^\top )^\top$ taking values in $\mathbb R^n \times \mathbb R^{Nn}$. Let us supplement an initial condition
\begin{equation}\label{3.4:IC}
\eta(0) =\eta_0 \in \mathbb R^n.
\end{equation}
Then, \eqref{3.4:DS} and \eqref{3.4:IC} form a linear MF-SDE which admits a unique solution $\xi(\cdot) \equiv \xi(\cdot;\eta_0) \in L^2_{\mathbb F}(\Omega;C(0,T;\mathbb R^{(1+N)n}))$. Let $\pi(\cdot) =\pi(\cdot;x_T,v(\cdot))$ denote the unique solution to MF-BSDE \eqref{3:BS} and \eqref{3:TC}. Applying It\^o's formula to $\langle \xi(\cdot),\ y(\cdot) \rangle \equiv \langle \xi(\cdot;\eta_0),\ y(\cdot;x_T,v(\cdot)) \rangle$ yields
\begin{equation}\label{3.4:Dual}
\begin{aligned}
\big\langle \eta_0,\ x^*(0;x_T,v(\cdot)) \big\rangle
=\ & \mathbb E\Big\langle \eta(T;\eta_0) +H^\top \xi_{-0}(T;\eta_0) +\bar H^\top \mathbb E\big[ \xi_{-0}(T;\eta_0) \big],\ x_T \Big\rangle \\
& -\mathbb E\int_0^T \Big\langle \mathbf B(s)^\top\xi(s;\eta_0) +\bar{\mathbf B}(s)^\top \mathbb E\big[ \xi(s;\eta_0) \big],\ v(s) \Big\rangle\, \mathrm ds.
\end{aligned}
\end{equation}
Denote by $\mathbb K^*: \mathbb R^n \rightarrow L^2_{\mathbb F}(0,T;\mathbb R^n)$ and $\mathbb L^*: \mathbb R^n \rightarrow L^2_{\mathcal F_T}(\Omega;\mathbb R^n)$ the adjoint operators of $\mathbb K$ and $\mathbb L$ (see the definition \eqref{3.4:KL}), respectively. Then Equation \eqref{3.4:Dual} shows that
\begin{equation}\label{3.4:K*}
\mathbb K^*\eta_0 = -\mathbf B(\cdot)^\top \xi(\cdot;\eta_0) -\bar{\mathbf B}(\cdot)^\top \mathbb E \big[ \xi(\cdot;\eta_0) \big] \quad \mbox{for any } \eta_0 \in \mathbb R^n
\end{equation}
and
\begin{equation}\label{3.4:L*}
\mathbb L^*\eta_0 = \eta(T;\eta_0) +H^\top \xi_{-0}(T;\eta_0) +\bar H^\top \mathbb E\big[ \xi_{-0}(T;\eta_0) \big] \quad \mbox{for any } \eta_0 \in \mathbb R^n.
\end{equation}

\begin{definition}
\begin{enumerate}[(i).]
\item We call $\mathbb K^*$ defined by \eqref{3.4:K*} an {\rm observer} of the system \eqref{3.4:DS} on $[0,T]$.
\item The system \eqref{3.4:DS} together with the observer \eqref{3.4:K*} is said to be {\rm exactly observable} on $[0,T]$ if the initial value $\eta_0\in\mathbb R^n$ of $\eta(\cdot)$ can be uniquely determined from the {\rm observation} $\mathbb K^*\eta_0$, i.e. the operator $\mathbb K^*$ is injective.
\end{enumerate}
\end{definition}

\begin{remark}
The exact observability and the exact controllability are called a pair of dual concepts. The observation system \eqref{3.4:DS} and \eqref{3.4:K*} and the control system \eqref{3:BS} and $x^*(T) =0$ are called a pair of dual systems. 
\end{remark}

The following estimate of MF-SDE \eqref{3.4:DS} and \eqref{3.4:IC} is often used to characterize the exact observability of the system \eqref{3.4:DS} and \eqref{3.4:K*}.

\begin{proposition}\label{3.4P:OI}
Let Assumptions (H2) and (H4) hold. The system \eqref{3.4:DS} and \eqref{3.4:K*} is exactly observable if and only if, there exists a constant $\delta>0$ such that
\begin{equation}\label{3.4:OI}
\Vert \mathbb K^*\eta_0 \Vert_{L^2_{\mathbb F}(0,T;\mathbb R^{m_0})}^2 = \mathbb E\int_0^T \Big| \mathbf B(s)^\top \xi(s;\eta_0) +\bar{\mathbf B}(s)^\top \mathbb E \big[ \xi(s;\eta_0) \big] \Big|^2\, \mathrm ds \geq \delta |\eta_0|^2 \quad \mbox{for any } \eta_0\in \mathbb R^n.
\end{equation}
The above inequality \eqref{3.4:OI} is called the {\rm observability inequality} of the system \eqref{3.4:DS} and \eqref{3.4:K*}.
\end{proposition}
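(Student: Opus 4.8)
The equality displayed in \eqref{3.4:OI} is merely the formula \eqref{3.4:K*} for $\mathbb K^*\eta_0$ combined with the definition of the $L^2_{\mathbb F}(0,T;\mathbb R^{m_0})$-norm, so the real content of the proposition is the equivalence between the injectivity of $\mathbb K^*$ and the existence of a constant $\delta>0$ with $\|\mathbb K^*\eta_0\|^2\ge\delta|\eta_0|^2$ for every $\eta_0\in\mathbb R^n$. The plan is to prove this equivalence by a compactness argument that exploits the finite-dimensionality of the domain $\mathbb R^n$ of $\mathbb K^*$.

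The sufficiency direction is immediate: if such a $\delta>0$ exists and $\mathbb K^*\eta_0=0$, then $\delta|\eta_0|^2\le 0$, so $\eta_0=0$, and hence $\mathbb K^*$ is injective; that is, \eqref{3.4:DS} together with \eqref{3.4:K*} is exactly observable.

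For the necessity, I would first record that $\eta_0\mapsto\xi(\cdot;\eta_0)$ is a bounded linear map from $\mathbb R^n$ into $L^2_{\mathbb F}(\Omega;C(0,T;\mathbb R^{(1+N)n}))$, by the linearity of the MF-SDE \eqref{3.4:DS} and \eqref{3.4:IC} and the usual well-posedness estimate; composing it with $\xi(\cdot)\mapsto\mathbf B(\cdot)^\top\xi(\cdot)+\bar{\mathbf B}(\cdot)^\top\mathbb E[\xi(\cdot)]$ and with $\|\cdot\|^2$ shows that $f(\eta_0):=\|\mathbb K^*\eta_0\|^2_{L^2_{\mathbb F}(0,T;\mathbb R^{m_0})}$ is continuous on $\mathbb R^n$ and homogeneous of degree two, i.e. $f(\lambda\eta_0)=\lambda^2 f(\eta_0)$. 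Then I would restrict $f$ to the unit sphere $\mathbb S^{n-1}=\{\eta_0\in\mathbb R^n:|\eta_0|=1\}$, which is compact, so that $f$ attains a minimum $\delta:=\min_{\eta_0\in\mathbb S^{n-1}}f(\eta_0)$ at some $\eta_0^\star\in\mathbb S^{n-1}$. If $\delta=0$ then $\mathbb K^*\eta_0^\star=0$ with $\eta_0^\star\ne 0$, contradicting injectivity; hence $\delta>0$. Finally, for arbitrary $\eta_0\ne 0$, homogeneity gives $f(\eta_0)=|\eta_0|^2 f(\eta_0/|\eta_0|)\ge\delta|\eta_0|^2$, while the case $\eta_0=0$ is trivial; this is exactly \eqref{3.4:OI}.

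The argument is essentially routine, and I do not expect a genuine obstacle. The only delicate point — and the reason the statement holds — is the finite-dimensionality of $\mathbb R^n$, which makes the unit sphere compact and allows the passage from the mere injectivity of $\mathbb K^*$ to a uniform quantitative lower bound; in an infinite-dimensional setting injectivity alone would be insufficient. The remaining ingredient is just the continuity of $\eta_0\mapsto\mathbb K^*\eta_0$, which is a consequence of the continuous dependence of solutions of MF-SDEs already used to obtain the well-posedness of \eqref{3.4:DS} and \eqref{3.4:IC}.
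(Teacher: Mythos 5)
Your proof is correct, and it reaches the conclusion by a genuinely different (and more elementary) route than the paper. Both arguments reduce the proposition to the fact that an injective bounded linear operator on a finite-dimensional space is bounded below, and both exploit finite-dimensionality, but in different ways. The paper notes that $\mathbb K^*(\mathbb R^n)$ is finite-dimensional, hence complete, so that $\mathbb K^*:\mathbb R^n\rightarrow\mathbb K^*(\mathbb R^n)$ is a bijection between Banach spaces, and then invokes Banach's inverse operator theorem to get boundedness of $(\mathbb K^*)^{-1}$, which is exactly the inequality \eqref{3.4:OI}. You instead use finite-dimensionality to get compactness of the unit sphere, take $\delta$ to be the minimum of the continuous, $2$-homogeneous function $\eta_0\mapsto\Vert\mathbb K^*\eta_0\Vert^2$ on the sphere, observe that injectivity forces $\delta>0$, and scale. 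Your version avoids the open mapping theorem entirely, at the cost of having to verify continuity of $\eta_0\mapsto\mathbb K^*\eta_0$ explicitly (which you do correctly via the well-posedness estimate for the MF-SDE \eqref{3.4:DS} and \eqref{3.4:IC}; alternatively one can simply note that $\mathbb K^*$ is the adjoint of the bounded operator $\mathbb K$ and is therefore bounded, as the paper does). The paper's argument is shorter given the functional-analytic machinery; yours is self-contained and makes the role of finite-dimensionality — the point you correctly flag as the only delicate one — more transparent.
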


\begin{proof}
The sufficiency is obvious, and we only need to prove the necessity. If the system \eqref{3.4:DS} and \eqref{3.4:K*} is exactly observable, then the bounded linear operator $\mathbb K^* : \mathbb R^n \rightarrow \mathbb K^*(\mathbb R^n) \subset L^2_{\mathbb F}(0,T;\mathbb R^{m_0})$ is bijective. Because $\mathbb R^n$ is finite-dimensional, so is $\mathbb K^*(\mathbb R^n)$. Consequently, $\mathbb K^*(\mathbb R^n)$ is a complete linear normed space. By Banach's inverse operator theorem, $(\mathbb K^*)^{-1}: \mathbb K^*(\mathbb R^n) \rightarrow \mathbb R^n$ is bounded also. Therefore, the observability inequality \eqref{3.4:OI} holds.
\end{proof}

In order to better understand the observability inequality \eqref{3.4:OI}, let us analyze the structure of the solution $\xi(\cdot;\eta_0)$ to MF-SDE \eqref{3.4:DS} and \eqref{3.4:IC}. With the help of the matrix-valued MF-SDE \eqref{3.2:Found} and the linearity, we have
\begin{equation}\label{3.4:xiPhi}
\xi(\cdot;\eta_0) = \Phi(\cdot)^\top 
\begin{pmatrix} I_n \\ 0_{(Nn)\times n}\end{pmatrix}\eta_0.
\end{equation}
Then,
\begin{equation}\label{3.4:K*G}
\begin{aligned}
\Vert \mathbb K^*\eta_0 \Vert_{L^2_{\mathbb F}(0,T;\mathbb R^{m_0})}^2
=\ & \mathbb E\int_0^T \Big| \mathbf B(s)^\top \Phi(s)^\top \begin{pmatrix} I \\ 0 \end{pmatrix} \eta_0 +\bar{\mathbf B}(s)^\top \mathbb E\big[ \Phi(s)^\top \big] \begin{pmatrix} I \\ 0 \end{pmatrix} \eta_0 \Big|^2\, \mathrm ds\\
=\ & \mathbb E\int_0^T \Big| \Big\{ \mathbf B(s)^\top \Phi(s)^\top +\bar{\mathbf B}(s)^\top \mathbb E\big[ \Phi(s)^\top \big] \Big\} \begin{pmatrix} I \\ 0 \end{pmatrix} \eta_0 \Big|^2\, \mathrm ds\\
=\ & \langle \mathbf G\eta_0,\ \eta_0 \rangle,
\end{aligned}
\end{equation}
where $\mathbf G$ is the Gramian matrix defined by \eqref{3.2:G}. Consequently, the observability inequality \eqref{3.4:OI} holds if and only if the matrix $\mathbf G$ is non-singular.

The above analysis, Proposition \ref{3.4P:OI} and Theorem \ref{Gram-Cri} implies the following

\begin{theorem}\label{3.4T:ECEO}
Let Assumptions (H2) and (H4) hold. Then the backward system \eqref{3:BS} (or MF-GBCS \eqref{2:MF-GBCS}) is exactly controllable if and only if the system \eqref{3.4:DS} and \eqref{3.4:K*} is exactly observable.
\end{theorem}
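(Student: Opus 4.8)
The plan is to close a short chain of equivalences, since essentially all the analytic work has already been carried out. The first link is Theorem~\ref{Gram-Cri}: under Assumptions (H2) and (H4), the backward system \eqref{3:BS} (equivalently, by Corollary~\ref{3.1C-GBeB}, MF-GBCS \eqref{2:MF-GBCS}) is exactly controllable if and only if the Gramian matrix $\mathbf G$ from \eqref{3.2:G} is non-singular. So it suffices to show that exact observability of the dual system \eqref{3.4:DS} and \eqref{3.4:K*} is likewise equivalent to the non-singularity of $\mathbf G$.

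For the second link, I would express the observer $\mathbb K^*$ through $\mathbf G$. Solving the matrix-valued MF-SDE \eqref{3.2:Found} and invoking linearity gives the representation \eqref{3.4:xiPhi} of the dual state in terms of $\Phi$, namely $\xi(\cdot;\eta_0) = \Phi(\cdot)^\top \begin{pmatrix} I_n \\ 0_{(Nn)\times n}\end{pmatrix}\eta_0$; substituting this into formula \eqref{3.4:K*} for $\mathbb K^*\eta_0$, taking the $L^2_{\mathbb F}(0,T;\mathbb R^{m_0})$-norm, and moving the expectation inside the square exactly as in the computation \eqref{3.4:K*G}, one arrives at the identity
\[
\mathbb E\int_0^T \bigl| \mathbf B(s)^\top \xi(s;\eta_0) +\bar{\mathbf B}(s)^\top \mathbb E[\xi(s;\eta_0)] \bigr|^2\, \mathrm ds = \langle \mathbf G\eta_0,\ \eta_0\rangle, \qquad \eta_0\in\mathbb R^n .
\]

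The third link is the observability-inequality criterion, Proposition~\ref{3.4P:OI}: the system \eqref{3.4:DS} and \eqref{3.4:K*} is exactly observable if and only if there is a constant $\delta>0$ making the left-hand side above bounded below by $\delta|\eta_0|^2$ for all $\eta_0$. Combining this with the identity just derived, exact observability is equivalent to $\langle\mathbf G\eta_0,\eta_0\rangle\geq\delta|\eta_0|^2$ for all $\eta_0\in\mathbb R^n$. Since $\mathbf G$ is symmetric and positive semi-definite (it is a Gramian, as \eqref{3.4:K*G} makes manifest), this uniform coercivity bound holds for some $\delta>0$ precisely when $\mathbf G$ is positive definite, i.e. non-singular — a statement that follows from the spectral theorem together with compactness of the unit sphere in the finite-dimensional space $\mathbb R^n$. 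Chaining the three links completes the proof.

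I do not anticipate a genuine obstacle: everything rests on Theorem~\ref{Gram-Cri}, the It\^o computation behind \eqref{3.4:Dual}--\eqref{3.4:K*}, the representation \eqref{3.4:xiPhi}, and Proposition~\ref{3.4P:OI}, all of which are already available. The only points demanding care are bookkeeping ones: verifying that the correct integrand in \eqref{3.4:K*G} is $\{\mathbf B(s)^\top\Phi(s)^\top +\bar{\mathbf B}(s)^\top\mathbb E[\Phi(s)^\top]\}\begin{pmatrix} I_n \\ 0_{(Nn)\times n}\end{pmatrix}\eta_0$, so that the expectation may legitimately be pulled through the square, and recalling the elementary linear-algebra fact that a symmetric positive semi-definite matrix $\mathbf G$ is non-singular if and only if $\langle\mathbf G\eta_0,\eta_0\rangle\geq\delta|\eta_0|^2$ holds for some $\delta>0$ and all $\eta_0\in\mathbb R^n$.
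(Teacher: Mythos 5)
Your proposal is correct and follows essentially the same route as the paper: the paper also chains Theorem \ref{Gram-Cri}, the representation \eqref{3.4:xiPhi} leading to the identity $\Vert\mathbb K^*\eta_0\Vert^2_{L^2_{\mathbb F}(0,T;\mathbb R^{m_0})}=\langle\mathbf G\eta_0,\eta_0\rangle$ in \eqref{3.4:K*G}, and Proposition \ref{3.4P:OI}, concluding that exact observability holds if and only if $\mathbf G$ is non-singular. No gaps.
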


\section{Regulator's control steering the state from $x_0$ to $x_T$}\label{SEC:Steer}

In the previous Section \ref{SEC:Exact}, we give three approaches to judge the exact controllability of MF-GBCS \eqref{2:MF-GBCS}: the Gram-type criterion (see Theorem \ref{Gram-Cri}), the Kalman-type criterion for time-invariant coefficients (see Theorem \ref{Kalman-Cri}) and the exact observability of the dual system \eqref{3.4:DS} and \eqref{3.4:K*} (see Theorem \ref{3.4T:ECEO}) with the help of the equivalent backward system \eqref{3:BS}. However, it is not provided how to select some regulator's admissible control to steer the state $x^*(\cdot)$ from an arbitrary given initial value $x_0\in \mathbb R^n$ to an arbitrary given terminal target $x_T\in L^2_{\mathcal F_t}(\Omega;\mathbb R^n)$ when MF-GBCS \eqref{2:MF-GBCS} is exactly controllable. In this section, we aim to solve this problem.

For any $(x_0,x_T) \in \mathbb R^n \times L^2_{\mathcal F_T}(\Omega;\mathbb R^n)$, we introduce a quadratic function $g(\cdot;x_0,x_T) : \mathbb R^n \rightarrow \mathbb R$ as follows:
\begin{equation}\label{4:g}
\begin{aligned}
g(\eta_0;x_0,x_T) =\ & \Vert \mathbb K^*\eta_0 \Vert_{L^2_{\mathbb F}(0,T;\mathbb R^{m_0})}^2 +2\langle x_0,\ \eta_0 \rangle -2 \langle x_T,\ \mathbb L^*\eta_0 \rangle_{L^2_{\mathcal F_T}(\Omega;\mathbb R^n)}\\
=\ & \langle \mathbf G \eta_0,\ \eta_0 \rangle +2\langle x_0 -\mathbb L x_T,\ \eta_0 \rangle,
\end{aligned}
\end{equation}
where the relationship \eqref{3.4:K*G} is used in the second equation. Moreover, we propose a family of simple minimization problems.

\medskip

\noindent{\bf Problem (Min).} For any $(x_0,x_T) \in \mathbb R^n \times L^2_{\mathcal F_T}(\Omega;\mathbb R^n)$, to find an $\eta_0^* \in \mathbb R^n$ such that
\begin{equation}\label{4:gMin}
g(\eta_0^*;x_0,x_T) = \inf_{\eta_0 \in \mathbb R^n} g(\eta_0;x_0,x_T).
\end{equation}
The vector $\eta_0^*$ satisfying \eqref{4:gMin} is called a {\it minimizer} of Problem (Min) at the point $(x_0,x_T)$. If the minimizer (uniquely) exists at a point $(x_0,x_T)$, then Problem (Min) is said to be {\it (uniquely) solvable at $(x_0, x_T)$}. If the minimizer (uniquely) exists at every point, then Problem (Min) is said to be {\it (uniquely) solvable}.

\medskip

We give the main result of this section.

\begin{theorem}\label{4:THM}
Let Assumptions (H2) and (H4) hold. Then the following three statements are equivalent:
\begin{enumerate}[(i).]
\item The backward system \eqref{3:BS} is exactly controllable;
\item Problem (Min) is uniquely solvable;
\item Problem (Min) at a point $(x_0^\prime,x_T^\prime)\in \mathbb R^n \times L^2_{\mathcal F_T}(\Omega;\mathbb R^n)$ is unique solvable.
\end{enumerate}
In this case, the unique minimizer of Problem (Min) at $(x_0,x_T) \in \mathbb R^n \times L^2_{\mathcal F_T}(\Omega;\mathbb R^n)$ is
\begin{equation}\label{4:Minimizer}
\eta_0^* = -\mathbf G^{-1} (x_0 -\mathbb Lx_T)
\end{equation}
where $\mathbf G$ is defined by \eqref{3.2:G} and $\mathbb L$ is defined by \eqref{3.4:KL}, and the corresponding minimum is
\begin{equation}\label{4:Minimum}
g(\eta_0^*; x_0,x_T) = - \langle \mathbf G^{-1}(x_0 -\mathbb L x_T),\ x_0 -\mathbb L x_T \rangle.
\end{equation}
Moreover, the following admissible control
\begin{equation}\label{4:vSteer}
v^*(\cdot) := \Big\{ \mathbf B(\cdot)^\top \Phi(\cdot)^\top +\bar{\mathbf B}(\cdot)^\top \mathbb E\big[ \Phi(\cdot)^\top \big] \Big\} \begin{pmatrix} I_n \\ 0_{(Nn) \times n} \end{pmatrix} \eta_0^*
\end{equation}
can steer the state process $x^*(\cdot)$ from $x^*(0) =x_0$ to $x^*(T) =x_T$.
\end{theorem}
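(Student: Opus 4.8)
The plan is to reduce the whole statement to the elementary behaviour of the quadratic function $g(\cdot;x_0,x_T)$ in \eqref{4:g}, whose second line exhibits it as a quadratic with constant Hessian $2\mathbf{G}$, where $\mathbf{G}\geq0$ is the Gramian from \eqref{3.2:G}. First I would record the following dichotomy. If $\mathbf{G}$ is non-singular, then $g(\cdot;x_0,x_T)$ is strictly convex and coercive, hence has exactly one minimizer, characterized by the first-order condition $\mathbf{G}\eta_0+(x_0-\mathbb{L}x_T)=0$, i.e. $\eta_0^*=-\mathbf{G}^{-1}(x_0-\mathbb{L}x_T)$. If $\mathbf{G}$ is singular, choose $0\neq\beta\in\ker\mathbf{G}$; then $g(\eta_0+t\beta;x_0,x_T)=g(\eta_0;x_0,x_T)+2t\langle x_0-\mathbb{L}x_T,\ \beta\rangle$ for all $t\in\mathbb{R}$, so $g$ is either unbounded below (when $\langle x_0-\mathbb{L}x_T,\beta\rangle\neq0$, hence has no minimizer) or constant along every line $\eta_0+\mathbb{R}\beta$ (otherwise, so a minimizer, if it exists, is not unique). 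In both cases Problem~(Min) fails to be uniquely solvable at \emph{any} point, and one observes that non-singularity of $\mathbf{G}$ is a property independent of $(x_0,x_T)$.

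The three equivalences then follow immediately. For (iii)$\Rightarrow$(i): unique solvability of Problem~(Min) at the single point $(x_0',x_T')$ forces $\mathbf{G}$ non-singular by the dichotomy, and Theorem~\ref{Gram-Cri} turns this into exact controllability of the backward system \eqref{3:BS}. For (i)$\Rightarrow$(ii): Theorem~\ref{Gram-Cri} makes $\mathbf{G}$ non-singular, and the dichotomy then gives unique solvability at every $(x_0,x_T)$ with minimizer \eqref{4:Minimizer}. The implication (ii)$\Rightarrow$(iii) is trivial. The value \eqref{4:Minimum} comes from substituting $\eta_0^*$ into the second line of \eqref{4:g} and using $\mathbf{G}\eta_0^*=-(x_0-\mathbb{L}x_T)$.

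For the steering claim, I would first rewrite the control $v^*(\cdot)$ of \eqref{4:vSteer} as $v^*(\cdot)=-\mathbb{K}^*\eta_0^*$: combining the representation \eqref{3.4:K*} of the adjoint $\mathbb{K}^*$ with the identity \eqref{3.4:xiPhi} expressing $\xi(\cdot;\eta_0)$ through the fundamental solution $\Phi(\cdot)$ of \eqref{3.2:Found} gives precisely $\mathbb{K}^*\eta_0=-\{\mathbf{B}(\cdot)^\top\Phi(\cdot)^\top+\bar{\mathbf{B}}(\cdot)^\top\mathbb{E}[\Phi(\cdot)^\top]\}\begin{pmatrix} I_n \\ 0 \end{pmatrix}\eta_0$, which also shows $v^*(\cdot)\in L^2_{\mathbb{F}}(0,T;\mathbb{R}^{m_0})$ is admissible. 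Next, from \eqref{3.4:K*G} one has $\langle\mathbb{K}\mathbb{K}^*\eta_0,\ \eta_0\rangle=\|\mathbb{K}^*\eta_0\|_{L^2_{\mathbb{F}}(0,T;\mathbb{R}^{m_0})}^2=\langle\mathbf{G}\eta_0,\ \eta_0\rangle$ for every $\eta_0\in\mathbb{R}^n$, and since $\mathbb{K}\mathbb{K}^*$ and $\mathbf{G}$ are both symmetric this yields $\mathbb{K}\mathbb{K}^*=\mathbf{G}$. Finally, denoting by $x^*(\cdot)$ the state component of the solution of \eqref{3:BS} with terminal value $x^*(T)=x_T$ and control $v^*(\cdot)$, the superposition identity $x^*(0;x_T,v^*(\cdot))=\mathbb{K}v^*(\cdot)+\mathbb{L}x_T$ gives $x^*(0)=-\mathbb{K}\mathbb{K}^*\eta_0^*+\mathbb{L}x_T=-\mathbf{G}\eta_0^*+\mathbb{L}x_T=(x_0-\mathbb{L}x_T)+\mathbb{L}x_T=x_0$, as required, while $x^*(T)=x_T$ holds by construction.

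No step presents a genuine obstacle; the points requiring care are the singular case of the dichotomy, where one must verify that failure of non-singularity of $\mathbf{G}$ destroys \emph{uniqueness} of the minimizer and not merely its existence, together with the two bookkeeping identities $v^*=-\mathbb{K}^*\eta_0^*$ and $\mathbb{K}\mathbb{K}^*=\mathbf{G}$, which are precisely what make the closed-form control \eqref{4:vSteer} perform the steering.
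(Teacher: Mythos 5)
Your proposal is correct and follows essentially the same route as the paper: the equivalences and the minimizer/minimum all reduce to the observation that the quadratic $g(\cdot;x_0,x_T)$ has a unique minimizer iff $\mathbf G>0$ (you usefully spell out the singular-$\mathbf G$ dichotomy that the paper dismisses as ``clear''), combined with Theorem~\ref{Gram-Cri}. Your steering verification packages the paper's direct manipulation of the duality relation \eqref{3.4:Dual} into the operator identities $v^*=-\mathbb K^*\eta_0^*$ and $\mathbb K\mathbb K^*=\mathbf G$ (the latter from \eqref{3.4:K*G} by symmetry), which is a cosmetic but clean reorganization of the same argument.
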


\begin{proof}
Firstly, for any $(x_0,x_T) \in \mathbb R^n \times L^2_{\mathcal F_T}(\Omega;\mathbb R^n)$, since $g(\cdot;x_0,x_T)$ is a quadratic function (see \eqref{4:g}), it is clear that Problem (Min) at $(x_0,x_T)$ admits a unique minimizer if and only if
\begin{equation}\label{4T:Eq1}
\mathbf G >0.
\end{equation}
We note that \eqref{4T:Eq1} is independent of $(x_0,x_T)$, then the unique solvability of Problem (Min) at any one point is equivalent to its unique solvability at all points, i.e. the statements (ii) and (iii) are equivalent. Moreover, Theorem \ref{Gram-Cri} reads that \eqref{4T:Eq1} is also equivalent to the exact controllability of the backward system \eqref{3:BS}. Therefore, all the statements (i), (ii) and (iii) are equivalent.

Secondly, by the complete square formula, we derive from \eqref{4:g} that
\begin{equation}
g(\eta_0;x_0,x_T) = \big\langle \mathbf G^{-1} \big( \mathbf G\eta_0 +x_0 -\mathbb Lx_T \big),\ \mathbf G\eta_0 +x_0 -\mathbb Lx_T \big\rangle
- \big\langle \mathbf G^{-1} (x_0 -\mathbb L x_T),\ x_0 -\mathbb L x_T \big\rangle.
\end{equation}
Then we get the conclusions \eqref{4:Minimizer} and \eqref{4:Minimum}.

Thirdly, for any $x_T \in L^2_{\mathcal F_T}(\Omega;\mathbb R^n)$, any $v(\cdot) \in L^2_{\mathbb F}(0,T;\mathbb R^{m_0})$ and any $\eta_0 \in \mathbb R^n$, the dual relationship \eqref{3.4:Dual} implies
\[
\begin{aligned}
0 =\ & \langle \eta_0,\ x^*(0,x_T,v(\cdot)) \rangle -\langle \mathbb L^* \eta_0,\ x_T \rangle_{L^2_{\mathcal F_T}(\Omega;\mathbb R^n)} -\langle \mathbb K^*\eta_0,\ v(\cdot) \rangle_{L^2_{\mathbb F}(0,T;\mathbb R^{m_0})}\\
=\ & \big\langle \eta_0,\ x^*(0,x_T,v(\cdot))-\mathbb L x_T \big\rangle +\mathbb E\int_0^T \Big\langle \mathbf B(s)^\top \xi(s;\eta_0) +\bar{\mathbf B}(s)^\top \mathbb E[\xi(s;\eta_0)],\ v(s) \Big\rangle\, \mathrm ds.
\end{aligned}
\]
By the expression \eqref{3.4:xiPhi}, the above equation is deduced as (the argument $s$ is suppressed for simplicity)
\[
\begin{aligned}
0 =\ & \big\langle \eta_0,\ x^*(0,x_T,v(\cdot))-\mathbb L x_T \big\rangle +\mathbb E\int_0^T \Big\langle \Big\{ \mathbf B^\top \Phi^\top +\bar{\mathbf B}^\top \mathbb E[\Phi^\top]\Big\} \begin{pmatrix} I \\ 0 \end{pmatrix} \eta_0,\ v \Big\rangle\, \mathrm ds\\
=\ & \bigg\langle \eta_0,\ x^*(0,x_T,v(\cdot))-\mathbb L x_T 
+\begin{pmatrix} I & 0 \end{pmatrix} \mathbb E\int_0^T \Big\{ \Phi\mathbf B +\mathbb E[\Phi] \bar{\mathbf B} \Big\} v\, \mathrm ds \bigg\rangle.
\end{aligned}
\]
Now, for any $x_0\in \mathbb R^n$, letting $\eta_0^*$ denote the unique minimizer of Problem (Min) at $(x_0, x_T)$, Equation \eqref{4:Minimizer} provides an expression of $\mathbb L x_T = x_0 +\mathbf G \eta_0^*$. Substituting this expression into the above equation leads to
\[
\begin{aligned}
0 =\ & \bigg\langle \eta_0,\ x^*(0,x_T,v(\cdot)) -x_0 -\mathbf G \eta_0^* 
+\begin{pmatrix} I & 0 \end{pmatrix} \mathbb E\int_0^T \Big\{ \Phi\mathbf B +\mathbb E[\Phi] \bar{\mathbf B} \Big\} v\, \mathrm ds \bigg\rangle\\
=\ & \bigg\langle \eta_0,\ x^*(0,x_T,v(\cdot)) -x_0  
+\begin{pmatrix} I & 0 \end{pmatrix} \mathbb E\int_0^T \Big\{ \Phi\mathbf B +\mathbb E[\Phi] \bar{\mathbf B} \Big\}\\
& \hskip 67mm \bigg[ v -\Big\{ \Phi\mathbf B +\mathbb E[\Phi] \bar{\mathbf B} \Big\}^\top \begin{pmatrix} I \\ 0 \end{pmatrix} \eta_0^* \bigg]\, \mathrm ds \bigg\rangle,
\end{aligned}
\]
where the definition \eqref{3.2:G} of the Gramian matrix $\mathbf G$ is used. By selecting $v(\cdot) =v^*(\cdot)$ defined by \eqref{4:vSteer}, we derive 
\begin{equation}
0 = \big\langle \eta_0,\ x^*(0,x_T,v^*(\cdot)) -x_0 \big\rangle.
\end{equation}
Due to the arbitrariness of $\eta_0$, we have 
\[
x^*(0,x_T,v^*(\cdot)) =x_0,
\] 
i.e., the admissible control $v^*(\cdot)$ given by \eqref{4:vSteer} steers the state process $x^*(\cdot)$ from the initial value $x_0$ to the terminal value $x_T$. The proof is completed.
\end{proof}

\section{Solution of the two-layer problem}\label{SEC:Sol}

In this section, we present a complete solution to the two-layer problem studied in this paper. This solution requires not only Assumptions (H1) and (H4) but also Assumption (H3). We note that the use of Assumption (H3) is to ensure the identity of the following two:
\begin{itemize}
\item the process $\theta(\cdot)$ involved in the exact controllability of MF-GBCS \eqref{2:MF-GBCS} (see Definition \ref{2.2:Def});
\item the solution $\theta(\cdot)$ of MF-FBSDE \eqref{2:MF-FBSDE} used to construct the Nash equilibrium point.
\end{itemize}
Then, the agents' problem at the lower layer and the regulator's problem at the upper layer are linked.

Now, let $x_0 \in \mathbb R^n$ be the initial state and $x_T \in L^2_{\mathcal F_T}(\Omega;\mathbb R^n)$ be the desired terminal state. The two-layer problem can be solved in the following four steps.
\begin{enumerate}[Step 1.]
\item The regulator at the upper layer judges the exact controllability of MF-GBCS \eqref{2:MF-GBCS} by virtue of the Gram-type criterion, or the Kalman-type criterion with the additional Assumption (H5), or the exact observability of the dual system \eqref{3.4:DS} and \eqref{3.4:K*}.

\item When MF-GBCS \eqref{2:MF-GBCS} is exactly controllable, based on the solution $\Phi(\cdot)$ to MF-SDE \eqref{3.2:Found} and the solution $x^*(0;x_T,0)$ to MF-BSDE \eqref{3:BS} and \eqref{3:TC} under zero control, the regulator defines
\begin{equation}\label{5:Sv*}
v^*(\cdot) = -\Big\{ \mathbf B(\cdot)^\top \Phi(\cdot)^\top +\bar{\mathbf B}(\cdot)^\top \mathbb E \big[\Phi(\cdot)^\top \big] \Big\} 
\begin{pmatrix} I_n \\ 0_{(Nn) \times n} \end{pmatrix}
\mathbf G^{-1} \big( x_0 -x^*(0;x_T,0) \big).
\end{equation}

\item Based on the unique solution $(y^*(\cdot)^\top, z^*(\cdot)^\top)^\top = ( (x^{**}(\cdot)^\top, y_{-0}^*(\cdot)^\top ), (q^*(\cdot)^\top, z_{-0}^*(\cdot)^\top ) )^\top$ to MF-BSDE \eqref{3:BS} and \eqref{3:TC} under the admissible control $v^*(\cdot)$ defined by \eqref{5:Sv*}, the regulator further defines and announces the following admissible control (the argument $s$ is suppressed for simplicity):
\begin{equation}\label{5:Su0*}
\begin{aligned}
u_0^* =\ & D_0^\top (D_0D_0^\top)^{-1} \Big\{ \big( q^*-\mathbb E[q^*] \big) -C\big( x^{**} -\mathbb E[x^{**}] \big) -D^y_{-0} \big( y_{-0}^* -\mathbb E[y_{-0}^*] \big)\\
& -D^z_{-0} \big( z_{-0}^* -\mathbb E[z_{-0}^*] \big) \Big\} +\big[ I -D_0^\top (D_0D_0^\top)^{-1}D_0 \big] \big( v^* -\mathbb E[v^*] \big)\\
& +\widehat D_0^\top (\widehat D_0 \widehat D_0^\top)^{-1} \Big\{ \mathbb E[q^*] -\widehat C \mathbb E[x^{**}] -\widehat D^y_{-0} \mathbb E[y_{-0}^*] -\widehat D^z_{-0} \mathbb E[z_{-0}^*] \Big\}\\
& +\big[ I -\widehat D_0^\top (\widehat D_0 \widehat D_0^\top)^{-1}\widehat D_0 \big] \mathbb E[v^*].
\end{aligned}
\end{equation}

\item After the regulator announced her/his admissible control $u_0^*(\cdot)$, the agents at the lower layer play an LQ non-cooperative game at the point $(x_0, u_0^*(\cdot))$ by selecting the following Nash equilibrium point (the argument $s$ is also suppressed):
\begin{equation}
\begin{aligned}
u^{**}_{-0} =\ & -R^{-1} \Big\{ \widetilde B_{-0}^\top \big( y_{-0}^* -\mathbb E[y_{-0}^*] \big) +\widetilde D_{-0}^\top \big(z_{-0}^* -\mathbb E[z_{-0}^*]\big) \Big\}\\
& -\big( R +\bar R \big)^{-1} \Big\{ \big( \widetilde B_{-0} +\bar{\widetilde B}_{-0} \big)^\top \mathbb E[y_{-0}^*] +\big( \widetilde D_{-0} +\bar{\widetilde D}_{-0} \big)^\top \mathbb E[z_{-0}^*] \Big\}.
\end{aligned}
\end{equation}
\end{enumerate}


\begin{thebibliography}{99}

\bibitem{BDL-11}
R. Buckdahn, B. Djehiche and J. Li, \it A general stochastic maximum principle for SDEs of mean-field type, \sl Appl. Math. Optim., \rm 64 (2011), no. 2, 197-216.

\bibitem{BDLP-09}
R. Buckdahn, B. Djehiche, J. Li and S. Peng, \it Mean-field backward stochastic differential equations: a limit approach, \sl Ann. Probab., \rm 37 (2009), no. 4, 1524-1565.

\bibitem{ChWW-15}
S. Chang, X. Wang and Z. Wang, \it Modeling and computation of transboundary industrial pollution with emission permits trading by stochastic differential game, \sl PLoS One, \rm 10 (2015), 1-29.

\bibitem{Coron-07}
J. M. Coron, \sl Control and nonlinearity, \rm Mathematical Surveys and Monographs, 136, American Mathematical Society, Providence, RI, 2007.

\bibitem{EPQ-97}
N. El Karoui, S. Peng and M. C. Quenez, \it Backward stochastic differential equations in finance, \sl Math. Finance, \rm 7 (1997), no. 1, 1-71.

\bibitem{Gor-14}
D. Goreac, \it Controllability properties of linear mean-field stochastic systems, \sl Stoch. Anal. Appl., \rm 32 (2014), no. 2, 280-297.

\bibitem{HMC-06}
M. Huang, R. P. Malham\'e and P. E. Caines, \it Large population stochastic dynamic games: closed-loop McKean-Vlasov systems and the Nash certainty equivalence principle, \sl Commun. Inf. Syst., \rm 6 (2006), no. 3, 221-251.

\bibitem{LL-07}
J. M. Lasry and P. L. Lions, \it Mean field games, \sl Jpn. J. Math., \rm 2 (2007), no. 1, 229-260.

\bibitem{LM-67}
E. B. Lee and L. Markus, \sl Foundations of optimal control theory, \rm John Wiley \& Sons, Inc., New York-London-Sydney 1967.

\bibitem{LLY-20}
N. Li, X. Li and Z. Yu, \it Indefinite mean-field type linear-quadratic stochastic optimal control problems, \sl Automatica, \rm 122 (2020), 109267, 10 pp.

\bibitem{LCGCB-19}
Y. Li, G. Carboni, F. Gonzalez, D. Campolo and E. Burdet, \it Differential game theory for versatile physical human-robot interaction, \sl Nat. Mach. Intell., \rm 1 (2019), pp. 36-43.

\bibitem{LMYG-17}
Y. Li, Y. Mu, S. Yuan and L. Guo, \it The game theoretical approach for multi-phase complex systems in chemical engineering, \sl J. Syst. Sci. Complex., \rm 30 (2017), no. 1, 4-19.

\bibitem{LP-10}
F. Liu and S. Peng, \it On controllability for stochastic control systems when the coefficient is time-variant, \sl J. Syst. Sci. Complex., \rm 23 (2010), no. 2, 270-278.

\bibitem{LY-22+}
F. Liu and Z. Yu, \it Controllability Gramian for stochastic game-based systems, \sl IEEE Trans. Automat. Control, \rm (2023) doi: 10.1109/TAC.2022.3232181.

\bibitem{LZh-BOOK}
Q. L\"u and X. Zhang, \sl Mathematical Control Theory for Stochastic Partial Differential Equations, \rm Probability Theory and Stochastic Modelling 101, Springer, 2021.

\bibitem{MDJ-18}
D. Mahmoudinia, R. B. Dastjerdi, and S. Jafari, \it Extraction of optimal
fiscal and monetary policy rules in framework of game theory: Application
of dynamic stochastic general equilibrium model, \sl Quart. J. Appl.
Theories Econom., \rm 4 (2018), 143-174.

\bibitem{MS-15}
J. R. Marden and J. S. Shamma, \it Game theory and distributed control, \sl in Handbook of Game Theory with Economic Application, \rm Vol. 4, H. P. Young and S. Zamir, eds., North-Holland, Amsterdam, 2015, 861-899.

\bibitem{M-71}
R. Merton, \it Optimum consumption and portfolio rules in a continuous-time model, \sl J. Econom. Theory, \rm 3 (1971), no. 4, 373-413.

\bibitem{Mu-18}
Y. Mu, \it Stackelberg-Nash equilibrium, social welfare and optimal structure in hierarchical continuous Public Goods game, \sl Systems Control Lett., \rm 112 (2018), 1-8.

\bibitem{MX-15}
H. Mukaidani and H. Xu, \it Stackelberg strategies for stochastic systems with multiple followers, \sl Automatica, \rm 53 (2015), 53-59.

\bibitem{PP-90}
\'E. Pardoux and S. Peng, \it Adapted solution of a backward stochastic differential equation, \sl Systems Control Lett., \rm 14 (1990), no. 1, 55-61.

\bibitem{P-94}
S. Peng, \it Backward stochastic differential equation and exact controllability of stochastic control systems, \sl Progr. Natur. Sci. (English Ed.), \rm 4 (1994), no. 3, 274-284.

\bibitem{PW-18}
H. Pham and X. Wei, \it Bellman equation and viscosity solutions for mean-field stochastic control problem, \sl ESAIM Control Optim. Calc. Var., \rm 24 (2018), no. 1, 437-461.

\bibitem{Pin-77}
R. S. Pindyck, \it Optimal economic stabilization policies under decentralized control and conflicting objectives, \sl IEEE Trans. Automatic Control, \rm AC-22 (1977), no. 4, 517-530.

\bibitem{S-08}
Y. Sannikov, \it A continuous-time version of the principal-agent problem, \sl Rev. Econom. Stud., \rm 75 (2008), no. 3, 957-984.

\bibitem{TY-23}
R. Tian and Z. Yu, \it Mean-field type FBSDEs under domination-monotonicity conditions and application to LQ problems, \sl SIAM J. Control Optim., \rm 61 (2023), no. 1, 22-46.

\bibitem{vEP-02}
B. van Aarle, J. Engwerda and J. Plasmans, \it Monetary and fiscal policy interaction in the EMU: a dynamic game approach. Game practice (Valencia, 2000), \sl Ann. Oper. Res., \rm 109 (2002), 229-264.

\bibitem{WWZh-20}
G. Wang, Y. Wang and S. Zhang, \it An asymmetric information mean-field type linear-quadratic stochastic Stackelberg differential game with one leader and two followers, \sl Optimal Control Appl. Methods, \rm 41 (2020), no. 4, 1034-1051.

\bibitem{WYYY-17}
Y. Wang, D. Yang, J. Yong and Z. Yu, \it Exact controllability of linear stochastic differential equations and related problems, \sl Math. Control Relat. Fields, \rm 7 (2017), no. 2, 305-345.


\bibitem{W-15}
N. Williams, \it A solvable continuous time dynamic principal-agent model, \sl J. Econom. Theory, \rm 159 (2015), part B, 989-1015.

\bibitem{Yo-13}
J. Yong, \it Linear-quadratic optimal control problems for mean-field stochastic differential equations, \sl SIAM J. Control Optim., \rm 51 (2013), no. 4, 2809-2838.

\bibitem{Y-21}
Z. Yu, \it Controllability Gramian and Kalman rank condition for mean-field control systems, \sl ESAIM Control Optim. Calc. Var., \rm 27 (2021), Paper No. 30, 28 pp.

\bibitem{ZhG-19AC}
R. Zhang and L. Guo, \it Controllability of Nash equilibrium in game-based control systems, \sl IEEE Trans. Automat. Control, \rm 64 (2019), no. 10, 4180-4187.

\bibitem{ZhG-19SICON}
R. Zhang and L. Guo, \it Controllability of stochastic game-based control systems, \sl SIAM J. Control Optim., \rm 57 (2019), no. 6, 3799-3826.

\bibitem{ZhG-21SICON}
R. Zhang and L. Guo, \it Stabilizability of game-based control systems, \sl SIAM J. Control Optim., \rm 59 (2021), no. 5, 3999-4023.

\bibitem{ZhM-13}
M. Zhu and S. Mart\'inez, \it Distributed coverage games for energy-aware mobile sensor networks, \sl SIAM J. Control Optim., \rm 51 (2013), no. 1, 1-27.

\end{thebibliography}
\end{document}